\patchcmd\Gread@eps{\@inputcheck#1 }{\@inputcheck"#1"\relax}{}{}
\theoremstyle{plain}\newtheorem{theorem}{Theorem}[section]\newtheorem{Theorem}{Theorem}\newtheorem{proposition}[theorem]{Proposition}\newtheorem{lemma}[theorem]{Lemma}
\theoremstyle{definition}\newtheorem{remark}[theorem]{Remark}
\def\C{\mathbb{C}}\def\N{\mathbb{N}}\def\Z{\mathbb{Z}}\def\R{\mathbb{R}}\def\ZZ2{\mathbb{\Z/ 2\Z}}
\def\sb{\subset}\def\lb{\langle}\def\rb{\rangle}\def\ot{\otimes}\def\t{\times}\def\sm{\setminus}
\def\c{\gamma}\def\v{\varphi}\def\a{\alpha}\def\b{\beta}\def\d{\delta}\def\e{\epsilon}\def\s{\sigma}\def\De{\Delta}\def\La{\Lambda}\def\la{\lambda}\def\la{\lambda}\def\p{\partial}
\def\ov{\overline}\def\wt{\widetilde}
\def\ad{\text{ad}}
\def\sl2{\mathfrak{sl}_2}
\def\su2{\mathfrak{su}(2)}
\def\Aut{\text{Aut}\,}
\def\id{\text{id}}\def\Ker{\text{Ker}\,}
\def\deg{\mathrm{deg}}
\def\mf{\mathfrak}
\def\Uq{U_q(\mf{sl}_2)}\def\Uqq{\overline{U}_q(\sl2)}\def\Usl2{U_q(\sl2)}\def\usl2{\wt{U}_q(\sl2)}\def\Uq{U_q(\mf{sl}_2)}\def\uq{\mathfrak{u}_q(\sl2)}
\def\Rep{\text{Rep}}
\def\tt{\theta}
\def\kk{\mathbb{K}}\def\Svect{\text{SVect}_{\kk}}
\def\Aut{\mathrm{Aut}}
\def\ZHrho{Z^{\rho}_{\uDH}}\def\rhoc{\rho\ot \tt}
\def\CC{\mathcal{C}}
\def\uDH{\underline{D(H)}}
\def\ZZ{\mathcal{Z}}
\def\ZHrhoc{Z_{\underline{D(H')}}^{\rhoc}}
\def\CC{\mathcal{C}}
\def\ovi{I}\def\ovj{J}\def\ovn{N}
\def\PHrhoc{P_{H}^{\rhoc}}
\def\uH{\underline{H}}\def\Ha{H_{\a}} 
\def\Hc{H_{\c}}\def\Hb{H_{\b}}
\def\DHa{D(H)_{\a}}\def\DHab{D(H)_{\a\b}}\def\DHb{D(H)_{\b}}\def\DHc{D(H)_{\c}}
\def\vara{\varphi_{\a}}\def\ad{\text{ad}}\def\uDH{\underline{D(H)}}
\def\Ha{H_{\a}}
\def\uA{\underline{A}}
\def\tt{\theta}\def\FF{\mathbb{F}}
\def\Aa{D_{\a}}\def\Ab{D_{\b}}
\def\uA{\underline{A}}
\def\ZArho{Z_{\uDH}^{\rho}}
\def\PHrhoc{P_H^{\rhoc}}
\def\TT{\mathcal{T}}
\def\Auto{\mathrm{Aut}^0}
\def\ggg{\mathfrak{g}}
\def\gg{\boldsymbol{g}}
\begin{document}

\title[Genus bounds for twisted quantum invariants]{Genus bounds for twisted quantum invariants}
\author{Daniel L\'opez Neumann and Roland van der Veen}
\email{dlopezne@indiana.edu, r.i.van.der.veen@rug.nl}

\maketitle

\begin{abstract}
By twisted quantum invariants we mean polynomial invariants of knots in the three-sphere endowed with a representation of the fundamental group into the automorphism group of a Hopf algebra $H$. These are obtained by the Reshetikhin-Turaev construction extended to the $\Aut(H)$-twisted Drinfeld double of $H$, provided $H$ is finite dimensional and $\N^m$-graded.
\medskip

We show that the degree of these polynomials is bounded above by $2g(K)\cdot d(H)$ where $g(K)$ is the Seifert genus of a knot $K$ and $d(H)$ is the top degree of the Hopf algebra. When $H$ is an exterior algebra, our theorem recovers Friedl and Kim's genus bounds for twisted Alexander polynomials. When $H$ is the Borel part of restricted quantum $\sl2$ at an even root of unity, we show that our invariant is the ADO invariant, therefore giving new genus bounds for these invariants.

\end{abstract}


\section[Introduction]{Introduction}






\def\ss{\mathfrak{s}}




\def\sl{\mathfrak{sl}}
\subsection{Background} Quantum invariants, as developed by Reshetikhin and Turaev \cite{RT1, RT2}, are topological invariants of knots and 3-manifolds built from the representation theory of quantum groups, or more generally, the theory of braided monoidal categories. The Jones polynomial is the special case when the quantum group is that associated to the Lie algebra $\mathfrak{sl}(2,\C)$. When the braided categories involved are semisimple, or more precisely modular, these invariants can be nicely packaged into what is called a 3-dimensional topological quantum field theory (TQFT), a mathematical notion that formalizes the various cut-and-paste properties of quantum invariants \cite{Turaev:BOOK2}. As the name suggests, these objects originate from physics, namely from Witten's interpretation of the Jones polynomial in terms of quantum field theory \cite{Witten:quantum}.

\medskip
When the parameter $q$ of the quantum group $U_q(\mathfrak{g})$ is a root of unity, the corresponding representation category becomes non-semisimple and one can find continuous families of simple modules, such as Verma modules with complex highest weights. When $\ggg=\sl_2$ this gives rise to a polynomial invariant, called the ADO invariant, after Akutsu-Deguchi-Ohtsuki \cite{ADO}. These ``non-semisimple quantum invariants" have been much less studied than their semisimple counterparts though, partially because extending these to 3-manifolds and eventually TQFTs is much more complicated and for more than 20 years they lacked a clear physical interpretation. Some of these hurdles have been overcome in the work of Geer-Patureau-Mirand and collaborators \cite{GPT:modified, CGP:non-semisimple}, leading to TQFTs for ADO invariants with interesting topological features \cite{BCGP}. Moreover, connections between non-semisimple invariants and the physics of vertex operator algebras and logarithmic conformal field theory have recently been found \cite{Gukov:Coulomb,CDGG:QFT}.
\medskip

However, even though quantum invariants have nice cut-and-paste (TQFT) properties and physically interesting interpretations, their topological content remains mysterious. For instance, the Jones polynomial (as well as HOMFLY and Kauffman polynomials) is not clearly related to the Seifert genus of a knot, fibredness or sliceness. This is in sharp contrast to twisted Alexander polynomials (or equivalently, twisted Reidemeister torsion), which are related to all these topological properties \cite{FV:survey} and even to hyperbolic geometry \cite{Porti:survey}. Relations between colored Jones polynomials and hyperbolic geometry or $SL(2,\C)$-character varieties are still major open conjectures in the field. 

\medskip

In order to find which aspects of the theory of braided monoidal categories capture interesting topology of knot complements, a first natural question is: which ``structure" behind the Alexander polynomial and Reidemeister torsion is making it capture so much topological information? The answer is very simple: these are invariants of a covering space of the knot complement $X_K=S^3\sm K$, or in other words, invariants of a pair $(K,\rho)$ where $\rho:\pi_1(X_K)\to G$ is a homomorphism into a group $G$. For instance, the Alexander polynomial comes from the covering space corresponding to the projection $\pi_1(X_K)\to H_1(X_K)=\Z$. Most applications depend on these invariants being polynomials, for which one needs $\Z\sb G$, in particular, $G$ is an infinite group. 
\medskip

It turns out that there is an extension of the theory of quantum invariants, due to Turaev \cite{Turaev:homotopy}, that produces invariants of such pairs $(K,\rho)$. This extension starts with a $G$-crossed braided monoidal category, essentially a $G$-graded category with a $G$-action and a compatible braiding, and produces invariants of tangles endowed with a representation $\rho:\pi_1(X_T)\to G$, where $X_T=\R^2\t[0,1]\sm T$. The $G$-action condition turns out to pose a severe restriction: a semisimple monoidal category has only finitely many tensor autoequivalences up to isomorphism \cite{ENO:fusion}. In light of this, we believe it is natural to consider non-semisimple $G$-braided monoidal categories where $G$ is some infinite group. 

\medskip
In previous work, the first author \cite{LN:TDD} considered a special class of such categories: relative Drinfeld centers of crossed products $\Rep(H)\ltimes \Aut(H)$ where $H$ is a finite dimensional Hopf algebra and $\Aut(H)$ its group of Hopf algebra automorphisms, or equivalently, $\Aut(H)$-twisted Drinfeld doubles \footnote{From now on we refer to this simply as a twisted Drinfeld double as in \cite{Virelizier:Graded-QG}. Note that the term ``twisted Drinfeld double" has another common meaning in the literature.} as defined by Virelizier \cite{Virelizier:Graded-QG}. The reason to use such categories is that the twisted quantum invariants they define can be seen as ``Fox calculus deformations" of the invariants coming from the usual Drinfeld double $D(H)$ so, in some sense, they do retain some covering space theory. Moreover, it was shown in \cite{LN:TDD, LN:twisted} that the $SL(n,\C)$-twisted Reidemeister torsion of knot complements is obtained from the twisted Drinfeld double of an exterior algebra. Thus, one may expect that these quantum invariants contain topological information generalizing that of torsion.
\medskip

In this work, we show that the twisted quantum invariants of \cite{LN:TDD} do indeed contain easily readable topological information about knot complements, namely, lower bounds to the Seifert genus. Moreover, we show that these invariants contain the ADO invariants, therefore giving genus bounds for these.
\medskip

\subsection{Main theorem}

To state our main theorem, we briefly recall the construction of \cite{LN:TDD}. Let $H$ be a finite dimensional Hopf algebra over a field $\kk$ such that the Drinfeld double $D(H)$ is ribbon. The twisted Drinfeld double $\uDH$ of $H$ is a family, indexed by $\a\in\Aut(H)$, of deformations $D(H)_{\a}$ of the usual Drinfeld double of $H$ \cite{Virelizier:Graded-QG}.
If $T$ is a framed, oriented, $n$-component open tangle, the twisted Drinfeld double leads to a deformation $$\ZHrho(T)\in (H^*\ot H)^{\ot n}$$ of the usual universal invariant of tangles $Z_{D(H)}(T)$ (see e.g. \cite{Habiro:bottom-tangles}) that depends on a homomorphism $\rho:\pi_1(X_T)\to \Aut(H)$. 
This invariant is essentially a special case of Turaev's construction \cite{Turaev:homotopy}. Now, if in addition $H$ is $\N^m$-graded, there is a canonical automorphism $\tt\in\Aut(H')$, where $H'=H\ot_{\kk}\FF$ and $\FF=\kk[t_1^{\pm 1/2},\dots,t_m^{\pm 1/2}]$, defined by $\tt(x)=t_1^{k_1}\dots t_m^{k_m}x$ for $x\in H$ homogeneous of degree $(k_1,\dots,k_m)$. Then any $\rho$ as above extends to $\rhoc:\pi_1(X_T)\to\Aut(H')$. This ``degree twisted" extension generalizes a similar procedure in twisted Reidemeister torsion theory (e.g. \cite{FV:survey}). We thus get an invariant $$\ZHrhoc(T)\in (H'^*\ot_{\FF} H')^{\ot n}$$ defined for any $\rho:\pi_1(X_T)\to\Auto_{\N^m}(H)$ (automorphisms preserving the degree and fixing the left cointegral of $H$). If $K_o$ is a framed, oriented, $(1,1)$-tangle whose closure is a knot $K$, then we define $$P_H^{\rhoc}(K)=\la\cdot\e_{D(H')}(\ZHrhoc(K_o))\in\kk[t_1^{\pm 1},\dots,t_m^{\pm 1}]$$ where $\la\in\FF^{\t}$ is a normalization factor making this an invariant of the unframed $(K,\rho)$. If $\rho\equiv 1$, $P_H^{\tt}(K)$ is a polynomial invariant of the oriented knot $K$ with no additional structure, but one still has to think that there is a canonical abelian representation $H_1(X_K)\to \Aut(H')$ sending the oriented meridian to $\tt$.

\medskip

If $\PHrhoc(K)=\sum_{\ov{k}} a_{\ov{k}}t_1^{k_1}\dots t_m^{k_m}$ where $a_{\ov{k}}\in \kk$ and $\ov{k}=(k_1,\dots,k_m)\in\Z^m$, we define $\deg \ \PHrhoc(K)$ as $\max\{k_1+\dots+k_m \ | \ a_{\ov{k}}\neq 0\}-\min\{k_1+\dots+k_m \ | \ a_{\ov{k}}\neq 0\}$.
We denote $d(H)=\max\{i_1+\dots+i_m \ | \ H_{(i_1,\dots,i_m)}\neq 0\}$ where $H_{(i_1,\dots,i_m)}$ is the component of degree $(i_1,\dots,i_m)\in\N^m$ of $H$.

\begin{Theorem} {\bf(Genus bound)}
\label{Thm: main thm}
Let $H$ be a $\N^m$-graded Hopf algebra of finite dimension. Let $K\sb S^3$ be a knot and $\rho:\pi_1(S^3\sm K)\to\Auto_{\N^m}(H)$ a homomorphism. Then
\begin{align*}
\deg \ \PHrhoc(K)\leq 2 g(K)\cdot d(H)
\end{align*}
where $g(K)$ is the Seifert genus of $K$.
\end{Theorem}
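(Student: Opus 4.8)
The plan is to derive the bound from a minimal-genus Seifert surface, mirroring the way Friedl--Kim's bound for twisted Alexander polynomials follows from the Seifert-form/HNN presentation of the knot exterior. Let $\Sigma$ be a Seifert surface for $K$ of genus $g=g(K)$. The essential topological input is that $\Sigma$ deformation retracts onto a spine of first Betti number $2g$; equivalently, $\Sigma$ is isotopic to a disk with $2g$ bands $b_1,\dots,b_{2g}$ attached. First I would use this band presentation to produce a $(1,1)$-tangle diagram $K_o$, whose closure is $K$, that is adapted to $\Sigma$: the knot runs along the boundary of the disk and along the two sides of each band, so that the diagram splits into a fixed ``disk part'' together with $2g$ ``band parts,'' and so that every strand crosses a meridian disk of $K$ the minimal number of times dictated by $\Sigma$.

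Next I would compute $\ZHrhoc(K_o)$ on this adapted diagram and track precisely where the degree twist $\theta$ inserts powers of the $t_i$. Since $\rho\otimes\theta$ sends the oriented meridian to $\rho(\mu)\cdot\theta$, a $t$-power is produced exactly when a Hopf-algebra-decorated strand winds around the meridian, i.e.\ crosses $\Sigma$, and an element homogeneous of degree $(k_1,\dots,k_m)$ crossing once contributes $t_1^{k_1}\cdots t_m^{k_m}$. The key structural claim I would establish is that, on the adapted diagram, $\ZHrhoc(K_o)$ is a tensor contraction, over the disk part, of $2g$ tensors---one per band---in which the only $\theta$-twisted factor coming from band $b_i$ is a single homogeneous element $x_i\in H$ of total $\N^m$-degree at most $d(H)$. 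Each band then raises the total degree $k_1+\dots+k_m$ by at most $d(H)$, while the disk part and all the $R$-matrix and (co)integral contractions living between levels of the surface carry no net $t$-power. Summing over the $2g$ bands bounds the spread of the total degree by $2g\cdot d(H)$, which is the assertion; the genus-$1$ case is already visible in the trefoil, where $H=\Lambda(\kk)$ has $d(H)=1$ and the Alexander polynomial $t-1+t^{-1}$ has spread $2=2\cdot 1\cdot 1$.

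The main obstacle will be the second step: making rigorous the claim that the degree twist contributes at most $d(H)$ per band and nothing elsewhere. Concretely I must show that after expanding all $R$-matrices and cointegrals the surviving $t$-dependence sits only on the $2g$ band strands, and that no extra $t$-powers leak in from the meridional closure or from the mutual interaction of the band strands (which encodes the Seifert linking form). This amounts to showing that $\ZHrhoc$ factors through the surface in the same way the twisted Reidemeister torsion chain complex is supported on the $2g$ surface generators, and I would verify it by a careful bookkeeping of the $\N^m$-grading through the crossing and cap/cup morphisms, using that $\theta$ commutes with the degree-preserving $\rho$ and acts as multiplication by $t^{\deg}$. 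As consistency checks I would specialize to $H=\Lambda(V)$, where the band tensors become the blocks of the (twisted) Seifert matrix and the bound reduces to Friedl--Kim's, and to the Borel part of restricted quantum $\sl_2$, recovering the stated ADO genus bound.
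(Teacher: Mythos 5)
Your global strategy coincides with the paper's: present a minimal Seifert surface as a disk with $2g$ bands, realize $K_o$ as the boundary of the thickened band tangle, and argue that each band contributes at most $d(H)$ to the spread of the $t$-degree. The consistency checks and the identification of the main difficulty are also on target. However, the step you flag as the main obstacle is genuinely the whole proof, and the form in which you assert it --- that after expanding everything ``the only $\theta$-twisted factor coming from band $b_i$ is a single homogeneous element $x_i\in H$ of total degree at most $d(H)$'', with the $t$-powers sitting only on the band strands --- is not what actually happens, and I do not see how your bookkeeping would close up as stated. Two mechanisms are missing. First, the reason the core band tangle $T$ carries no $t$-powers at all is a cancellation: the two boundary copies of each band are oppositely oriented, so their meridians map to $\a_i\ot\tt$ and $\b_i\ot\tt^{-1}$, and $\rhoc$ restricted to $\pi_1(X_T)$ equals $\rho$. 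You assert the conclusion (``no net $t$-power from the disk part and the interactions'') but not this cancellation, which is what makes it true. Second, the $t$-powers are not localized on the bands: they are generated by the doubling map and then again at every multiplication, because the twisted coproduct $\De_{\a\ot\tt,\b\ot\tt^{-1}}$ produces factors $t^{-|a_{(2)}|}$ while leaving a bead $a_{(1)}$ whose $H$-degree is later converted into further $t$-powers by the twisted product (which itself emits $t^{-|a_{(3)}|}$), and by the $t^{-|\La_l|/2}$ in the pivot on right caps and cups (a global factor you would also need to normalize away).

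The paper closes this gap with a filtration on $D(H')_{\a\ot\tt^n}=(H^*\ot H)[t^{\pm 1/2}]$ that weights the residual $\N^m$-degree of the $H$-component \emph{together with} the negative $t$-exponent, namely $D'[k]=\bigoplus_{d(I)+d(J)\le k}(H^*\ot H_I)t^{-J}$, and proves that the twisted multiplication and the composite $(\id\ot \wt S)\circ\De$ (neither $\De$ nor $S$ separately!) preserve it; positivity of the $\N^m$-grading is used exactly at the inequality $d(a_{(1)})+d(a_{(2)})\le d(a)$. Since each band's tensor factor of $Z^{\rho}_{\uDH}(T)$ lies in filtration degree $\le d(H)$, the whole contraction lies in degree $\le 2g\,d(H)$. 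Your proposal needs an analogue of this lemma (or an equivalent degree-conservation argument through every coproduct, antipode and product in the contraction) before the per-band accounting becomes a proof rather than a plausible picture.
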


In fact the proof shows that a similar bound holds for $\ZHrhoc(K_o)$.
As mentioned previously, the motivation for this theorem comes from the Fox calculus formulas of \cite{LN:twisted, LN:TDD}. The actual proof given here makes no mention to Fox calculus, it is instead based on an argument of Habiro \cite{Habiro:bottom-tangles} but generalized to include the representation $\rhoc$, which is essential for our theorem. We illustrate the theorem directly for (even) twist knots, which have genus one, in Subsection \ref{subs: twist knots}. Note that this theorem is in contrast with the Jones polynomial whose degree coincides with the crossing number for twist knots (since they are alternating), hence the degrees are not bounded. 
\medskip

We now list various special cases of our theorem.

\subsection{Twisted Reidemeister torsion} When $H=\La(\C^n)$ is an exterior algebra (with $\C^n$ concentrated in degree one), one has $\Auto_{\N}(H)\cong SL(n,\C)$, $d(H)=n$ and $P_{\La(\C^n)}^{\rhoc}(K)$ coincides with the $SL(n,\C)$-twisted relative Reidemeister torsion $\tau^{\rhoc}(X_K,\mu)$ where $\mu\sb \p X_K$ is a meridian \cite{LN:TDD}. This torsion is essentially equivalent to the twisted Alexander polynomial $\De_K^{\rho}(t)$ of Lin and Wada \cite{Lin:representations, Wada:twisted}. The above theorem implies $$\deg \ \tau^{\rhoc}(X_K,\mu)\leq 2g(K)n.$$
Since $\tau^{\rhoc}(X_K,\mu)=\det(t\rho(\mu)-I_n)\tau^{\rhoc}(X_K)$ this is equivalent to $\deg \ \tau^{\rhoc}(X_K)\leq n(2g(K)-1)$ which is a special case of a result of Friedl and Kim \cite{FK:Thurston}.
\medskip

\def\qq{\zeta_p}

\subsection{ADO invariants} Now let $H_p$ be the Borel part of the restricted quantum group $\overline{U}_{\qq}(\sl_2)$ at a $2p$-th root of unity $\qq$. Then $H_p$ has finite dimension, it is $\N$-graded (with $d(H_p)=p-1$) and $D(H_p)$ is ribbon, so we have a polynomial invariant $P_{H_p}^{\tt}(K)\in \Z[\qq^2][t^{\pm 1}].$ On the other hand, the unrolled restricted quantum group $\overline{U}^H_{\qq}(\sl_2)$ leads to the ADO invariant $ADO_p(K,t)\in\Z[\qq^2][t^{\pm 1}]$ \cite{ADO}. We use the normalization of the ADO invariant in which $ADO_2(K,t)=\De_K(t)$.

\begin{Theorem}
\label{Thm: ADO}
We have $$P_{H_p}^{\tt}(K)=ADO_p(K,t)$$
hence $$\deg \ ADO_p(K,t)\leq 2g(K)(p-1).$$
\end{Theorem}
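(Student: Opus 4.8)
The statement splits into an identity, $P_{H_p}^{\tt}(K)=ADO_p(K,t)$, together with a degree bound that is immediate once the identity is known: applying Theorem~\ref{Thm: main thm} to $H=H_p$ and using $d(H_p)=p-1$ yields $\deg\ ADO_p(K,t)\le 2g(K)(p-1)$. So the whole content is the identity, and my plan is to prove it by matching the two invariants at the level of the ribbon structures that define them, rather than by comparing closed-form formulas.

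First I would make the ribbon Hopf algebra $D(H_p)$ explicit. Writing $H_p=\langle E,K\rangle$ for the positive Borel (with $E^p=0$, $K^{2p}=1$, $KEK^{-1}=\qq^2E$), its dual is a negative Borel $\langle F,L\rangle$, and the Drinfeld double glues them into a doubled-Cartan version of $\overline{U}_{\qq}(\sl_2)$. Concretely I expect a Hopf algebra surjection $D(H_p)\twoheadrightarrow\overline{U}_{\qq}(\sl_2)$ identifying $L$ with $K^{-1}$, under which the canonical $R$-matrix and ribbon element of the double are carried to the standard ones of $\overline{U}_{\qq}(\sl_2)$. Recording this identification reduces the comparison of universal invariants to a comparison of the representation-theoretic data underlying the ADO construction.

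Next I would track the degree automorphism $\tt$ through the double and argue that it performs the \emph{unrolling}. Since the $\N$-grading of $H_p$ is by $E$-degree, $\tt$ scales $E^k\mapsto t^kE^k$ and dually scales $F$ while fixing the group-likes, so in $Z^{\tt}_{\underline{D(H_p')}}(K_o)$ the variable $t$ records the net weight transported along the strand --- exactly the bookkeeping that $\overline{U}^H_{\qq}(\sl_2)$ performs with its Cartan generator $H$. The key point is that the degree twist replaces the discrete group-like $K$ (with $K^{2p}=1$) by a continuous weight variable, via $t\leftrightarrow\qq^{2\a}$, so that the typical Verma-type module $V_{\a}$ used to define $ADO_p$ appears. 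Because $\e_{D(H_p')}$ annihilates $E$ and $F$ and sends group-likes to $1$, applying it after the $t$-twist selects the highest-weight line of $V_{\a}$ and reads off the scalar by which $K_o$ acts; this is the ADO invariant before normalization.

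The remaining and, I expect, hardest step is to reconcile the two closure conventions. The ADO invariant of the knot is obtained by closing the $(1,1)$-tangle with a \emph{modified trace} on the non-semisimple category of $\overline{U}^H_{\qq}(\sl_2)$-modules \cite{GPT:modified}, whereas $P_{H_p}^{\tt}$ closes it by $\la\cdot\e_{D(H_p')}$. I would show these agree by identifying the normalization factor $\la$, built from the cointegral of $H_p$, with the modified-trace data of $V_{\a}$: in the universal-invariant formalism the (co)integral plays precisely the role of the modified trace, so $\la\cdot\e_{D(H_p')}$ should reproduce the modified-trace closure. The overall normalization (including the framing correction absorbed into $\la$) can then be pinned down by the base case $p=2$, where $ADO_2(K,t)=\De_K(t)$ is the Alexander polynomial. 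I anticipate the technical heart to be the equality ``cointegral-normalized counit $=$ modified trace'': ordinary quantum traces vanish in this non-semisimple setting, so the whole argument depends on showing that the cointegral appearing in $\la$ is exactly what replaces the vanishing trace, matching the chosen normalization of $ADO_p$.
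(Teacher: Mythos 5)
Your overall strategy for the core identification is the same as the paper's: use the degree twist $t\leftrightarrow \qq^{2\a}$ to turn $D(H_p)_\tt$ into an unrolled-type quantum group, let the universal invariant act on the Verma module $V_\a$, and observe that $\e_{D(H_p')}$ reads off the scalar by which the $(1,1)$-tangle acts on the highest-weight vector $v_0$ (since $E^l v_0=0$ for $l>0$, $\e(E)=\e(F)=0$, and $\e(k^{i-j})=\qq^{\a(i-j)/2}$ agrees with the eigenvalue of $k^{i-j}$ on $v_0$). However, there are two genuine problems. First, the step you flag as the ``technical heart'' --- identifying the cointegral-normalized counit with a modified trace --- is a misdirection: the normalization of $ADO_p$ used in this theorem (the one with $ADO_2(K,t)=\De_K(t)$, following Murakami) is precisely the scalar by which the cut-open knot operator acts on $V_\a$, \emph{not} the modified-trace closure of \cite{GPT:modified}; the latter is a renormalization that is no longer a polynomial. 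So no equality ``cointegral $=$ modified trace'' is needed or available here, and the factor $\la=t^{w(K_o)|\La_l|/2}$ is only a framing correction that must be matched against the explicit eigenvalue $\qq^{\a^2-\a(p-1)}$ of a positive twist on $V_\a$; checking the case $p=2$ does not pin this down for all $p$.

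Second, and more substantively, your proposal skips the step that actually carries the weight of the proof: the coproduct and $R$-matrix of the twisted double do \emph{not} coincide with the standard unrolled ones after the substitution $t=\qq^{2\a}$. One gets $\De_{\a,\b}(E)=E\ot K^2+\qq^{-\a}\,1\ot E$ and an $R$-matrix off by Cartan factors, and the paper must (i) pass to the crossed product $A_\a=\C[(\C,+)]\ltimes D_\a$ and kill $\v_{1/2}k^{-1}$, (ii) apply the Drinfeld twist $J_{\a,\b}=1\ot k^{\a}$ to restore the standard coproduct and bring the $R$-matrix to the form $\mathcal{H}_{\a,\b}\cdot\mathcal{R}^J$, (iii) prove that this twist does not change the universal invariant of a knot, and (iv) compute that $\mathcal{H}_{\a,\b}$ acts on $V_\a\ot V_\b$ as $\qq^{\a\b/2}\qq^{H\ot H/2}$, recovering Murakami's Yang--Baxter operator. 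Without (i)--(iv) the two $R$-matrices genuinely differ and the claimed matching of ribbon structures does not go through; with them, your final counit argument is exactly the paper's concluding computation.
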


As an example, the $ADO_4$ invariants of the first $6$ twist knots (denoted by their usual names in knot tables) are given as follows:
\medskip

\begin{center}
\begin{tabular}{ c|c }
 \hline
 $K$ & $ADO_{\sl_2,4}(K,t)$  \\ 
 \hline
 $3_1$ &  $(1 - 2 i) + t^{-3} + it^{-2} - (1 + i)t^{-1} + (1 + i) t + i t^2 - t^3$  \\
 $4_1$ &$7 + it^{-3} - 3t^{-2} - 6 it^{-1} + 6 i t - 3 t^2 - i t^3$  \\
 $5_2$ & $(-5 + 10 i) - (2 + 2 i)t^{-3} + (3 - 5 i)t^{-2} + (
 8 + 4 i)t^{-1} - (8 + 4 i) t + (3 - 5 i) t^2 + (2 + 2 i) t^3$     \\
$6_1$&  $(7 + 10 i) - (2 - 2 i)t^{-3} - (3 + 5 i)t^{-2} + (
 8 - 4 i)t^{-1} - (8 - 4 i) t - (3 + 5 i) t^2 + (2 - 2 i) t^3$    \\
 $7_2$ & $(-5 + 4 i) - (2 + i)t^{-3} + (3 - 2 i)t^{-2} + (
 2 + 4 i)t^{-1} - (2 + 4 i) t + (3 - 2 i) t^2 + (2 + i) t^3$  \\ 
 $8_1$ & $(13 + 2 i) - (1 - 2 i)t^{-3} - (6 + i)t^{-2} + (
 1 - 11 i)t^{-1} - (1 - 11 i) t - (6 + i) t^2 + (1 - 2 i) t^3$  \\ 
 \hline
\end{tabular}
\end{center}
\medskip

These all have degree $\leq 6$, since $g(K)=1$ for these knots, this matches with our theorem. Formulas for ADO invariants of double twist knots (genus 1) and torus knots $T_{(2,2k+1)}$ (genus $k$) are given in \cite{BH:Habiro}. All these satisfy the genus bound above.

\def\gg{\mathfrak{g}}
\medskip

\subsection{The higher rank case} More generally, for a simple Lie algebra $\ggg$ of rank $m$ and a primitive root of unity $\qq^2$ of order $p$ (say, $p$ coprime to the determinant of the Cartan matrix), our construction provides a polynomial invariant $P_{\ggg,p}(K)\in \Z[\qq][t_1^{\pm 1},\dots,t_m^{\pm 1}]$ that gives a lower bound to the genus. For instance $$\deg \ P_{\sl_{N+1},p}(K)\leq 2g(K)(p-1)\frac{1}{6}N(N+1)(N+2),$$
which generalizes the above bound for ADO to all $N$. The invariant $P_{\ggg,p}$ is obtained by letting $H$ be the Borel part of the corresponding restricted quantum group (which is $\N^m$-graded) in our construction. We expect that this invariant recovers other ``ADO-like" invariants defined in the literature, e.g. \cite{Harper:sl3-invariant}, \cite{Bishler:overview}, thus implying genus bounds for all of them. For instance, the $\sl_3$-ADO invariant at $p=2$ (so $\qq=i$) defined in \cite{Harper:sl3-invariant} is given as follows for the above twist knots: 
\medskip

\begin{center}
\begin{tabular}{ c|c }
 \hline
 $K$ & $ADO_{\sl_3,2}(K,x,y)$  \\ 
 \hline
 $3_1$ & $1 + x^{-2} - 2x^{-1} - 2 x + x^2 + y^{-2} + x^{-2} y^{-2} - 
 x^{-1} y^{-2} - 2y^{-1} - x^{-2} y^{-1} $\\
 & $+ 2x^{-1} y^{-1} + xy^{-1} - 2 y + yx^{-1} + 2 x y - 
 x^2 y + y^2 - x y^2 + x^2 y^2 $  \\
 \hline
 $4_1$ & $25 + x^{-2} - 12x^{-1} - 12 x + x^2 + y^{-2} + x^{-2} y^{-2} - 3
 x^{-1} y^{-2} - 12y^{-1} - 3x^{-2} y^{-1} $\\
& $ + 12x^{-1} y^{-1} + 3 xy^{-1} - 12 y + 3 yx^{-1} + 
 12 x y - 3 x^2 y + y^2 - 3 x y^2 + x^2 y^2$ \\
 \hline
 $5_2$ & $37 + 6x^{-2} - 26x^{-1} - 26 x + 6 x^2 + 6y^{-2} + 6x^{-2} y^{-2} - 10
 x^{-1} y^{-2} - 26y^{-1}  - 10x^{-2} y^{-1}$\\
& $ + 26x^{-1} y^{-1} + 10 xy^{-1} - 26 y + 10 yx^{-1} + 
 26 x y - 10 x^2 y + 6 y^2 - 10 x y^2 + 6 x^2 y^2  $ \\
 \hline
$6_1$& $  85 + 6x^{-2} - 46x^{-1} - 46 x + 6 x^2 + 6y^{-2} + 6x^{-2} y^{-2} - 14
 x^{-1} y^{-2} - 46y^{-1} - 14x^{-2} y^{-1}$\\
 & $ + 46x^{-1} y^{-1} + 14 xy^{-1} - 46 y + 14 yx^{-1} + 
 46 x y - 14 x^2 y + 6 y^2 - 14 x y^2 + 6 x^2 y^2 $ \\
 \hline
 $7_2$ &$ 97 + 13x^{-2} - 64x^{-1} - 64 x + 13 x^2 + 13y^{-2} + 13x^{-2} y^{-2} - 23
 x^{-1} y^{-2} - 64y^{-1} - 23x^{-2} y^{-1}$\\
& $  + 64x^{-1} y^{-1} + 23 xy^{-1} - 64 y + 23 yx^{-1} + 
 64 x y - 23 x^2 y + 13 y^2 - 23 x y^2 + 13 x^2 y^2 $ \\ 
 \hline
\end{tabular}
\end{center}
\medskip

Our genus bound implies $\deg \ P_{\sl_3,2}(K)\leq 8g(K)$, which is satisfied by the above polynomials. This will be studied in a separate paper.


\def\TT{\Theta}

\subsection{Related results} In \cite{Ohtsuki:2loop}, Ohtsuki proved a genus bound for the 2-loop expansion of the Kontsevich integral, which is a 2-variable knot polynomial $\TT_K(x,y)$. No genus bound seems to be known for higher-loop expansions. The colored Jones polynomial is a specialization of the Kontsevich integral, so Ohtsuki's theorem implies a genus bound for the 2-loop part of this expansion as well. However, this is a limited bound and no further results seem to be known for colored Jones polynomials. Note that Ohtsuki's invariant satisfies $\TT_{\ov{K}}(x,y)=-\TT_{K}(x,y)$ so it vanishes on amphichiral knots, on the other hand, ADO invariants do not vanish on amphichiral knots. Now, the family of colored Jones polynomials turns out to be equivalent to the family of $ADO_p$ polynomials (for varying $p$) by a theorem of Willetts \cite{Willets:unification}. Thus, our theorem shows that non-semisimple quantum knot invariants contain more transparent topological information than their semisimple counterparts. 
\medskip


\medskip
In the case of $\sl_2$, Bar-Natan and the second author studied the $n$-loop polynomial using $h$-adic Hopf algebra techniques \cite{BNV:perturbed}. This gives another proof of Ohtsuki's genus bound. This approach has the advantage of being polynomial-time computable but a genus bound was only given in the 2-loop case. Note that Seifert genus is in NP and co-NP \cite{Lackenby:Thurston}.



\medskip

Finally, we note that there already exist topological invariants detecting the Seifert genus. On the one hand, a theorem of Friedl and Vidussi \cite{FV:Thurston} states that there is always some $n$ and a representation $\rho:\pi_1(X_K)\to U(n)$ for which $\deg \ \tau^{\rhoc}(X_K,\mu)=2g(K)n$. The authors do not know whether there is an algorithm to find such a $\rho$ though. On the other hand, knot Floer homology, which categorifies the Alexander polynomial, detects the Seifert genus \cite{OS:genus}. It is an interesting question whether one can actually detect the Seifert genus of a knot using Hopf algebra/representation theoretic techniques, e.g. by keeping $\rho\equiv 1$ and varying $H$ in our theorem.


\medskip


\subsection{Plan of the paper} We begin in Section \ref{sect: Hopf algs} with some Hopf algebra preliminaries. Here we recall some definitions from Hopf $G$-coalgebras, we define twisted Drinfeld doubles and study ribbon elements in these. In Section \ref{sect: quantum G-tangles} we define the invariant $\ZHrho(T)$, the polynomial invariant $\PHrhoc(K)$, and we prove some properties of these invariants. We illustrate our main theorem with the case of twist knots in Subsection \ref{subs: twist knots}. In Section \ref{sect: PROOF} we prove our genus bound (Theorem \ref{Thm: main thm}). Finally, in Section \ref{sect: ADO}, we study the case where the Hopf algebra is the $H_p$ above and prove Theorem \ref{Thm: ADO}.

\def\gg{\boldsymbol{g}}\def\zz{\boldsymbol{\zeta}}\def\rH{r_H}

\section{Hopf algebra preliminaries}\label{sect: Hopf algs}


\def\Svect{\text{Svect}}



For simplicity, we work over an algebraically closed field $\mathbb{K}$ of characteristic $0$. In all that follows, $G$ denotes a group. Hopf algebras will be assumed finite dimensional unless otherwise stated.

\subsection{Hopf algebras} For basic definitions on Hopf algebras, see e.g. \cite{Radford:BOOK}. We denote multiplication, coproduct, unit, counit and antipode of a Hopf algebra $H$ over $\kk$ by $m,\De,1,\e,S$ respectively. We will employ Sweedler's notation for the coproduct of a Hopf algebra, that is, $$\De(x)=x_{(1)}\ot x_{(2)}, (\De\ot\id)\De(x)=x_{(1)}\ot x_{(3)}\ot x_{(3)},$$ etc. The dual $H^*$ is also a Hopf algebra with $$\lb p\cdot q, x\rb=\lb p ,x_{(1)}\rb \lb q, x_{(2)}\rb$$ for each $p,q\in H^*$ and $x\in H$. Here $\lb \ , \ \rb$ denotes the usual pairing and $\lb p,x\rb =\lb x, p\rb$ for any $p\in H^*, x\in H$. We denote by $\Aut(H)$ the group of Hopf algebra automorphisms of $H$ and $\Auto(H)$ the subgroup of $\Aut(H)$ fixing a non-zero cointegral (either left or right, see Subsection \ref{subs: ribbon elements} below for definitions).
\medskip

A Hopf algebra $H$ is $\N^m$-graded if $H=\bigoplus_{\ovi\in\N^m} H_{\ovi}$ with $m(H_{\ovi}\ot H_{\ovj})\sb H_{\ovi+\ovj}$, $\De(H_{\ovn})\sb \sum_{\ovi+\ovj=\ovn}H_{\ovi}\ot H_{\ovj}$ and $S(H_I)\sb H_I$ for each $I,J,N\in\N^m$. We denote by $\Auto_{\N^m}(H)$ the subgroup of automorphisms of $\Auto(H)$ preserving the $\N^m$-degree.

\subsection{Hopf $G$-coalgebras} 

 A {\em Hopf $G$-coalgebra} is a family $\uH=\{\Ha\}_{\a\in G}$ where each $\Ha$ is an algebra with unit $1_{\a}$ together with a family of algebra morphisms $\De_{\a_1,\a_2}:H_{\a_1\a_2}\to H_{\a_1}\ot H_{\a_2}$ for each $\a_1,\a_2\in G$, an algebra morphism $\e:H_1\to\kk$ and algebra antiautomorphisms $S_{\a}:\Ha\to H_{\a^{-1}}$ for each $\a$ satisfying graded versions of the Hopf algebra axioms, see \cite{Turaev:homotopy, Virelizier:Hopfgroup} for more details. Note that $H=H_1$ is a Hopf algebra in the usual sense (with the coproduct $\De_{1,1}$ and antipode $S_1$).
\medskip


A Hopf $G$-coalgebra is said to be {\em crossed} if it comes with a family $\v=\{\v_{\a,\b}\}_{\a,\b\in G}$ of algebra isomorphisms $\v_{\a,\b}:H_{\a}\to H_{\b\a\b^{-1}}$, simply denoted $\v_{\b}$, preserving the Hopf structure and satisfying $\v_{\b_2}\v_{\b_1}=\v_{\b_2\b_1}$.
\medskip

A crossed Hopf $G$-coalgebra $(\uH,\v)$ is {\em quasi-triangular} if it comes with a family $R=\{R_{\a,\b}\}_{\a,\b\in G}$ of invertible elements in $\Ha\ot\Hb$, satisfying 

\begin{enumerate}
\item $R_{\a,\b}$ is $\v$-invariant, that is, $(\v_{\c}\ot\v_{\c})(R_{\a,\b})=R_{\c\a\c^{-1},\c\b\c^{-1}}$,
    \item $R_{\a,\b}\cdot\De_{\a,\b}(x)=(\tau[(\v_{\a^{-1}}\ot \id_{\Ha})\De_{\a\b\a^{-1},\a}(x)])\cdot R_{\a,\b}$ where $\tau$ denotes permutation of two factors (with a sign in the super-case),
    \item $(\id_{\Ha}\ot\De_{\b,\c})(R_{\a,\b\c})=(R_{\a,\c})_{1\b 3}\cdot (R_{\a,\b})_{12\c}$,
\item $(\De_{\a,\b}\ot\id_{\Hc})(R_{\a\b,\c})=((\id_{\Ha}\ot\v_{\b^{-1}})(R_{\a,\b\c\b^{-1}}))_{1\b 3}\cdot (R_{\b,\c})_{\a 23}$
\end{enumerate}
 for each $\a,\b,\c\in G$, where we used the notation $X_{1 2\c}=x\ot y\ot 1_{\c}$ for any $X=\sum x\ot y\in \Ha\ot\Hb$ and similarly for $Y_{\a 23}, Z_{1\b 3}$ ($Y\in\Hb\ot\Hc, Z\in \Ha\ot\Hc$). If $(\uH,\v,R)$ is quasi-triangular, the graded Drinfeld element is $u=\{u_{\a}\}_{\a\in G}$ defined by $$u_{\a}=m_{\a}(S_{\a^{-1}}\v_{\a}\ot\id_{\Ha})\tau_{\a,\a^{-1}}(R_{\a,\a^{-1}})\in\Ha.$$

A quasi-triangular Hopf $G$-coalgebra $(\uH,\v,R)$ is {\em $G$-ribbon} if it comes with a family $v=\{v_{\a}\}_{\a\in G}$ of invertible elements $v_{\a}\in H_{\a}$ satisfying, for each $\a,\b\in G$:
\begin{enumerate}
    \item $\De_{\a,\b}(v_{\a\b})=(v_{\a}\ot v_{\b})(\tau_{\b,\a}[\v_{\a^{-1}}\ot\id(R_{\a\b\a^{-1},\a})])R_{\a,\b}$,
    \item $S_{\a}(v_{\a})=v_{\a^{-1}}$,
    \item $\v_{\a}(x)=v_{\a}^{-1}xv_{\a}$ for all $x\in \Ha$,
    \item $\v_{\b}(v_{\a})=v_{\b\a\b^{-1}}$.
\end{enumerate}

If $\gg_{\a}=v_{\a}u_{\a}$, we call $\gg=\{\gg_{\a}\}_{\a\in G}$ the $G$-pivot of $\uH$, this satisfies $S_{\a^{-1}}S_{\a}(x)=\gg_{\a}x\gg_{\a}^{-1}$ for each $x\in \Ha,\a\in G$ and is group-like in the sense that $\De_{\a,\b}(\gg_{\a\b})=\gg_{\a}\ot \gg_{\b}$.
\medskip


\subsection{Twisted Drinfeld doubles}\label{subs: TDDs} We now define the twisted Drinfeld double of a Hopf algebra $H$, which is a quasi-triangular Hopf $\Aut(H)$-coalgebra extending the Drinfeld double construction. This was introduced by Virelizier in \cite{Virelizier:Graded-QG}, here we follow the conventions of \cite{LN:TDD} (see remark \ref{remark: rel Drinfeld center} below). 
\medskip

For each $\a\in\Aut(H)$ we define an algebra $\DHa$ as follows: $\DHa=H^*\ot H$ as a vector space and we define the product by $$(p\ot a)\cdot_{\a} (q\ot b)=\lb a_{(1)}, q_{(3)}\rb \lb S^{-1}\a^{-1}(a_{(3)}),q_{(1)}\rb p\cdot q_{(2)}\ot a_{(2)}\cdot b$$
where $p,q\in H^*, a,b\in H$. 
This is an associative algebra with unit $1_{\a}=\e_H\ot 1_H$. For each $\a,\b\in\Aut(H)$ we define a coproduct $\De_{\a,\b}:\DHab\to \DHa\ot\DHb$ by $$\De_{\a,\b}(p\ot a)=(p_{(2)}\ot a_{(1)})\ot (p_{(1)}\ot \a^{-1}(a_{(2)})).$$

\noindent For each $\a\in\Aut(H)$ we define an antipode $S_{\a}:D(H)_{\a}\to D(H)_{\a^{-1}}$ by $$S_{\a}(p\ot a)=(\e\ot\a^{-1}(S(a)))\cdot_{\a^{-1}}(p\circ S^{-1}\ot 1).$$
\noindent For each $\a,\b$ define an algebra isomorphism $\vara:D(H)_{\b}\to D(H)_{\a\b\a^{-1}}$ by $$\vara(p\ot a)=p\circ\a^{-1}\ot \a(a).$$
Finally, for each $\a,\b$ let $$R_{\a,\b}=\sum (\e\ot\a(h_i))\ot (h^i\ot 1)\in D(H)_{\a}\ot D(H)_{\b}$$
where $(h_i)$ is any basis of $H$ and $(h^i)$ is the dual basis. With all these structure maps, $\{D(H)_{\a}\}_{\a\in\Aut(H)}$ has the structure of a quasi-triangular crossed Hopf $\Aut(H)$-coalgebra. We denote $\uDH=\{\DHa\}_{\a\in\Aut(H)}$ and call it the {\em twisted Drinfeld double} of $H$. If $\phi:G\to\Aut(H)$ is a group homomorphism, then $\uDH|_G=\{D(H)_{\phi({\a})}\}_{\a\in G}$ is a quasi-triangular Hopf $G$-coalgebra in an obvious way, we call it a $G$-twisted Drinfeld double.
 \medskip
 
 \begin{remark}
 \label{remark: rel Drinfeld center}
 The category of modules of the twisted Drinfeld double is braided $\Aut(H)$-crossed equivalent to the relative Drinfeld center $\mathcal{Z}_{rel}(\Rep(H)\rtimes \Aut(H))$ of \cite{GNN:centers}, where $\Aut(H)$ acts on $\Rep(H)$ by $V\to T_{\a}(V)$ for any $V\in\Rep(H)$, where $T_{\a}(V)$ is $V$ as a vector space with $H$-action $h\cdot v:=\a^{-1}(h)v$, see \cite{LN:TDD}. 
 \end{remark}

\subsection{Ribbon elements in the double}\label{subs: ribbon elements} We will now establish under which conditions the twisted Drinfeld double has a graded ribbon element. For this, we briefly recall what happens for the usual Drinfeld double, for more details see \cite{Radford:BOOK}.
\medskip

\def\bold{\boldsymbol}
\def\bbb{\bold{\b}}
\def\bbeta{\bold{\b}}
\def\bb{\bold{b}}

Recall first that a {\em left cointegral} of a Hopf algebra $H$ is an element $\La_l\in H$ such that $x\La_l=\e(x)\La_l$ for each $x\in H$. Dually, a {\em right integral} is $\la_r\in H^*$ such that $\la_r(x_{(1)})x_{(2)}=\la_r(x)1_H$ for each $x\in H$. As a consequence of uniqueness of integrals/cointegrals, there are unique group-likes $\bold{\a}\in G(H^*),\bold{a}\in G(H)$ satisfying $$x_{(1)}\la_r(x_{(2)})=\la_r(x)\bold{a}, \hspace{1cm} \La_lx=\bold{\a}(x)\La_l$$
for all $x\in H$. These are the {\em distinguished group-likes} of $H$.

\medskip

Now, a theorem of Kauffman and Radford states that the Drinfeld double $D(H)$ is ribbon if and only if there are group-likes $\bb\in G(H),\bbb\in G(H^*)$ such that $\bb^2=\bold{a}, \bbb^2=\bold{\a}$ and $$S^2=\ad_{\bbb^{-1}}\circ\ad_{\bb}.$$
In such a case, the ribbon element is $v=u^{-1}(\bbeta\ot\bb)$ where $u$ is the Drinfeld element of $D(H)$.

\medskip

We can now give sufficient conditions for $\uDH$ to be ribbon (or rather, a restriction to a certain subgroup $G\sb \Aut(H)$). Let's suppose that $H$ satisfies the above condition making $D(H)$ ribbon. Let $G\sb\Aut(H)$ be a subgroup fixing such $\bb,\bbb$. Let $\rH:\Aut(H)\to\kk^{\t}$ be the homomorphism characterized by $\a(\La_l)=\rH(\a)\La_l$ (this is defined by uniqueness of cointegrals). Then, as shown in \cite[Prop. 2.4]{LN:TDD}, $\uDH|_G$ is $G$-ribbon if and only if the homomorphism $\rH|_G:G\to\kk^{\t}$ has a square root. In such a case, the $G$-ribbon element is given by $$v_{\a}=\sqrt{\rH(\a)}^{-1}(\id_{H^*}\ot\a)(v)$$
for any $\a\in G$, where $v$ is the ribbon element of $D(H)$. The $G$-pivot is given by $$\gg_{\a}=\sqrt{\rH(\a)}^{-1}(\bbeta\ot\bb).$$ 
The square root condition is immediately satisfied if $G=\Ker(r_H)=\Auto(H)$.

\def\DHa{D(H)_{\a}}\def\DHb{D(H)_{\b}}\def\DHab{D(H)_{\a\b}}\def\DHc{D(H)_{\c}}\def\DHd{D(H)_{\d}}
\def\DH{D(H)}

\subsection{Trivializing the crossing}\label{subs: ss-product Aa's} Suppose that $G$ is an abelian group with an homomorphism $\phi:G\to\Aut(H)$ and consider $\uDH|_G$ as defined above. Then $G$ acts on each $\DHa$ by $\b\mapsto \v_{\phi(\b)}|_{\DHa}$ and setting $A_{\a}=\kk[G]\ltimes \DHa$ we get another Hopf $G$-coalgebra $\uA=\{A_{\a}\}_{\a\in G}$. The Hopf structure is extended to $\uA$ by declaring the elements of $G$ to be group-likes. In what follows we denote $\b\in G$ by $\v_{\b}$ when considered as an element of $A_{\a}$ so that $\v_{\b}x=\v_{\phi(\b)}(x)\v_{\b}$ in $A_{\a}$ for each $x\in\DHa$. It is easy to see that $$R_{\a,\b}^A=(1\ot\v_{\a})R_{\a,\b}\in A_{\a}\ot A_{\b}$$ is an $R$-matrix for $\uA$ in (almost) the usual sense, that is, it satisfies $$R^A_{\a,\b}\cdot\De^A_{\a,\b}(x)=\De^{A,op}_{\b,\a}(x)\cdot R^A_{\a,\b}.$$ In other words, the category $\CC=\coprod_{\a\in G}\Rep(A_{\a})$ is a braided category in the usual sense. 
It is not difficult to see that the Drinfeld element of $\uA$ is determined by $u^A_{\a}=u_{\a}\v_{\a}^{-1}$ where $u=\{u_{\a}\}$ is the Drinfeld element of $\uDH$. If $v=\{v_{\a}\}$ is a ribbon element in $\uDH$, then $\uA$ is ribbon with $v^A_{\a}=v_{\a}\v_{\a}$. It follows that the pivot of $\uA$ is the same as in $\uDH$.

\medskip

\subsection{The super algebra case} Recall that a super vector space is a vector space $V$ with a mod 2 grading, that is, a decomposition $V=V_0\oplus V_1$. The category of super vector spaces is symmetric with $\tau_{V,W}(v\ot w)=(-1)^{|v||w|}w\ot v$
where $|v|,|w|$ denotes the mod 2 degree of homogeneous elements $v\in V, w\in W$. A super Hopf algebra is a Hopf algebra $(H,m,1,\De,\e,S)$ in the category of super vector spaces. This amounts to the same axioms as for a Hopf algebra, except that the coproduct $\De$ satisfies
$$\De\circ m=(m\ot m)\circ(\id\ot\tau_{H,H}\ot\id)\circ(\De\ot\De).$$
More generally, we can talk about super Hopf $G$-coalgebras, now it is $\De_{\a,\b}$ that satisfies the above property (with $\tau_{\Hb,\Ha}$ in place of $\tau_{H,H}$). In the super case, $\Aut(H)$ denotes the automorphisms preserving the mod 2 degree. The twisted Drinfeld double of a super Hopf algebra $H$ is defined as before, but with some additional signs: 
\begin{align*}
    (p\ot a)\cdot_{\a} (q\ot b)&=(-1)^{|a_{(1)}|+|q_{(1)}|+|a_{(2)}||q_{(2)}|+|a_{(1)}||q_{(2)}|+|a_{(2)}||q_{(3)}|}\\
    &\lb a_{(1)}, q_{(3)}\rb \lb S^{-1}\a^{-1}(a_{(3)}),q_{(1)}\rb p\cdot q_{(2)}\ot a_{(2)}\cdot b.
\end{align*}

In this formula, $\lb \ , \ \rb$ represents the usual vector space pairing, but note that the sign $(-1)^{|a_{(1)}|+|q_{(1)}|}$ comes from the fact that the right evaluation/coevaluation of the category of super vector spaces is not the same as that of vector spaces.
\medskip

\section{Twisted quantum invariants of $G$-tangles}
\label{sect: quantum G-tangles}


\def\DD{D(H)}
\def\DDp{D(H')}
\def\Aa{\DD_{\a}}
\def\Ab{\DD_{\b}}
\def\DDP{\DDp}

During this section, we let $H$ be a finite dimensional Hopf algebra. We suppose $D(H)$ is ribbon with corresponding pivotal element $\bbeta\ot\bb$ and we let $G\sb\Aut(H)$ be such that $\uDH|_G$ is $G$-ribbon. 
We let $(h_i)$ be a basis of $H$ and $(h^i)$ be the dual basis of $H^*$.

\begin{figure}[htp!]
    \centering
    \includegraphics[width=6cm]{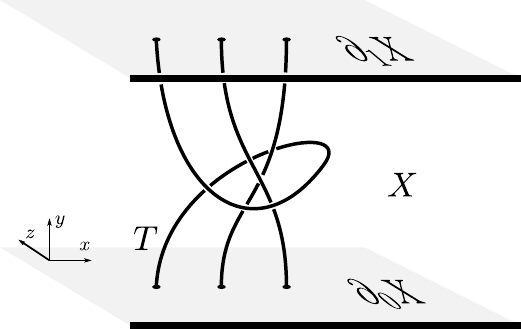}
    \caption{A three strand tangle $T$ in the block $X$.}
    \label{fig:Tangle}
\end{figure}

\subsection{$G$-tangles} Let $X=\R\t [0,1]\t [-1,\infty)$ and let $p$ be a basepoint in $\R\t[0,1]\t \{-1\}$. We think of $\R\t[0,1]\t \{0\}$ as being on the plane of the page, so the $z$-axis is transversal to the plane and the negative $z$-axis is towards the reader. We also denote $\p_i X=\R\t\{i\}\t[-1,\infty)$ for $i=0,1$. By a $G$-tangle we mean a framed, oriented tangle $T$ in $\R\t[0,1]\t(0,\infty)$ endowed with a representation $\rho:\pi_1(X_T,z)\to G$ where $X_T=X\sm T$. Two $G$-tangles $(T,\rho)$ and $(T',\rho')$ are isotopic if there is a basepoint-preserving isotopy $d_t:X\to X$, that is, $d_0=\id_X,\ d_t|_{\p X}=\id,\ d_1(T)=T'$ as framed oriented tangles, and $\rho'\circ(d_1)_*=\rho$. Here $(d_1)_*:\pi_1(X_T,p)\to \pi_1(X_{T'},p)$ is the induced map. By Van Kampen's theorem, if $(T,\rho)$ and $(T',\rho')$ are $G$-tangles satisfying that $\rho\circ j_1|_{\pi_1(\p_1X\cap X_T)}=\rho'\circ j_0|_{\pi_0(\p_0X\cap X_{T'})}$ (where $j_i$ is the homomorphism induced by the corresponding inclusion) then we can stack $(T',\rho')$ on top of $(T,\rho)$ and rescale to define a new $G$-tangle, called the composition of $(T',\rho')$ and $(T,\rho)$. Similarly, we can stack a $G$-tangle to the right of another. With these operations, $G$-tangles form (the morphisms of) a monoidal category, and in fact, a $G$-crossed ribbon category \cite{Turaev:homotopy}.
\medskip


\subsection{Twisted universal invariants of $G$-tangles}

\medskip
Let $(T,\rho)$ be a $G$-tangle with $n$ open components (and no closed components). 
We suppose the components of $T$ are ordered, say $T_1,\dots,T_n$. We define now the invariant $\ZHrho(T)\in \DD_{\a_1}\ot\dots\ot \DD_{\a_n}$
where each $\a_i=\rho(\mu_i)$ and $\mu_i$ is a meridian at the endpoint of $T_i$ as defined below. As usual, this invariant is defined from a planar diagram and then shown to be independent of it. 
\medskip

Let $D$ be an oriented planar diagram of $T$ where at all crossings both strands are oriented upwards. We assume $D$ comes with the blackboard framing.
As usual in drawing a knot diagram we cut the projection close to each crossing to indicate where the knot passes under another strand. The resulting connected components in the plane are called the edges (arcs) of the diagram.
\medskip

We define first meridians and ``partial longitudes" associated to the edges of a diagram. For each edge $e$ of $D$, there is an element $\mu_e\in \pi_1(X_T,p)$ defined as the homotopy class of the loop that starts at the basepoint $p$, goes to the given edge via a linear path, encircles that edge once with linking number -1 and finally goes back to the basepoint by the same linear path. Then $\rho(\mu_e)\in G$, so that all edges of the diagram are labelled by elements of $G$:
\begin{figure}[H]
   \includegraphics[width=11cm]{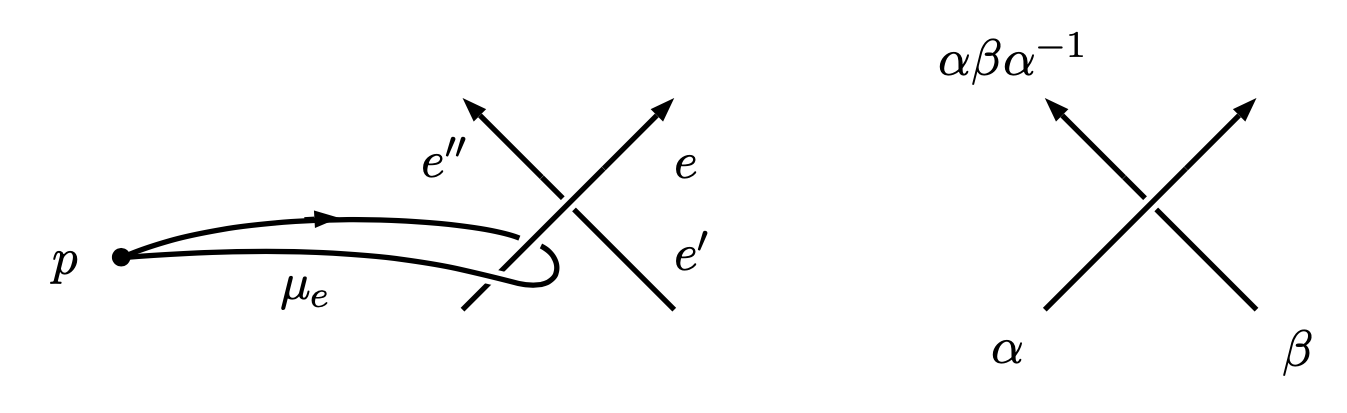}
\caption{On the right $\a=\rho(\mu_e)$ and $\b=\rho(\mu_{e'})$.}
\label{fig: crossing with labels}
\end{figure}

 \noindent For each $i=1,\dots, n$ we let $\mu_i=\mu_{e_i}$ where $e_i$ is the edge containing the endpoint of $T_i$ and we let $\a_i=\rho(\mu_i)\in G$. Now let $c_i$ be the core of $T_i$, that is, $T_i\cong c_i\t [0,1]$ as a framed tangle and let $c'_i\sb X_T$ be a close oriented normal to $c_i$ (this uses the framing of $T_i$). Then, for each edge $e\sb T_i$ of the diagram $D$ of $T$, we let $l_e\in\pi_1(X_T)$ be the ``partial longitude" defined as the homotopy class of a path that goes linearly from $p$ to the endpoint of $c'_i$, follows $c'_i$ with the opposite orientation until it reaches $e$, and finally goes back to the basepoint $p$ by a linear path:
 
 \begin{figure}[H]
     \centering
    \includegraphics[width=5cm]{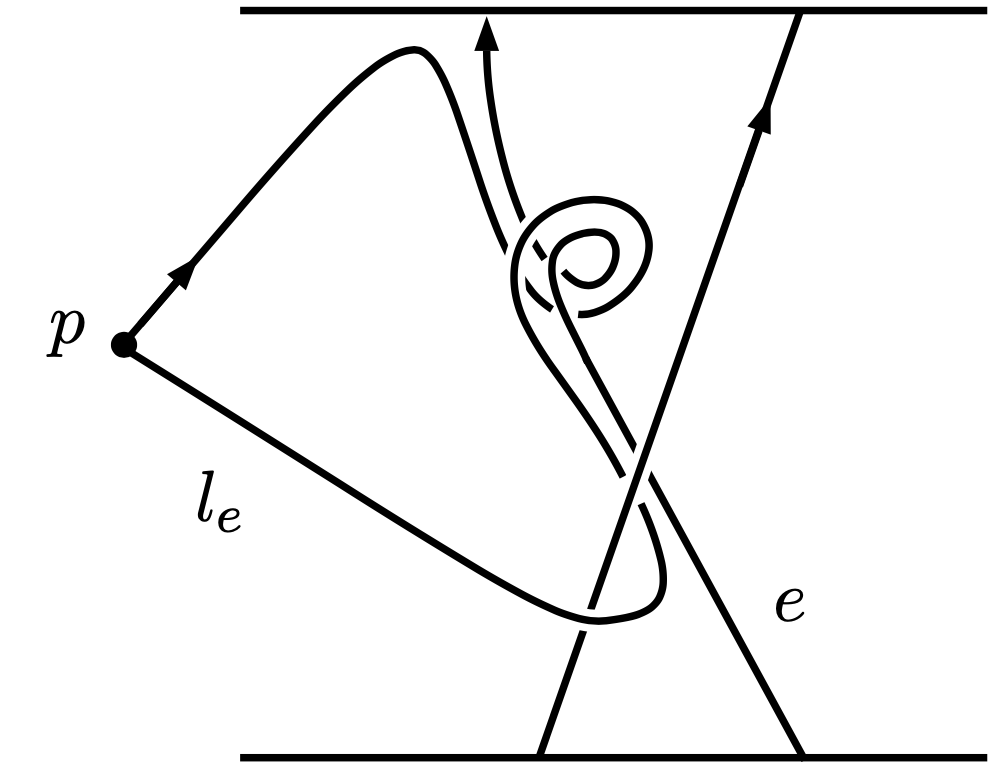}
\caption{The partial longitude $l_e$ of the edge $e$.}
\label{fig: partial longitude}
 \end{figure}

\noindent Note that for each edge $e\sb T_i$ of $D$ we have that $l_e\mu_e l_e^{-1}=\mu_{i}$. For a component $T_i$ of $T$ we denote $[T_i]=l_{e'_i}$ where $e'_i$ is the edge of $T_i$ containing the starting point of $T_i$. Note that both $(\a_1,\dots,\a_n)$ and $(\rho([T_1]),\dots,\rho([T_n]))$ are independent of the diagram $D$ of $T$.
\medskip

 We now bring in the twisted Drinfeld double. To each positive crossing of $D$ whose bottom edges are labelled by $\a,\b$ (as in Figure \ref{fig: crossing with labels} above), we associate the $R$-matrix $R_{\a,\b}\in \Aa\ot\Ab$ and the crossing isomorphism $\v_{\a}:\DD_{\b}\to \DD_{\a\b\a^{-1}}$. We represent the two factors of the $R$-matrix in the diagram by two black beads placed at the crossing, the first tensor factor on the overpass and the second on the underpass, while $\v_{\a}$ is represented by a white bead: 

\begin{figure}[H]
 \includegraphics[width=7cm]{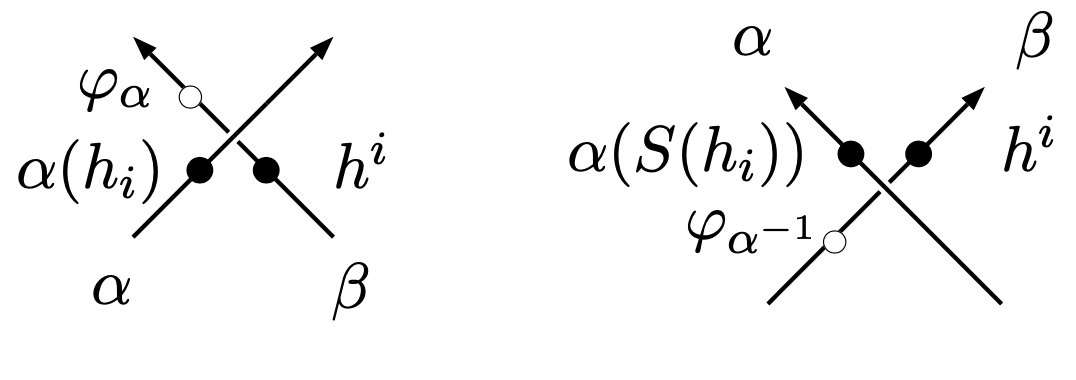}
\end{figure}

If the crossing is negative and $\a,\b$ are the labels of the edges at the top, we assign the inverse $R$-matrix $R_{\a,\b}^{-1}$ and the crossing isomorphism $\v_{\a^{-1}}$. These are again represented by two black and one white beads as above. To caps and cups labelled by $\a$ we associate the unit or the $G$-pivot as follows:

\begin{figure}[H]
  \includegraphics[width=9cm]{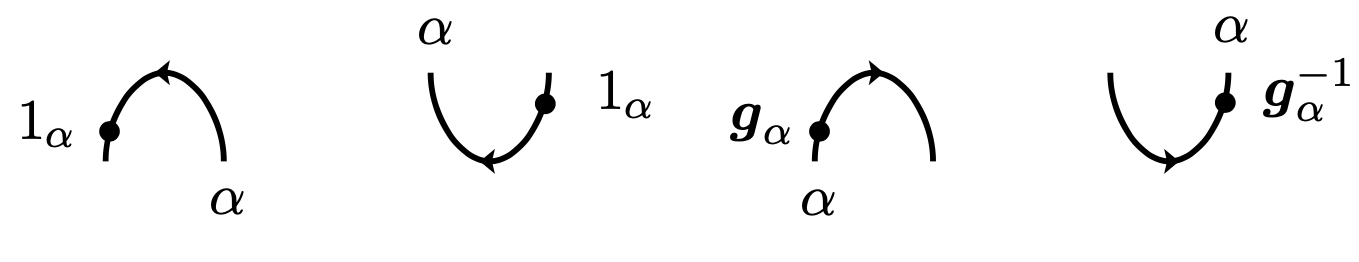}
\end{figure}

\noindent where $\gg_{\a}=\sqrt{\rH(\a)}^{-1}\bbeta\ot\bb$. With these conventions, the black beads lying over a given edge $e$ of the diagram $D$ belong to $\DD_{\rho(\mu_e)}$. Hence, we can multiply all these black beads (from right to left) as we follow the orientation along $e$. Thus, we get a single black bead $x_e\in \DD_{\rho(\mu_e)}$ for each edge of the diagram. We now follow the orientation of a component $T_i$ of $T$ and we multiply all these black beads as follows: if an edge $e''$ follows an edge $e'$ and the label of the overpass separating these is $\a$, then there is a white bead $\v_{\a^{\e}}$ in between the black beads of $e'$ and $e''$ (where $\e=\pm 1$ is the sign of the crossing). We evaluate $\v_{\a^{\e}}$ on the bead $x_{e'}\in \DD_{\rho(\mu_{e'})}$  and then we multiply this with the bead $x_{e''}\in \DD_{\rho(\mu_{e''})}$, this results in $x_{e''}\v_{\a^{\e}}(x_{e'})\in \DD_{\rho(\mu_{e''})}$. In other words, we slide the black beads following the orientation of the diagram, and whenever a black bead crosses a white bead, we evaluate the corresponding crossing isomorphism on the black bead. Note that the product of all the white beads on a given component of $T$ lying after an edge $e$ is equal to $\v_{\rho(l_e)}$. Thus, we could equally evaluate $\v_{\rho(l_e)}(x_e)$ for each edge $e$, which belongs to $\DD_{\a_i}$ for each $e\sb T_i$, and then multiply all the resulting beads on a given component (from right to left as we follow the orientation of $T_i$). 
If we do this for all components of $T$ we get an element $$\ZArho(D)\in \DD_{\a_1}\ot\dots\ot \DD_{\a_n}.$$
This turns out to be independent of the diagram $D$ chosen, so it is a topological invariant of $(T,\rho)$, denoted $\ZHrho(T)$. This is the invariant of $G$-tangles defined in \cite{LN:TDD}, we call it a {\em twisted universal quantum invariant}.
\medskip

More precisely, and to be careful with the signs in the super-case, we define the invariant $\ZArho(T)$ as follows: let $S$ be the set consisting of the overpasses and underpasses of the diagram together with the right caps and cups. Suppose $D$ has $N$ crossings and $k$ right caps or cups, so $|S|=2N+k$. Order the set of crossings of the diagram $D$ arbitrarily, say $c_1,\dots,c_N$. Order the set of right caps/cups arbitrarily too. This determines a total order of $S$ where the overpass of each crossing comes before the underpass and all caps/cups come after the crossings. We denote $S_1$ the set $S$ with this order. Let $\a_i,\b_i$ be the labels of the bottom edges (resp. top edges) at $c_i$ if the crossing is positive (resp. negative). Let $\c_i$ be the label at the $i$-th right cap/cup for $i=1,\dots,k$ and let $\e_i$ be the corresponding power of $g_{\c_i}$. For each $x\in S$, let $\DD_x$ be $\DD_{\a_i}$, $\DD_{\b_i}$ or $\DD_{\c_i}$ depending on whether $x$ is an overpass, an underpass or a cap/cup. Now, following the orientation of each component of the diagram, along with the order of the components of $T$, defines another total order on $S$, denote it $S_2$. In other words, if $d_i$ is the number of elements of $S$ corresponding to $T_i$, then the first $d_1$ elements of $S_1$ are those of $T_1$ (ordered from right to left as we follow the orientation of $T_1$), the next $d_2$ elements are those of $T_2$ ordered in the same way, and so forth. This gives a permutation isomorphism $P:\otimes_{x\in S_1}\DD_x\to \otimes_{x\in S_2}\DD_x$. If $\uDH$ is a Hopf group-coalgebra in super-vector spaces, this means that signs are being introduced. Now, if $$R_D=\left(\bigotimes_{j=1}^N R_{\a_j,\b_j}\right)\ot\left(\bigotimes_{i=1}^kg_{\c_i}^{\e_i}\right)$$ (note that this belongs to $\otimes_{x\in S_1}D_x$) then $$\ZArho(T)=\left(\bigotimes_{i=1}^n m_{\a_i}^{(d_i)}\right)\circ P\circ \left(\bigotimes_{x\in S_1}\v_{\rho(l_x)}\right)(R_D),$$
where $m_{\a_i}^{(k)}:\DD_{\a_i}^{\ot k}\to \DD_{\a_i}$ denotes iterated multiplication. Here we denote $l_x=l_{e_x}$ where $e_x$ is the edge of $D$ containing $x$ and $l_{e_x}$ is the partial longitude defined above. Note that the order chosen on the set of crossings and in the set of caps/cups is irrelevant in the super-case, since both the $R$-matrices and the pivots have degree zero.

\medskip
\begin{remark}
Our motivation to define $\ZHrho(T)$ for super Hopf algebras is that the $SL(n,\C)$-twisted Reidemeister torsion is the special case when $H$ is an exterior algebra $\La(\C^n)$ \cite{LN:TDD}, which is a super Hopf algebra.
\end{remark}

\subsection{The abelian case} Suppose $G$ is an abelian group with an homomorphism $\phi:G\to\Aut(H)$. Note that, given a diagram of a $G$-tangle $(T=T_1\cup\dots\cup T_n,\rho)$, all edges of $T_i$ will be labelled by the same $\a_i\in G$. Consider $\uDH|_G$ and the Hopf $G$-coalgebra $\uA$ defined in Subsection \ref{subs: ss-product Aa's}. Then we can define a universal invariant $Z^{\rho}_{\uA}(T)\in A_{\a_1}\ot\dots\ot A_{\a_n}$ almost in the ``usual sense": we use the $R$-matrix $R^A_{\a,\b}$ on crossings (with labels $\a,\b$ as before) and the pivot of $\uA$ (which is the same as that of $\uDH$) on caps/cups and we run the above procedure. Every bead over a component $T_i$ of a tangle $T$ belongs to the same algebra $A_{\rho([T_i])}$, so we can multiply all of these directly. Using the semidirect product relation $\a\cdot x=\v_{\a}(x)\cdot \a,\ \a\in G,\ x\in D(H)$ and that $R^A_{\a,\b}=\sum h_i\ot (h^i\cdot\v_{\a})$ we can write this invariant as 
\begin{align}
\label{eq: ZA fom ZDH}
    Z^{\rho}_{\uA}(T)=\ZHrho(T)\cdot (\v_{\rho([T_1])}\ot\dots \ot\v_{\rho([T_n])}).
\end{align}
This is because the product of all white beads over $T_i$ equals $\v_{\rho([T_i])}$.

\subsection{The $\N^m$-graded case}\label{subs: Zm graded case} In addition to the previous conditions on $H$, suppose that $H$ is $\N^m$-graded and let $G= \Auto_{\N^m}(H)$.
\medskip

Let $\FF=\kk[t_1^{\pm 1/2},\dots,t_m^{\pm 1/2}]$ and $H'=H\ot_{\kk} \FF$. 
For every $\a\in\Aut_{\N^m}(H)$ and $n\in\Z$ we define an $\FF$-linear Hopf automorphism of $H'$, denoted $\a\ot\tt^n\in\Aut(H')$, by $$\a\ot\tt^n(x)=t_1^{n|x|_1}\dots t_m^{n|x|_m}\a(x)$$ where $(|x|_1,\dots,|x|_m)$ is the $\N^m$-degree of $x\in H$. It is easy to see that the map $\Aut_{\N^m}(H)\t\Z\to\Aut(H'),\ (\a,n)\mapsto \a\ot \tt^n$ is a homomorphism. In what follows we use the shorthand $\tt=\id\ot\tt^1 \in\Aut(H')$. Let $G'\sb \Aut_{\N^m}(H')$ be the subgroup of automorphisms of the form $\a\ot \tt^n$ where $\a\in G,\ n\in\Z$. Since $r_H(\a)=1$, it is easy to see that $r_{H'}(\a\ot\tt^n)=t_1^{n|\La_l|_1}\dots t_m^{n|\La_l|_m}$ (where $r_{H'}$ is as in Subsection \ref{subs: ribbon elements}) which has a square root $$\sqrt{r_{H'}(\a\ot\tt^n)}=t_1^{n|\La_l|_1/2}\dots t_m^{n|\La_l|_m/2},$$ hence $D(H')|_{G'}$ is $G'$-ribbon by \cite[Prop. 2.4]{LN:TDD}. The pivot element of $D(H')_{\a\ot\tt}$ is $$\gg_{\a\ot\tt}=t_1^{-|\La_l|_1/2}\dots t_m^{-|\La_l|_m/2}\bbeta\ot\bb.$$ 
\medskip

\def\rhoc{\rho\ot\tt}
Now let $T$ be a tangle as in the previous section and let $\rho:\pi_1(X_T)\to G$ be an homomorphism. Then $\rho$ extends to a homomorphism $\rhoc:\pi_1(X_T)\to G'\sb\Aut(H')$ defined by
\begin{align*}
    \d\mapsto \rho(\d)\ot\tt^{h(\d)}
\end{align*}
for $\d\in\pi_1(X_T)$,
where $h:\pi_1(X_T)\to H_1(X_T)\to\Z$ is the homology representation. Note that this depends on the orientation of the components of $T$. When $\rho\equiv 1$ this is simply the abelian representation sending the canonical generators of $H_1(X_T)$ to $\tt$. Then the above construction leads to an invariant $$\ZHrhoc(T)\in D(H')_{\a_1\ot\tt}\ot\dots\ot D(H')_{\a_n\ot\tt}.$$ 
In what follows, we identify $D(H')_{\a\ot\tt}$ with $(H^*\ot H)[t_1^{\pm 1/2},\dots, t_m^{\pm 1/2}]$ in the obvious way.
\medskip

\def\uDHp{\underline{D(H')}}
\subsection{Knot polynomials} Suppose $H$ is $\N^m$-graded as above. If $(K_o,\rho)$ is a framed, oriented, $(1,1)$-$G$-tangle whose $G$-closure is a knot $K$, then $$\e_{\DDP}(\ZHrhoc(K_o))\in\FF$$
depends on $\rho$ only up to conjugation hence it is an invariant of $(K,\rho)$, see \cite{LN:TDD}. Here $\e_{\DDP}:\DDP_{\a\ot\tt}\to\FF$ is given by $\e(p\ot x)=p(1_H)\e_H(x)$ for each $p\in H^*,\ x\in H$. The map $\e_{\DDP}$ is not an algebra morphism over $\DDP_{\a\ot\tt}$ if $\a\neq\id$ so there is no reason for the above evaluation to be trivial. Note that $\ZHrhoc(K_o)$ is itself an invariant of $(K,\rho)$ when $\rho$ is abelian \cite{LN:TDD}, in particular $Z_{\underline{\DDP}}^{\tt}(K_o)$ is an invariant of $K$. If $w(K_o)$ is the writhe of $K_o$ then $$P_H^{\rhoc}(K)=t^{w(K_o)|\La_l|/2}\e_{\DDP}(\ZHrhoc(K_o))$$
is an $\FF$-valued invariant of the unframed $G$-knot $(K,\rho)$. Here $t^{|\La_l|}$ is shorthand for $t_1^{|\La_l|_1}\dots t_m^{|\La_l|_m}$ where $(|\La_l|_1,\dots,|\La_l|_m)$ is the $\N^m$-degree of the cointegral $\La_l$ of $H$. When $\rho\equiv 1$ this simply becomes
\begin{align}
\label{eq: knot poly abelian case}
    P_H^{\tt}(K)=t^{w(K_o)|\La_l|/2}\e_{\DDP}(Z_{\uDHp}^{\tt}(K_o))
\end{align}

\noindent and this is a polynomial invariant of the unframed, oriented knot $K$ (without any additional structure). It is easy to show that this polynomial actually belongs to $\kk[t_1^{\pm 1},\dots,t_m^{\pm 1}]\sb \FF$.

\subsection{Elementary properties}\label{subs: lemmas} 

In the untwisted case, the universal invariant of the composition of two tangles is obtained by multiplying the beads of the components being composed, doubling a strand corresponds to applying the coproduct, and reversing the orientation corresponds to applying the antipode (see e.g. \cite{Habiro:bottom-tangles}). These properties extend to our invariant $\ZArho$, except that we have to take care of the $\v_{\a}$'s.

\medskip
To simplify the statement, we only state the composition rule in the following special case. Suppose $T$ is a $G$-tangle having two components $T_i,T_{i+1}$ with two endpoints in $\partial_1 X$ that are adjacent and labelled by the same $\a\in G$, see Figure \ref{fig:comp}. We suppose that, at such endpoints, $T_i$ is oriented downwards and $T_{i+1}$ upwards and the endpoint of $T_i$ is to the left of that of $T_{i+1}$. Thus, we can compose $T$ with an $\a$-labelled left cap on top. Let $(T',\rho')$ be the resulting $G$-tangle.

\begin{figure}[htp!]
    \centering
    \includegraphics[width=6cm]{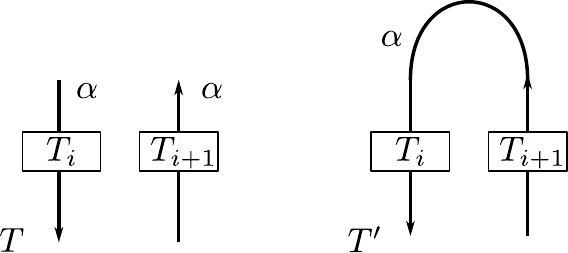}
    \caption{$T'$ is the composition of strands $T_i$ and $T_{i+1}$.}
    \label{fig:comp}
\end{figure}

\begin{lemma}
\label{lemma: COMPOSITION}
The invariant $Z_{\uDH}^{\rho'}(T')$ is computed by $$Z_{\uDH}^{\rho'}(T')=(\id\ot (m_{\c}\circ(\id\ot\v_{\rho([T_i])}))\ot \id)(\ZArho(T))$$
where $m_{\c}$ is applied on the factors corresponding to $T_i$ and $T_{i+1}$ and $\c$ is the $G$-label of the endpoint of $T_i$.
\end{lemma}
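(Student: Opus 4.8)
The plan is to compute both sides from diagrams and match them bead by bead. I would start from an oriented diagram $D$ of $T$ with all strands pointing upward at crossings, and form a diagram $D'$ of $T'$ by attaching an $\a$-labelled left cap on top of the two adjacent endpoints of $T_i$ and $T_{i+1}$. Since a left cap carries the unit $1_{\c}=\e_H\ot 1_H$ rather than the pivot, the associated bead is trivial, so $D'$ has exactly the same crossings, $R$-matrices and pivots as $D$; the only change is that $T_i$ and $T_{i+1}$ are fused into a single new component $\wt T$ which, following its orientation, traverses $T_{i+1}$, then the cap, then $T_i$. All other components, together with their edge labels, partial longitudes and beads, are untouched, which accounts for the outer $\id\ot\cdots\ot\id$ in the statement; here one uses that $\rho'$ agrees with $\rho$ on every loop not passing over the cap.

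The heart of the argument is to compare the partial longitudes computed in $X_{T'}$ with those computed in $X_T$. For an edge $e\sb T_i$, the terminal reference point of $\wt T$ is the terminal point of $T_i$, and the backward normal from it reaches $e$ along the same path as in $T$, so $l_e$ is unchanged and the crossing isomorphism applied to $x_e$ is still $\v_{\rho(l_e)}$. For an edge $e\sb T_{i+1}$, however, the backward normal of $\wt T$ must first run through the entire core of $T_i$ and over the cap before descending along $T_{i+1}$ to $e$; thus $l_e^{T'}=[T_i]\cdot l_e^{T}$ in $\pi_1(X_{T'})$, since the full backward traversal of the core of $T_i$ is by definition the partial longitude $[T_i]=l_{e'_i}$ of the starting edge. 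Equivalently, in the white-bead description the product of all crossing isomorphisms lying after an edge of $T_{i+1}$ on $\wt T$ now accumulates, in addition to the $T_{i+1}$-beads $\v_{\rho(l_e)}$, the full product of the white beads of $T_i$, which equals $\v_{\rho([T_i])}$. Using $\v_{\b_2}\v_{\b_1}=\v_{\b_2\b_1}$, the isomorphism applied to $x_e$ for $e\sb T_{i+1}$ is therefore $\v_{\rho([T_i])}\circ\v_{\rho(l_e)}$.

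Finally I would assemble the bead product along $\wt T$. Because $\v_{\rho([T_i])}$ is an algebra map applied to every $T_{i+1}$-bead, it factors out of the product over $T_{i+1}$, turning that part into $\v_{\rho([T_i])}$ applied to the $(i+1)$-th factor of $\ZArho(T)$; the $T_i$-beads, being unchanged, reproduce the $i$-th factor. Multiplying right-to-left along $\wt T$, so that the later $T_i$-beads sit to the left of the cap's unit and of the transformed $T_{i+1}$-beads, yields exactly $m_{\c}\big(x_{(i)}\ot\v_{\rho([T_i])}(x_{(i+1)})\big)$ on these two factors, which is the asserted formula; here $\c=\a_i$ is the label of the endpoint of $T_i$, and the target algebra $D(H)_{\c}$ is correct because $\v_{\rho([T_i])}$ sends $D(H)_{\a_{i+1}}=D(H)_{\a}$ to $D(H)_{\rho([T_i])\a\rho([T_i])^{-1}}=D(H)_{\c}$. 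The main obstacle is the geometric identity $l_e^{T'}=[T_i]\,l_e^T$ for edges on $T_{i+1}$: one must verify the homotopy carefully in $X_{T'}$, tracking the basepoint paths and checking that sliding the normal over the cap introduces no extra linking, and one must confirm the left-cap/unit (rather than pivot) convention and that the super-signs are inert since the $R$-matrices and pivots have degree zero.
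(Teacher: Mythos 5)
Your argument is correct and is essentially the paper's proof in expanded form: the paper likewise observes that the new component is $T_{i+1}$ followed by $T_i$, that the left cap contributes only the unit, and that sliding the $T_{i+1}$-bead past all the white beads of $T_i$ (whose product is $\v_{\rho([T_i])}$) before multiplying with the $T_i$-bead gives $m_{\c}\circ(\id\ot\v_{\rho([T_i])})$. Your extra bookkeeping of the partial longitudes ($l_e^{T'}=[T_i]\,l_e^{T}$ for $e\sb T_{i+1}$) and of the target algebra $D(H)_{\c}$ is a correct unwinding of the same idea.
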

\begin{proof}
With the above conventions, the new component in $T'$ consists of $T_{i+1}$, with its orientation, followed by $T_i$. The bead of this component is obtained by sliding the bead on $T_{i+1}$ along $T_i$, evaluating on all $\v_{\b}$'s encountered along $T_i$, and multiplying with the bead of $T_i$. Since the product of the $\v_{\b}$'s along $T_i$ equals $\v_{\rho([T_i])}$, this proves the lemma.
 \end{proof}


Now let $T_0$ be a component of a $G$-tangle $(T,\rho)$. Let $T'$ be the tangle obtained by doubling $T_0$ along the framing. Suppose $T'$ is a $G$-tangle with $\rho':\pi_1(X_{T'})\to G$ and let $\rho=\rho'|_{\pi_1(X_T)}$. Let $\rho' (\mu_1)=\a, \rho'(\mu_2)=\b$ where $\mu_1,\mu_2$ are the meridians of the two copies of $T_0$ in $T'$ (say, at the endpoints of the copies of $T_0$). Then $\rho(\mu)=\a\b$ where $\mu$ is the meridian corresponding to $T_0$.

\begin{figure}[H]
 \includegraphics[width=7cm]{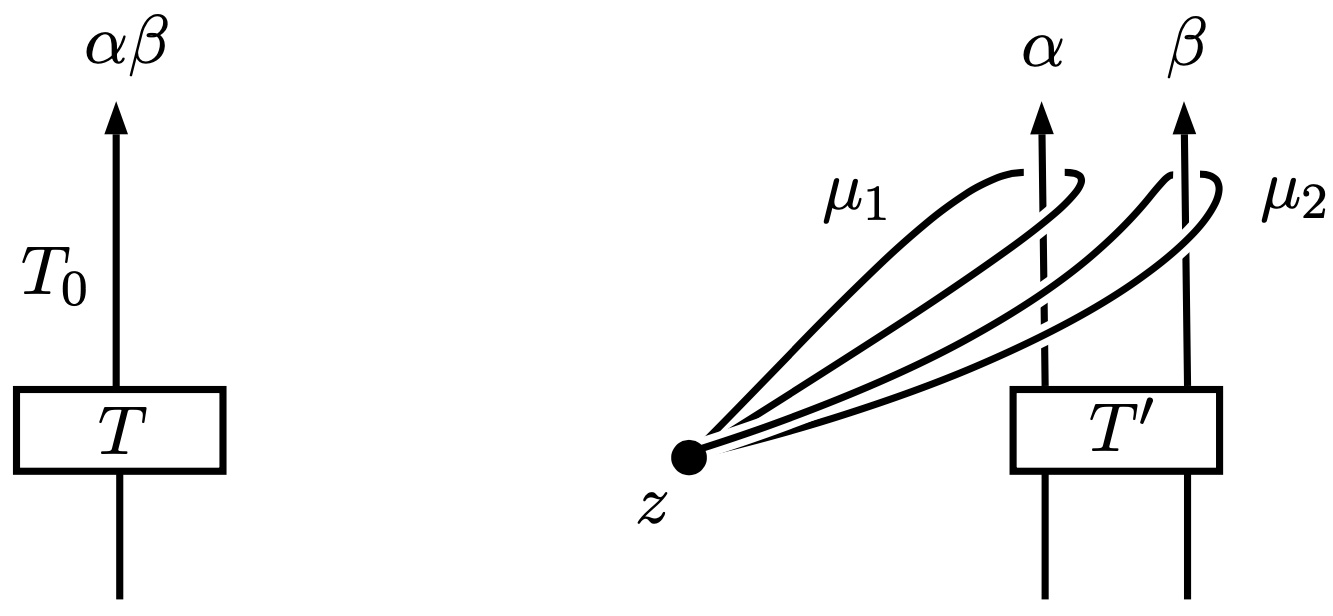}
\caption{On the left is the $G$-tangle $(T,\rho)$. For simplicity, we just depict the component $T_0\sb T$, but $T$ may have any number of components. On the right is $(T',\rho')$ and the meridians $\mu_1,\mu_2$, which have linking number -1 with the corresponding strands of $T'$.}
    
\end{figure}

\begin{lemma}
\label{lemma: DOUBLING}
If $T'$ is constructed by duplicating $T_0\sb T$ as above, then $$(\id\ot\dots\ot \De_{\a,\b}\ot\dots\ot\id)(\ZArho(T))=Z_{\uDH}^{\rho'}(T')$$
where $\De_{\a,\b}$ is applied on the tensor factor corresponding to $T_0$.
\end{lemma}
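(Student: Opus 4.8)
The plan is to reduce the global identity to a purely local verification at each feature of the diagram meeting $T_0$, and then to reassemble using that $\De_{\a,\b}$ is an algebra morphism. Fix a diagram $D$ of $T$ computing $\ZArho(T)$ by the bead–sliding procedure, and let $D'$ be the diagram of $T'$ obtained by duplicating $T_0$ along the blackboard framing, so that each crossing, cap and cup of $D$ lying on $T_0$ is replaced by two parallel copies (and each self-crossing of $T_0$ by four). Write $\a=\rho'(\mu_1)$, $\b=\rho'(\mu_2)$, so that $\a\b=\rho(\mu)$ on the original $T_0$; every edge of $D$ disjoint from $T_0$ keeps its $G$-label, since $\rho=\rho'|_{\pi_1(X_T)}$. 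I claim that, read off edge by edge along $T_0$, the pair of beads produced in $D'$ is exactly $\De_{\a,\b}$ applied to the single bead produced in $D$.

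The local identities are precisely the quasi-triangularity axioms. At a positive crossing where $T_0$ is the overstrand (label $\a\b$) passing over an edge of label $\c$, the overpass bead is the first factor of $R_{\a\b,\c}$, and duplicating the overstrand is governed by axiom (4):
\[
(\De_{\a,\b}\ot\id_{\DHc})(R_{\a\b,\c})=\bigl((\id_{\DHa}\ot\v_{\b^{-1}})(R_{\a,\b\c\b^{-1}})\bigr)_{1\b 3}\cdot (R_{\b,\c})_{\a 23}.
\]
The conjugated label $\b\c\b^{-1}$ and the factor $\v_{\b^{-1}}$ on the right match the fact that, between the two new crossings, the understrand has passed under one copy of $T_0$, so its label is conjugated and it carries the corresponding crossing isomorphism. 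Symmetrically, when $T_0$ is the understrand one splits the second factor of $R_{\c,\a\b}$ via axiom (3), $(\id_{\DHc}\ot\De_{\a,\b})(R_{\c,\a\b})=(R_{\c,\b})_{1\a 3}\cdot(R_{\c,\a})_{12\b}$; a self-crossing of $T_0$ is handled by combining (3) and (4). At a cap or cup of $T_0$ labelled $\a\b$ the pivot $\gg_{\a\b}$ splits as $\gg_\a\ot\gg_\b$ by the group-like identity $\De_{\a,\b}(\gg_{\a\b})=\gg_\a\ot\gg_\b$. Finally, each crossing isomorphism slid along $T_0$ intertwines correctly with the coproduct because $\v_\gamma$ is a morphism of the Hopf $G$-coalgebra, i.e. $(\v_\gamma\ot\v_\gamma)\De_{\a,\b}=\De_{\gamma\a\gamma^{-1},\gamma\b\gamma^{-1}}\v_\gamma$; negative crossings are identical after replacing $R$ by $R^{-1}$ and $\v_\gamma$ by $\v_{\gamma^{-1}}$.

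To conclude I would argue as follows. The bead on $T_0$ in $D$ is, by definition, a product (in orientation order) of the $R$-factors and pivots assigned to $T_0$ with crossing isomorphisms inserted. Applying $\De_{\a,\b}$ and using that it is an algebra homomorphism distributes the coproduct over this product, and the local identities above show that each factor becomes exactly the corresponding pair of factors produced on the two copies of $T_0$ in $D'$, with all conjugated labels and crossing isomorphisms matching. Multiplying the resulting beads on the two copies reproduces $Z_{\uDH}^{\rho'}(T')$ in the $T_0$-slot, while the other tensor slots are untouched, giving the asserted formula.

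The main obstacle is the bookkeeping of $G$-labels and crossing isomorphisms between the two new copies: one must check that the conjugated label $\b\c\b^{-1}$ and factor $\v_{\b^{-1}}$ predicted by axiom (4) (and their analogues for the understrand and for self-crossings), together with the rewriting of the partial longitudes $\rho(l_x)$ on the duplicated strand, agree precisely with what the diagrammatic rule assigns to $D'$. Once this local matching is verified — essentially a translation of axioms (3)–(4) into the bead calculus — the global statement follows formally from $\De_{\a,\b}$ being an algebra morphism, exactly as in the untwisted case of Habiro \cite{Habiro:bottom-tangles}.
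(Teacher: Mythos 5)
Your proposal is correct and follows essentially the same route as the paper: the paper's proof also reduces to checking the identity on elementary tangles, invoking the quasi-triangularity axioms at crossings, the group-like property $\De_{\a,\b}(\gg_{\a\b})=\gg_{\a}\ot\gg_{\b}$ at right caps/cups, and the fact that the $\v_{\gamma}$'s are Hopf morphisms to pass to compositions. Your write-up simply makes explicit which axiom (namely (3) for understrands and (4) for overstrands) handles each case, which the paper leaves implicit.
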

\begin{proof}
We need to check this on the elementary tangles. For the crossing tangles, this follows by the defining property of the graded $R$-matrix. For right caps and cups, it follows from $\De_{\a,\b}(\gg_{\a\b})=\gg_{\a}\ot \gg_{\b}$ and for left caps/cups it is obvious. The fact that the $\v_{\a}$'s are Hopf isomorphisms implies that the property holds for compositions of elementary tangles, thus for all tangles.
\end{proof}

\begin{lemma}
\label{lemma: REVERSE ORTTN}
Let $T_0\sb T$ be a component oriented upwards, that is, it begins on $\partial_0 X$ and ends on $\partial_1 X$. Let $\d$ be the label at the endpoint of $T_0$. Let $T'$ be the tangle obtained by reversing the orientation of $T_0$. If we use the same $\rho$ for both tangles, then
$$(\id\ot\dots \ot \v_{\rho([T_0])}^{-1}\circ S_{\d}\ot\dots\ot \id)(\ZArho(T))=\ZArho(T')$$
where $\v^{-1}_{\rho([T_0])}\circ S_{\d}$ is applied on the tensor factor corresponding to $T_0$.
\end{lemma}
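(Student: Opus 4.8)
The plan is to mimic the strategy used for Lemmas \ref{lemma: COMPOSITION} and \ref{lemma: DOUBLING}: check the claim on the elementary pieces through which $T_0$ passes and then propagate to arbitrary tangles using the multiplicativity of the construction and the fact that the $\v_{\a}$'s are Hopf isomorphisms. Before the computation, I would first confirm that the $G$-labels are consistent, so that the target algebra on the right-hand side is the correct one. Reversing the orientation of $T_0$ reverses the linking-number convention for the meridian at the new endpoint, which is the old starting edge $e'_0$; hence this edge now carries the label $\rho(\mu_{e'_0})^{-1}$. Using $l_{e'_0}\mu_{e'_0}l_{e'_0}^{-1}=\mu_0$ together with $[T_0]=l_{e'_0}$ gives $\mu_{e'_0}=[T_0]^{-1}\mu_0[T_0]$, so $\rho(\mu_{e'_0})^{-1}=\rho([T_0])^{-1}\d^{-1}\rho([T_0])$. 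This is precisely the index of the algebra $D(H)_{\rho([T_0])^{-1}\d^{-1}\rho([T_0])}$ in which the composite $\v_{\rho([T_0])}^{-1}\circ S_{\d}\colon D(H)_{\d}\to D(H)_{\d^{-1}}\to D(H)_{\rho([T_0])^{-1}\d^{-1}\rho([T_0])}$ takes values.

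For the core of the argument, reversing the orientation of the whole strand $T_0$ reverses the order in which its beads are multiplied and replaces each elementary bead by its graded antipode. Since $S_{\d}$ is an algebra anti-automorphism, reversing the order of a product and applying the antipode termwise coincides with applying the antipode to the entire product; it therefore suffices to verify the effect on each elementary bead. I would check: for a crossing, the graded analogues of the standard $R$-matrix antipode identities (of the form $(S_{\a}\ot\id)(R_{\a,\b})=R_{\a^{-1},\b}^{-1}$ and $(\id\ot S_{\b})(R_{\a,\b}^{-1})=R_{\a,\b^{-1}}$) show that flipping the orientation of the over- or under-strand replaces the $R$-matrix bead by the appropriately antipoded bead on the reversed strand; and for a cap or cup, the identity $S_{\d}(\gg_{\d})=\gg_{\d^{-1}}$ shows the pivot bead transforms correctly. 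This last identity follows from the ribbon axiom $S_{\a}(v_{\a})=v_{\a^{-1}}$ together with the group-likeness $\De_{\a,\b}(\gg_{\a\b})=\gg_{\a}\ot\gg_{\b}$ of the $G$-pivot.

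The main obstacle I expect is the bookkeeping of the crossing isomorphisms $\v$, which accounts for the extra factor $\v_{\rho([T_0])}^{-1}$. Reversing the orientation of $T_0$ not only antipodes the beads but also transforms every $\v$-twist encountered along $T_0$; the partial longitudes computed with respect to the reversed orientation differ from the original ones, and collecting the resulting twists produces exactly $\v_{\rho([T_0])}^{-1}$. Verifying this requires the $\v$-invariance of the $R$-matrix, namely axiom $(\v_{\c}\ot\v_{\c})(R_{\a,\b})=R_{\c\a\c^{-1},\c\b\c^{-1}}$, together with the equivariance $\v_{\b}(v_{\a})=v_{\b\a\b^{-1}}$ of the ribbon element and the corresponding property of the pivot. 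Because the $\v$'s are Hopf isomorphisms compatible with $S$ and $\De$ in the graded sense, the verification on elementary tangles extends to compositions exactly as in the proof of Lemma \ref{lemma: DOUBLING}, which completes the argument.
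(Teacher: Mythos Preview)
Your outline follows the same ``check on elementary pieces, then propagate'' strategy as the paper, and the label computation at the start is correct. However, the treatment of caps and cups is where the real content of the lemma lies, and your account of it is both slightly wrong and genuinely incomplete.

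First, the identity you state is off: since the $G$-pivot is group-like one has $S_{\d}(\gg_{\d})=\gg_{\d^{-1}}^{-1}$, not $\gg_{\d^{-1}}$. More importantly, even the correct identity does not by itself show that ``the pivot bead transforms correctly''. When you reverse the orientation of $T_0$, the exponent of the pivot at each right cap/cup changes according to $\e_i+\e'_i=\pm 1$ (sign depending on cap vs.\ cup), \emph{not} $\e'_i=-\e_i$. So simply applying the antipode to each pivot bead misses by a single power of $\gg$ at every cap and cup. The paper's proof shows exactly where this missing power comes from: subdivide $T_0$ into arcs $a_1,\dots,a_k$ between successive caps/cups; the arcs with even index were already oriented downward, so their bead already carries one antipode. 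Applying $S_{\d}$ globally therefore hits those beads with $S_{\b}S_{\b^{-1}}(x)=\gg_{\b^{-1}}x\gg_{\b^{-1}}^{-1}$, and the two extra pivot factors produced this way are precisely what correct the exponents at the adjacent cap and cup. This cancellation is the crux of the argument and is invisible if one only records the abstract identity $S(\gg)=\gg^{-1}$.

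A related point: because the paper's diagram conventions require both strands at a crossing to be oriented upward, reversing a strand through a crossing forces you to redraw it with an extra cap and cup. The paper carries out that computation explicitly (using the same $S^2=\ad_{\gg}$ mechanism) rather than invoking the abstract identities $(S_{\a}\ot\id)(R_{\a,\b})=R_{\a^{-1},\b}^{-1}$, and then only after handling crossings does it separately treat the case where $T_0$ itself has caps and cups via the arc decomposition above. Your claim that the general case follows ``exactly as in the proof of Lemma~\ref{lemma: DOUBLING}'' skips this step; the paper makes a point of doing it by hand.
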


\begin{remark}
\label{remark: reverse OR}
Recall that in our definition of $\ZHrho$ we ``gather" the beads at the endpoint of each component and then multiply. If we reverse the orientation of $T_0$ and we gather the beads at the same point as before (which is the starting point of $T'_0$), then we just need to apply the antipode $S_{\d}$, but if we want to gather at the endpoint of $T'_0$ we need to slide our bead down through all of $T_0$, this gives the additional $\v_{\rho([T_0])}^{-1}$ above.

\end{remark}


\begin{proof}

Suppose first we reverse the understrand of a positive crossing (oriented upwards) whose bottom labels are $\a,\b$. Thus, $\d=\a\b\a^{-1}$ and $[T_0]=\a$. Then the invariant is computed as on the left of the following figure:
\begin{figure}[H]
 \includegraphics[width=8cm]{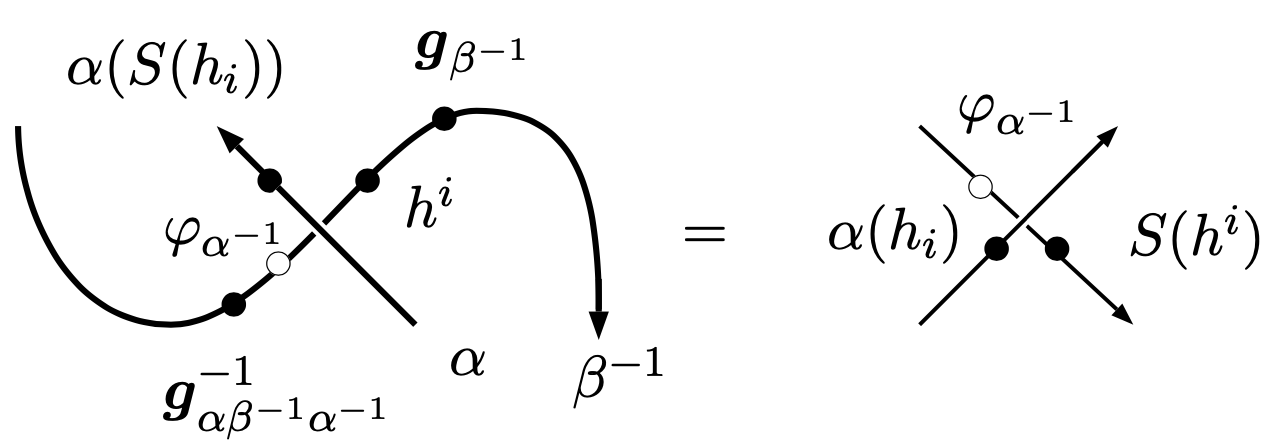}
\end{figure}
For convenience we keep track of the white beads in our pictures. That this equals the picture on the right follows from the following computation:
\begin{align*}
    \a(S(h_i))\ot \gg_{\b^{-1}}h^i\v_{\a^{-1}}(\gg^{-1}_{\a\b^{-1}\a^{-1}})&=\a(S(h_i))\ot \gg_{\b^{-1}}h^i\gg^{-1}_{\b^{-1}}\\
    &=\a(S(h_i))\ot S_{\b}S_{\b^{-1}}(h^i)\\
    &=\a(S(h_i))\ot S^2(h^i)\\
    &=\a(h_i)\ot S(h^i).
\end{align*}
But the picture on the right is exactly obtained by applying $\v_{\a}^{-1}S_{\d}$ to the strand that is being reversed (note that $S_{\d}$ is simply $S_{H^*}$ on $H^*$). Similarly, if we reverse the overstrand of a positive crossing, the invariant is computed as on the left of the following figure:
\begin{figure}[H]
    \centering
  \includegraphics[width=8cm]{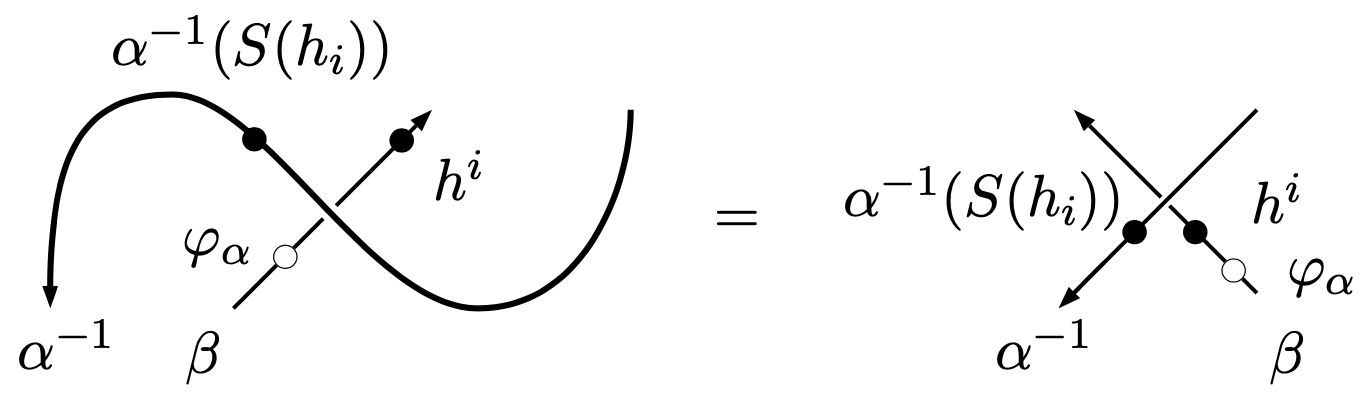}
\end{figure}
Here we have $\d=\a$ and $[T_0]=1$. It is easy to see that the right hand side is obtained by applying $\v_{1}^{-1}S_{\a}=S_{\a}$ to the tangle with the strand oriented upwards. For negative crossings, a similar computation shows that the lemma holds as well. Using the above composition property (and that the antipode is an algebra antiautomorphism), we deduce the lemma for in the case $T_0$ that has no caps or cups.

Now, suppose $T_0$ has caps and cups. We will subdivide (a diagram of) $T_0$ in subarcs $a_1,\dots,a_k$ without caps/cups. Thus, as we follow the orientation of $T_0$, we follow first $a_1$, then a cap, then $a_2$, then a cup, and so on until $a_k$. Each $a_i$ with odd $i$ is oriented upwards and for even $i$ it is oriented downwards. We let $\a_i$ be the label at the starting point of $a_i$, thus, $\a_{i+1}$ is the label of the endpoint of each $a_i$ and we let $\a=\a_1$ and $\a_{k+1}=\d$. We also let $\c_i$ be the image of the partial longitude from the endpoint of $T_0$ to the top of $a_i$, and $\c'_i$ be the partial longitude for $T'_0$, that is, $\c'_i$ goes from the starting point of $T_0$ to the top of $a_i$. We also denote $\c=[T_0]$, so $\c_i(\c'_i)^{-1}=\c$ for each $i$.  For each $i$ (even or odd) we let $x_i$ be the total bead for that arc as if the arc was oriented upwards (and the bead is gathered at the top so $x_i\in \DD_{\a_{i+1}}$ for odd $i$, $x_i\in \DD_{\a_i^{-1}}$ for even $i$). Thus, by what was shown above, for even $i$ the actual bead along that arc is $S_{\a_i^{-1}}(x_i)$ (if the bead is gathered at the top, see Remark \ref{remark: reverse OR} above). Then, the $T_0$ tensor factor of $\ZArho(T)$ can be written as $$ x_k\gg_{\b}^{\e_k}\dots \gg_{\b}^{\e_3}\v_{\c_2}(S_{\a_2^{-1}}(x_2))\gg_{\b}^{\e_2}\v_{\c_1}(x_1)$$
where $\e_i$ is the power of the pivot at the cap/cup right before $a_i$ (so $\e_i\in \{0,\pm 1\}$). Now we apply $\v_{\c^{-1}}S_{\d}=S_{\a}\v_{\c^{-1}}$. First we apply $\v_{\c^{-1}}$ this results in
$$ \v_{\c'_k}(x_k)\gg_{\a}^{\e_k}\dots \gg_{\a}^{\e_3}\v_{\c'_2}(S_{\a_2^{-1}}(x_2))\gg_{\a}^{\e_2}\v_{\c'_1}(x_1).$$
Applying $S_{\a}$ gives
\begin{align*}
   & S_{\a}(\v_{\c'_k}(x_k)\gg_{\a}^{\e_k}\dots \gg_{\a}^{\e_3}\v_{\c'_2}(S_{\a_2^{-1}}(x_2))\gg_{\a}^{\e_2}\v_{\c'_1}(x_1))\\
   &=S_{\a}\v_{\c'_1}(x_1)\gg_{\a^{-1}}^{-\e_2}S_{\a}\v_{\c'_2}S_{\a_2^{-1}}(x_2)\gg_{\a^{-1}}^{-\e_3}\dots \gg_{\a^{-1}}^{-\e_k}\v_{\c'_k}S_{\a}\v_{\c'_k}(x_k)\\
  &= \v_{\c'_1}(S_{\a_2}(x_1))\gg_{\a^{-1}}^{-\e_2}\v_{\c'_2}(S_{\a_2}S_{\a_2^{-1}}(x_2))\gg_{\a^{-1}}^{-\e_3}\dots \gg_{\a^{-1}}^{-\e_k}\v_{\c'_k}S_{\d}(x_k)\\
  &=\v_{\c'_1}(S_{\a_2}(x_1))\gg_{\a^{-1}}^{-\e_2}\v_{\c'_2}(\gg_{\a_2^{-1}}x_2\gg_{\a_2^{-1}}^{-1})\gg_{\a^{-1}}^{-\e_3}\dots \gg_{\a^{-1}}^{-\e_k}\v_{\c'_k}S_{\d}(x_k)\\
  &=\v_{\c'_1}(S_{\a_2}(x_1))\gg_{\a^{-1}}^{-\e_2+1}\v_{\c'_2}(x_2)\gg_{\a^{-1}}^{-1-\e_3}\dots \gg_{\a^{-1}}^{-\e_k}\v_{\c'_k}S_{\d}(x_k)
\end{align*}

We used that $S_{\b}S_{\b^{-1}}(x)=\gg_{\b^{-1}}x\gg_{\b^{-1}}^{-1}$ for each $x\in \DD_{\b^{-1}}$ and $\b$ and that $S_{\a}\v_{\c'_i}=\v_{\c'_i}S_{(\c'_i)^{-1}\a\c'_i}$ and $(\c'_i)^{-1}\a\c'_i=\a_{i+1}$ for odd $i$ or $=\a_i$ for even $i$. Note that $\e'_i+\e_i=1$ for each even $i$ (which correspond to caps) and $\e'_i+\e_i=-1$ for each odd $i$ (corresponding to cups), where $\e'_i$ is the power of the pivot at the given cap/cup when the orientation is reversed. Therefore, the above equals $$\v_{\c'_1}(S_{\a_2}(x_1))\gg_{\a^{-1}}^{\e_2'}\v_{\c'_2}(x_2)\gg_{\a^{-1}}^{\e_3'}\dots \gg_{\a^{-1}}^{\e_k'}\v_{\c'_k}S_{\d}(x_k)$$
which is exactly the bead of $T_0$ with its inverse orientation.
\end{proof}

Now let $T_0$ be a component of a tangle $T$ as above, but suppose now that both endpoints of $T_0$ lie on $\partial_1X$ and that the endpoint of $T_0$ is to the right of its starting point. Let $\wt{S}_{\a}:\DD_{\a}\to \DD_{\a^{-1}}$ be defined by
\begin{align}
\label{eq: antipode twisted by pivot}
    \wt{S}_{\a}(x)=S_{\a}(x)\gg^{-1}_{\a^{-1}}.
\end{align}
Let $\d$ be the label of the endpoint of $T_0$. If $T'$ is obtained by reversing the orientation of $T_0$ then the above lemma implies that 
\begin{align}
\label{eq: reverse OR antipode twisted by pivot}
   (\id\ot\dots \ot \v^{-1}_{\rho([T_0])}\wt{S}_{\d}\ot\dots\ot \id)(\ZArho(T))=\ZArho(T').
\end{align}

\medskip

When we use the abelian representation $\tt_T=\tt$ one has to be a bit more careful. This representation depends on the orientation of each component, in particular $\tt_{T'}$ is not equal to $\tt_{T}$. The beads at a negative crossing (of the first type) are given as follows:

\begin{figure}[H]
 \includegraphics[width=7cm]{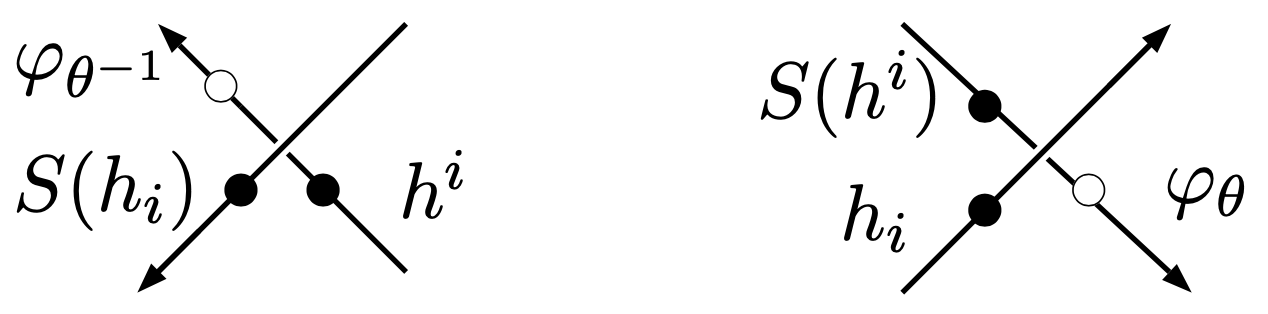}
\end{figure}

\def\uDHp{\underline{D(H')}}
\subsection{Twist knots}\label{subs: twist knots} 
To illustrate our constructions we will compute the twisted universal invariant explicitly on the family of twist knots. 
Let $K_n$ be the $2n$-twist knot. This is an alternating knot with $2n+2$ crossings and genus 1. The knot $K_1$ is the figure-eight knot, $K_2$ (Stevedore's knot) is drawn on the left of Figure \ref{fig: twist knot}. For simplicity, we suppose $m=1$, that is, $H$ is an $\N$-graded Hopf algebra. Recall that the pivot of $D(H')_{\tt}$ is $t^{-d(H)/2}\gg$ where $\gg=\bbeta\ot\bb$ is the pivot of $D(H)$ (note that $d(H)=|\La_l|$ for $m=1$). We also take $\rho\equiv 1$ (but we keep $\tt:H_1(X_K)\to\Aut(H)$). Thus all white beads in our diagrams will be $\v_{\tt^{\pm 1}}$, we will denote these simply by $\tt^{\pm 1}$. We will show directly that $\deg \ P_H^{\tt}(K_n)\leq 2d(H)$ for any $n$. This is in agreement with the genus bound, Theorem \ref{Thm: main thm}, since as mentioned the genus of every twist knot is $1$.
In what follows we denote $Z^{\tt}=Z_{\uDHp}^{\tt}(K_n)$. 
\medskip

\begin{figure}[t]
  \includegraphics[width=12cm]{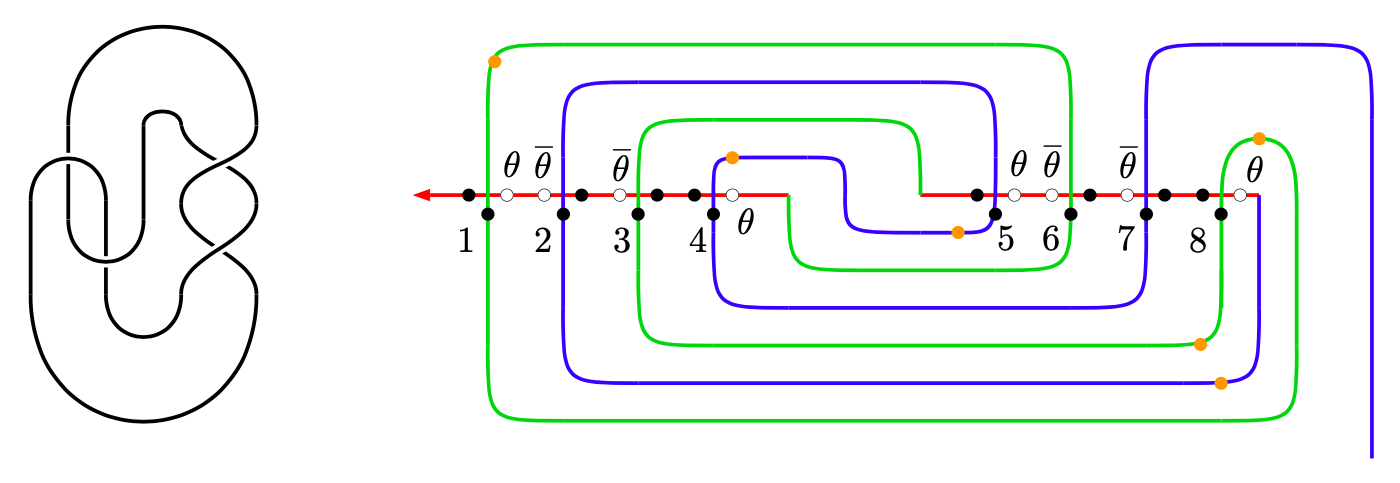}

    \caption{The figure eight knot and a 2-bridge presentation (opened to be a $(1,1)$-tangle). The black dots represent elements of $H,H^*$, the white dots represent $\tt^{\pm 1}$'s and the orange dots on right caps (resp. right cups) represent $\gg=t^{-d(H)/2}\bbeta\ot\bb$ (resp. $\gg^{-1}$). Here $\ov{\tt}$ denotes $\tt^{-1}$.}
    \label{fig:figure eight}
\end{figure}

Let's begin with the figure eight knot. We compute $Z^{\tt}$ from the 2-bridge diagram of the right of Figure \ref{fig:figure eight}. We use bridge presentations with minimal number of bridges to reduce the number of Drinfeld products to use, this simplifies considerably the expression for $Z^{\tt}$. Note that this diagram has rotation number zero so all the powers of $t$ from pivots cancel out, in other words, we can simply put the untwisted pivot $\gg=\bbb\ot\bb$ on right caps and $\gg^{-1}$ on right cups, which is what we do. The diagram also has writhe zero so that $\e(Z^{\tt})=P_H^{\tt}(K_1)$. In a bridge diagram, other from the $\gg^{\pm 1}$'s on caps/cups, all the beads over an overarc are in $H$ and all beads on an underarc are elements of $H^*$. All white beads, which are $\tt^{\pm 1}$, lie on underarcs. We number the crossings from left to right as shown in Figure \ref{fig:figure eight}. If $\sum h_j\ot h^j$ represents the $R$-matrix (of $D(H)$) on the $j$-th crossing, then the invariant is given by (the using Einstein summation convention to hide the eightfold sum):
\begin{align*}
    Z_{\uDHp}^{\tt}(K_1)&= h^1h^2\tt^{-1}(h^3h^4)\cdot_{\tt} S(h_6)\gg h_1\gg h_8\gg^{-1}S(h_3)\cdot_{\tt} h^5h^6\tt^{-1}(h^7h^8)\cdot_{\tt} \gg^{-1}S(h_2)h_5h_4S(h_7)
\end{align*}
Here $\cdot_{\tt}$ denotes the multiplication of the twisted double $D(H')_{\tt}$.
Note that $$\e_{D(H')}(p\cdot_{\tt}x\cdot_{\tt} a)=\e_{H^*}(p)\e_{H}(a)\e_{D(H')}(x)$$
for any $p\in H^*, a\in H, x\in D(H')_{\tt}$.
Thus $\e(Z^{\tt})$ equals
\begin{align*}
   &\e(h^1h^2\tt^{-1}(h^3h^4))\e( \bb^{-1}S(h_2)h_5h_4S(h_7))\e[(S(h_6)\gg h_1\gg h_8\gg^{-1}S(h_3))\cdot_{\tt} (h^5h^6\tt^{-1}(h^7h^8)\bbb^{-1})]\\
    &=\e[S(h_6)\gg^2h_8\gg^{-1}\cdot_{\tt}h^6\tt^{-1}(h^8)\bbb^{-1}].
\end{align*}

Here we used that $\e$ is an algebra morphism over $H$ and over $H^*$ (though not over all of $D(H')_{\tt}$) and that $\e(h_j)h^j=h_j\e(h^j)=1$ for each $j$. Note that $$\e(x'\cdot_{\tt}y')=\lb S^{-1}\tt^{-1}(x'_{(2)}),y'_{(1)}\rb\lb x'_{(1)},y'_{(2)}\rb$$
for each $x'\in H, y'\in H^*$. Thus, writing $S(h_6)\gg^2h_8\gg^{-1}=\bbb x'$ with $x'\in H$ and $y'=h^6h^8\bbb^{-1}\in H^*$ ($x',y'$ have no meaning independently, but $x'\ot y'$ does) and using $\tt^{-1}(h^i)=t^{-|h^i|}h^i=t^{|h_i|}h^i$ we obtain

\begin{align*}
    \e(Z^{\tt})&=\e(\bbb x'\cdot_{\tt}t^{|h_8|}y')\\
    &=t^{|h_8|}\e(x'\cdot_{\tt}y')\\
    &=t^{|h_8|}\lb S^{-1}\tt^{-1}(x'_{(2)}),y'_{(1)}\rb\lb x'_{(1)},y'_{(2)}\rb\\
    &=t^{|h_8|-|x'_{(2)}|}\lb S^{-1}(x'_{(2)}),y'_{(1)}\rb\lb x'_{(1)},y'_{(2)}\rb.   
\end{align*}
Note that $\lb S^{-1}(x'_{(2)}),y'_{(1)}\rb\lb x'_{(1)},y'_{(2)}\rb\in\kk$, so the maximal/minimal power of $t$ in $\e(Z^{\tt})$ is the maximum/minimum of $|h_8|-|x'_{(2)}|$. But since $0\leq |h_8|\leq d(H),0\leq |x'_{(2)}|\leq d(H)$ (this uses that $H$ is $\N$-graded, not simply $\Z$-graded) it follows that $-d(H)\leq |h_8|-|x'_{(2)}|\leq d(H)$, hence $\e(Z^{\tt})$ is a polynomial of degree $\leq 2d(H)$.
\medskip

We now consider the case of general $K_n$. The knot $K_n$ has a 2-bridge presentation as shown on the right of Figure \ref{fig: twist knot}, but with $n-1$ full rotations of the ``middle band". This diagram has rotation number and writhe zero as before 
so we can simply use $\gg^{\pm 1}$ on right caps/cups. Let $b_1,a_1,b_2,a_2$ be the arcs of the bridge presentation as one follows the orientation of $K$. In Figure \ref{fig: twist knot}, $b_1$ is blue and $b_2$ is green. As in the example of the figure-eight knot, the product of all elements along $a_1$ (resp. $a_2$) is an element $q\in H^*$ (resp. $p\in H^*$). The beads over $b_1,b_2$ are all in $H$, except for the pivots $\gg=\bbeta\ot\bb$ (or inverses). As before, for each arc, we multiply the beads encountered along that arc and gather the product at the very end of that arc. The products along $b_2,b_1$ have the form $\bbb c, \bbb^{-1}d$ for some $c,d\in H$. Then the invariant is $Z^{\tt}=p\cdot_{\tt} \bbb c\cdot_{\tt} q\cdot_{\tt} \bbb^{-1}d$ and $$\e_{D(H')}(Z^{\tt})=\e( p\bbb)\e(d)\e(c\cdot_{\tt}q).$$

As before, $h_j\e(h^j)=\e(h_j)h^j=1_{D(H)}$ so that evaluating $p,d$ on $\e_{D(H')}$ has the effect of killing all the $D(H)$-beads coming from $a_2$ or $b_1$. The only beads that remain are those of the crossings between $a_1$ and $b_2$, the $\tt^{\pm 1}$'s on $a_1$ and the pivots $\gg^{\pm 1}$'s on $b_2$. In other words, except for the pivots, all the remaining beads are shown below (we denote $\ov{\tt}=\tt^{-1}$):
\begin{figure}[H]
 \includegraphics[width=10cm]{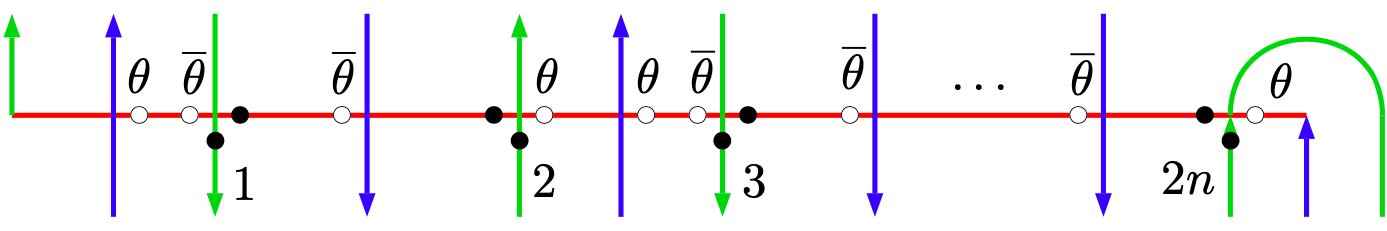}
\end{figure}

Order the crossings between $a_1$ and $b_2$ by $1,2,\dots,2n$ from left to right. Let $h_j\ot h^j$ be the $D(H)$-component of the $R$-matrix on the $j$-crossing. Then, the product of all beads along the red arc is
\begin{align*}
    y&=h^1\tt^{-1}(h^2)h^3\tt^{-1}(h^4)\dots \tt^{-1}(h^{2n})\\
    &=t^{|h_2|+\dots+|h_{2n}|}y'
\end{align*}
where $y'=h^1h^2\dots h^{2n}\in H^*$.
The product along $b_2$ is
\begin{align*}
    x&=S(h_1)\gg\dots S(h_{2n-1})\gg\cdot\gg \cdot h_{2n}\gg^{-1}\dots h_4\gg^{-1}h_2\gg^{-1}.
\end{align*}
Since $S^2(a)=\gg a\gg^{-1}$ for any $a\in H$, this has the form $x=\bbb x'$ with $x'\in H$. Thus
\begin{align*}
    \e(Z^{\tt})&=\e(x\cdot_{\tt}y)=\e(x'\cdot_{\tt}y)\\
    &=\lb S^{-1}\tt^{-1}(x'_{(2)}),y_{(1)}\rb\lb x'_{(1)},y_{(2)}\rb\\
    &=t^{|h_2|+|h_4|+\dots+|h_{2n}|-|x'_{(2)}|}\lb S^{-1}(x'_{(2)}),y'_{(1)}\rb\lb x'_{(1)},y'_{(2)}\rb
\end{align*}

 The term $\lb S^{-1}(x'_{(2)}),y'_{(1)}\rb\lb x'_{(1)},y'_{(2)}\rb\in\kk$. Now, since $H$ is $\N$-graded the product $h^1\dots h^{2n}$ is zero if $|h_1|+\dots+|h_{2n}|>d(H)$, so the only terms that contribute to the above sum are those for which $0\leq |h_2|+\dots +|h_{2n}|\leq d(H)$. Since also $0\leq |x'_{(2)}|\leq d(H)$, it follows that the degree of $\e(Z^{\tt})$ is $\leq 2d(H)$ as we wanted.

\begin{figure}[t]
  \includegraphics[width=12cm]{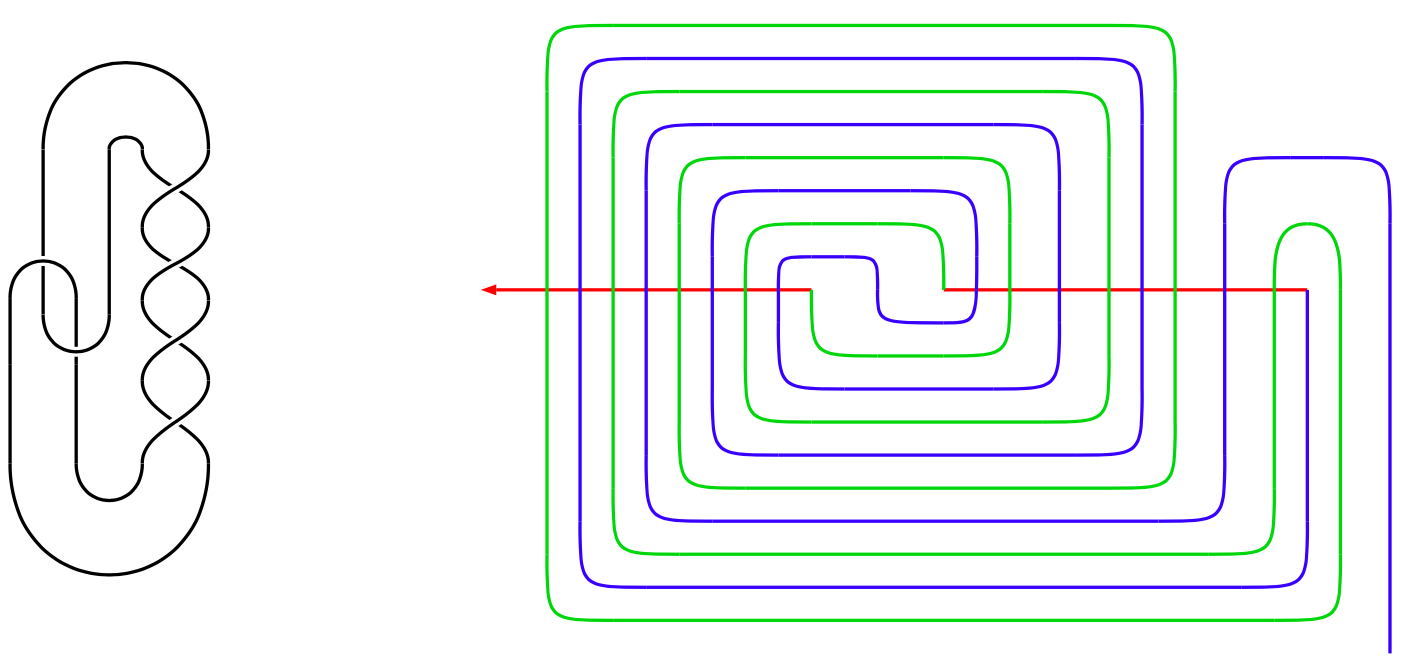}

  \caption{On the left is the twist knot $K_2$ and on the right, a bridge presentation of $K_2$, opened to be a $(1,1)$-tangle. The horizontal red arcs are supposed to be the underarcs.}
     \label{fig: twist knot}
\end{figure}

\section{Proof of Theorem \ref{Thm: main thm}}\label{sect: PROOF}

\def\ait{\a_i\ot\theta}\def\aitm{\a_i\ot\theta^{-1}}\def\bit{\b_i\ot\theta}\def\bitm{\b_i\ot\theta^{-1}}

\def\at{\a\ot\theta}\def\atm{\a\ot\theta^{-1}}\def\bt{\b\ot\theta}\def\btm{\b\ot\theta^{-1}}\def\RTrho{RT_{\uDH}^{\rho}}\def\RTrhoc{RT_{\uDH}^{\rhoc}}\def\tt{\theta}

\def\DDP{\DDp}

Let $H$ be a finite dimensional $\N^m$-graded Hopf algebra over $\kk$ and let $H'=H\ot_{\kk}\FF$ where $\FF=\kk[t_1^{\pm 1/2},\dots,t_m^{\pm 1/2}]$. In what follows we denote $G=\Auto_{\N^m}(H)\sb \Aut(H)$ which we see as a subgroup of $G'\sb \Aut(H')$ via $\a\mapsto \a\ot \tt^0$ and we denote $\DDP_{\a\ot\tt^0}$ simply by $\DDP_{\a}$, that is, $\DDP_{\a}=\DD_{\a}\ot\FF$ for $\a\in G$. The reader interested on the case of ADO may simply suppose $m=1$ and $G=1$.

\def\uDHp{\underline{D(H')}}
\subsection{Formulas from Seifert surfaces}   Let $S$ be a Seifert surface for $K$ and let $g=g(S)$. We suppose $K$ is 0-framed, this is enough for our theorem. After an isotopy, we can suppose $S$ is obtained by thickening a $2g$-component (framed) tangle $T$ all of whose endpoints lie on $\partial_1 X$ and attaching a disk to the top as in Figure \ref{fig: Seifert surface}. We suppose this disk comes with a marked point $p$ on its boundary and let $K_o$ be the $(1,1)$-tangle obtained by opening $K=\p S$ at that point. We denote the components of $T$ by $T_1,\dots,T_{2g}$ from left to right and we suppose that each $T_i$ is oriented ``to the right", that is, the orientation at the rightmost endpoint is upward pointing. Then, $K_o$ is obtained by first doubling all the components of $T$ (with their orientations), reversing the orientation of the right parallel of each component and then composing with various left caps on the top. In the untwisted case, this implies that the universal invariant $Z_{D(H)}(K)=Z_{D(H)}(K_o)\in D(H)$ is obtained from $Z_{D(H)}(T)\in D(H)^{\ot 2g}$ by applying the coproduct to each tensor factor, applying antipodes on the even tensor factors (with a pivot) and finally multiplying:
\begin{align}
\label{eq: untwisted Z from SS}
    Z_{D(H)}(K_o)=m_{D(H)}^{(4g-1)}\circ P^{\ot g}\circ(\id_{D(H)}\ot \wt{S}_{D(H)})^{\ot 2g}\circ\De_{D(H)}^{\ot 2g}\left(Z_{D(H)}(T)\right)
\end{align}
where $P$ is the permutation map $P(u\ot v\ot x\ot y)=v\ot x\ot u\ot y$ (with signs in the super case) and $\wt{S}_{D(H)}(x)=S_{D(H)}(x)\gg^{-1}$. The additional pivot in the antipode comes from the fact that we reversed the orientation of a component of a tangle all of whose endpoints lie on $\R^2\t 1$. This idea comes back to Habiro \cite{Habiro:bottom-tangles}. However, what really makes our theorem work is the appearance of the representation $\rhoc$, or rather, the degree twist $\tt$. The essential observation is that, when $\rhoc$ is restricted to $\pi_1(X_T)$, $\tt$ cancels out, that is $$\rhoc|_{\pi_1(X_T)}=\rho|_{\pi_1(X_T)}.$$
This is because the orientations of $K$ are opposite on each pair of doubled components of $T$. More precisely, let $\mu'_i,\mu''_i$ be the loops corresponding to the rightmost endpoints of the double of a component $T_i$ of $T$ and $\mu_i$ be the loop associated to the rightmost endpoint of $T_i$ itself so that $\mu_i=\mu'_i\mu''_i$. Since $[\mu'_i]=+1=-[\mu''_i]$ in $H_1(X_K)\cong\Z$ we have $\rhoc(\mu'_i)=\a_i\ot\tt$ and $\rhoc(\mu''_i)=\b_i\ot\tt^{-1}$, where $\a_i=\rho(\mu'_i),\b_i=\rho(\mu''_i)$, so that $\rhoc(\mu_i)=\rho(\mu_i)$. The effect of $\tt$ cancelling out is that, in the twisted version of (\ref{eq: untwisted Z from SS}) above, the invariant of $T$ has no powers of $t$ involved. Indeed, since $\rho$ takes values in $G\sb G'$ and the $R$-matrix of $\underline{D(H')}|_G$ is induced from that of $\uDH|_G$ it follows that $$Z_{\uDHp}^{\rho}(T)=Z_{\uDH}^{\rho}(T)\in \bigotimes_{i=1}^{2g} \DD_{\a_i\b_i}\sb \bigotimes_{i=1}^{2g}\DDP_{\a_i\b_i\ot\tt^0}.$$

\begin{figure}
 \includegraphics[width=10cm]{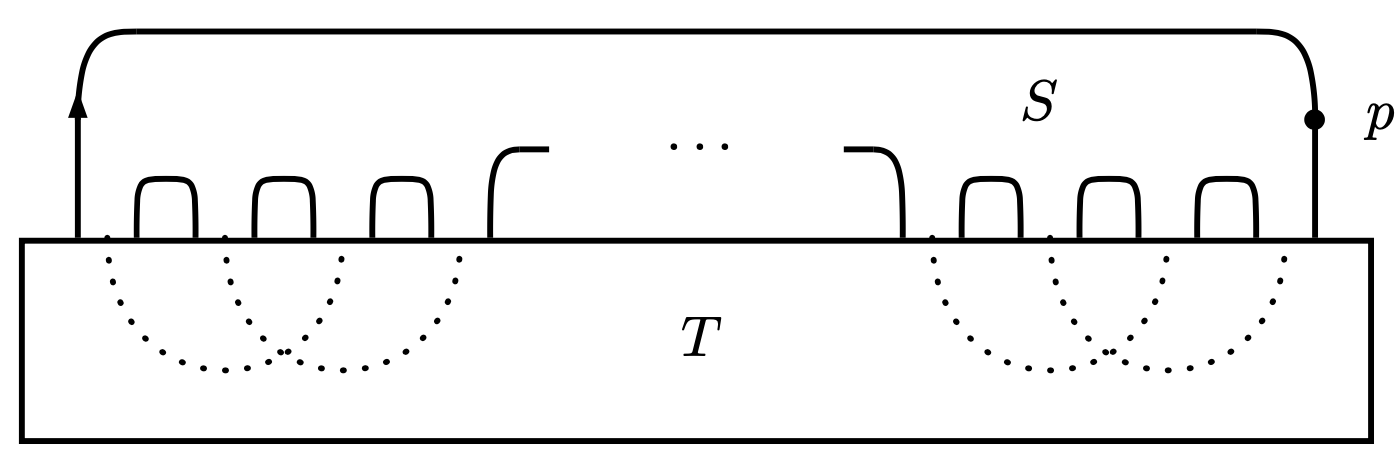}
\caption{$T$ is a tangle that is thickened to give the lower portion of the surface. $T$ is an arbitrary tangle with $2g$ components $T_1,\dots, T_{2g}$ where the endpoints of $T_{2i-1},T_{2i}$ are adjacent as shown above.}
\label{fig: Seifert surface}
\end{figure}

Let $l'_i,l''_i\in \pi_1(X_{K_o})$ be the partial longitudes associated to $\mu'_i,\mu''_i$ above. Let $\c_i=\rho(l'_i)$ and $\d_i=\rho(l''_i)$ so that $\c_i\a_i\c_i^{-1}=\d_i\b_i^{-1}\d_i^{-1}=\b_1^{-1}$ for each $i$ (note that $\b_1^{-1}$ is the label of the endpoint of $K_o$). Note that $l'_i,l''_i$ are zero in $H_1(X_K)$ so that $\rhoc(l'_i)=\c_i$ and $\rhoc(l''_i)=\d_i$.

\begin{proposition}
\label{prop: SS formula}
The twisted universal invariant of $(K_o,\rhoc)$ is computed as 
$$Z_{\uDHp}^{\rhoc}(K_o)=F\left(Z_{\uDH}^{\rho}(T)\right)$$
where 
$$F=m^{(4g-1)}\circ P^{\ot g}\circ\left(\bigotimes_{i=1}^{2g}\v_{\c_i}\ot\v_{\d_i}\right)\left(\bigotimes_{i=1}^{2g}  (\id_{\DDP_{\ait}}\ot \wt{S}_{\bitm})\circ \De_{\ait,\bitm}\right).$$

Here $m^{(4g-1)}$ denotes iterated multiplication of $\DDP_{\b_1^{-1}\ot\tt}$ and $\wt{S}$ is as in (\ref{eq: antipode twisted by pivot}).

\end{proposition}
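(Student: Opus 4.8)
The plan is to reconstruct the $(1,1)$-tangle $K_o$ from $T$ by a sequence of elementary moves — doubling each component, reversing the orientation of the right parallel of each component, and composing with left caps on top — and to apply the three lemmas of Subsection~\ref{subs: lemmas} one move at a time. This is the twisted counterpart of Habiro's argument producing (\ref{eq: untwisted Z from SS}); the one genuinely new feature is that the representation $\rhoc$ must be carried along, and the key is to keep track of the crossing isomorphisms $\v_{\b}$ that each move introduces.

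The computation begins from the homological cancellation already observed: since the two parallels of $T_i$ carry opposite classes in $H_1(X_K)$, one has $\rhoc|_{\pi_1(X_T)}=\rho$, so that $Z_{\uDHp}^{\rho}(T)=\ZHrho(T)\in\bigotimes_{i=1}^{2g}\DD_{\a_i\b_i}$ carries no powers of $t$. Viewing this element in $\DDP_{\a_i\b_i}=\DD_{\a_i\b_i}\ot\FF$, I would first apply Lemma~\ref{lemma: DOUBLING} to double each $T_i$. Under $\rhoc$ the two new meridians receive labels $\ait$ and $\bitm$, so doubling inserts the twisted coproduct $\De_{\ait,\bitm}\colon\DDP_{\a_i\b_i}\to\DDP_{\ait}\ot\DDP_{\bitm}$ in the $i$-th slot, producing the invariant of the doubled tangle in $\bigotimes_{i}(\DDP_{\ait}\ot\DDP_{\bitm})$.

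Next I would reverse the orientation of each right parallel. Since both of its endpoints lie on $\p_1 X$ with the endpoint to the right of the start, (\ref{eq: reverse OR antipode twisted by pivot}) applies and replaces the corresponding factor by the composite of the pivot-twisted antipode $\wt{S}_{\bitm}\colon\DDP_{\bitm}\to\DDP_{\b_i^{-1}\ot\tt}$ of (\ref{eq: antipode twisted by pivot}) with an inverse crossing isomorphism. Finally, composing with the left caps and gathering all the beads at the single endpoint of $K_o$ is governed by Lemma~\ref{lemma: COMPOSITION}: each capping multiplies the two beads being joined, and the cumulative effect of all the cappings is the permutation $P^{\ot g}$ followed by the iterated product $m^{(4g-1)}$, which takes values in $\DDP_{\b_1^{-1}\ot\tt}$.

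The delicate point, which I expect to be the main obstacle, is to show that the crossing isomorphisms surviving after the capping are exactly $\bigotimes_{i}\v_{\c_i}\ot\v_{\d_i}$. The guiding principle is that transporting a bead to the endpoint of $K_o$ accumulates the product of all white beads lying after its edge, which equals $\v_{\rho(l_e)}$ for the relevant partial longitude $l_e$. For the left parallel this yields $\v_{\rho(l_i')}=\v_{\c_i}$; for the reversed right parallel, the inverse crossing isomorphism produced by the reversal step combines with the white beads encountered during the capping to produce $\v_{\rho(l_i'')}=\v_{\d_i}$. The relations $\c_i\a_i\c_i^{-1}=\b_1^{-1}$ and $\d_i\b_i^{-1}\d_i^{-1}=\b_1^{-1}$ then guarantee that $\v_{\c_i}$ and $\v_{\d_i}$ carry their factors into the common algebra $\DDP_{\b_1^{-1}\ot\tt}$, so that the iterated multiplication is well defined and $F$ takes the stated form. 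What remains is the routine but careful verification that the labels match at every stage and that the order in which $K_o$ traverses the parallels reproduces the permutation $P^{\ot g}$.
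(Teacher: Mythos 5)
Your proposal is correct and follows essentially the same route as the paper: the paper's own proof simply invokes Lemmas \ref{lemma: COMPOSITION}, \ref{lemma: DOUBLING} and \ref{lemma: REVERSE ORTTN} together with the preceding observation that $\rhoc$ restricts to $\rho$ on $\pi_1(X_T)$, which is exactly the sequence of moves (double each $T_i$, reverse the right parallels, cap off) that you carry out. The bookkeeping of the crossing isomorphisms into $\v_{\c_i}\ot\v_{\d_i}$ that you flag as the delicate point is the only content beyond the lemmas, and your description of how the partial longitudes $l_i'$, $l_i''$ produce it is the intended one.
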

\begin{proof}
This follows immediately from the lemmas of Subsection \ref{subs: lemmas} and the above remarks.
\end{proof}

Since $Z_{\uDH}^{\rho}(T)$ has no powers of $t$, the above formula implies that all powers of $t$ in $Z_{\uDHp}^{\rhoc}(K_o)$ come from the tensor $F$. Note that this tensor only depends on the genus (and the $\a_i,\b_i,\c_i,\d_i\in G$ if $\rho$ is non-trivial). Thus, this already shows that the powers of $t$ are controlled. The next section shows how to keep track of the powers of $t$ coming from $F$.

\medskip
\subsection{Filtration of $\underline{D(H')}$}

In what follows, if $\ovi=(i_1,\dots,i_m), \ovj=(j_1,\dots,j_m)\in\N^m$, we introduce the following notation:
\begin{itemize}
\item $H=\bigoplus_{\ovi\in\N^m} H_{\ovi}$,
    \item $t^{\ovi}=t_1^{i_1}\dots t_m^{i_m}$,
    \item $d(\ovi)=i_1+\dots+i_m$
    \item $I+J=(i_1+j_1,\dots,i_m+j_m)$ and $-\ovi=(-i_1,\dots,-i_m)$,
    \item $I\leq J$ if $i_k\leq j_k$ for each $k=1,\dots,m$,
    \item If $a\in H$ is an homogeneous element, $a\in H_{\ovi}$, we denote $|a|=\ovi$ and $d(a)=d(\ovi)$. Note that $|ab|=|a|+|b|$ and $d(ab)=d(a)+d(b)$ for homogeneous $a,b\in H$. With this notation we have $\tt(a)=t^{|a|}a$.
    \item For every $n\in\N$ we denote $H_n=\oplus_{d(I)=n}H_I$ so that $H=\oplus_{n\in\N}H_n$. 
    \item Elements of $\Auto_{\N^m}(H)$ are denoted by $\a,\b,\c,...$
\end{itemize}

We define a $\kk$-linear $\N$-filtration on each $D(H')_{\a\ot\tt^n}=(H^*\ot H)[t_1^{\pm 1/2},\dots,t_m^{\pm 1/2}]$, where $n\in\Z$, by $$\DDP_{\a\ot\tt^n}[k]:=\bigoplus_{I,J\geq 0 , d(\ovi)+d(\ovj)\leq k}(H^*\ot H_{\ovi})\cdot t^{-\ovj}.$$
We will denote this simply by $D'_{\a\ot\tt^n}[k]$.
In particular, an element of filtration-degree $k$ is a polynomial in $H^*\ot H$ of degree $\leq k$.
We consider higher tensor products $\otimes_{i=1}^nD'_{\a_i\ot\tt}$ with the tensor product filtration.
\medskip

\begin{lemma}
The multiplication of $\DDP_{\at}$ is filtration-preserving.
\end{lemma}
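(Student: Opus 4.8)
The plan is to verify directly that $D'_{\at}[k]\cdot D'_{\at}[l]\sb D'_{\at}[k+l]$, by computing the product of two homogeneous generators and bookkeeping the $\N^m$-degrees. A generator of $D'_{\at}[k]$ has the form $(p\ot a)\,t^{-J_1}$ with $p\in H^*$, $a\in H_{I_1}$ homogeneous, $J_1\geq 0$ and $d(I_1)+d(J_1)\leq k$; I would pair it with a second generator $(q\ot b)\,t^{-J_2}$ with $b\in H_{I_2}$, $J_2\geq 0$ and $d(I_2)+d(J_2)\leq l$.

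First I would expand the product using the multiplication formula of the twisted double applied with the automorphism $\at$ of $H'$ in place of $\a$. The only source of a power of $t$ is the factor $(\at)^{-1}(a_{(3)})$ inside the second pairing: since $(\at)^{-1}(x)=t^{-|x|}\a^{-1}(x)$ on homogeneous $x$, this pairing contributes the scalar $t^{-|a_{(3)}|}$. Because the Hopf structure of $H'$ is the $\FF$-linear extension of that of $H$, neither $\De$ nor $S$ nor the products in $H$ and $H^*$ produce any further powers of $t$. Pulling out the central scalars $t^{-J_1},t^{-J_2}$, each Sweedler term of the product is therefore
$$t^{-(J_1+J_2+|a_{(3)}|)}\,\langle a_{(1)},q_{(3)}\rangle\,\langle S^{-1}\a^{-1}(a_{(3)}),q_{(1)}\rangle\,(p\cdot q_{(2)})\ot(a_{(2)}\cdot b),$$
whose $H$-part $a_{(2)}\cdot b$ is homogeneous of degree $I:=|a_{(2)}|+I_2$ and whose $t$-exponent is $-J$ with $J:=J_1+J_2+|a_{(3)}|$.

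The key step is the degree count, which is where the $\N^m$-grading does all the work. Since $H$ is $\N^m$-graded, every piece $a_{(1)},a_{(2)},a_{(3)}$ of the iterated coproduct of $a\in H_{I_1}$ is homogeneous with $|a_{(1)}|+|a_{(2)}|+|a_{(3)}|=I_1$ and each $|a_{(j)}|\geq 0$. In particular $|a_{(3)}|\geq 0$, so $J\geq 0$ as the filtration demands, and $d(|a_{(2)}|)+d(|a_{(3)}|)\leq d(I_1)$. Hence
$$d(I)+d(J)=\big(d(|a_{(2)}|)+d(|a_{(3)}|)\big)+d(I_2)+d(J_1)+d(J_2)\leq d(I_1)+d(J_1)+d(I_2)+d(J_2)\leq k+l,$$
so the term lies in $D'_{\at}[k+l]$; summing over the Sweedler components then gives the lemma.

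The point to handle with care—rather than a genuine obstacle—is precisely that the power of $t$ manufactured by the multiplication is \emph{non-positive}, equal to $-|a_{(3)}|$, and that the cross term $d(|a_{(2)}|)+d(|a_{(3)}|)$ is controlled by $d(I_1)$. Both rest on the positivity of all coproduct degrees, that is, on $H$ being genuinely $\N^m$-graded rather than merely $\Z^m$-graded; this is also why the filtration is defined using only $t^{-J}$ with $J\geq 0$. Finally I would note that the $H^*$-factor $p\cdot q_{(2)}$ plays no role in the count, since the filtration imposes no constraint on it.
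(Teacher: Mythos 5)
Your proof is correct and follows essentially the same route as the paper's: both isolate the single power $t^{-|a_{(3)}|}$ produced by $(\at)^{-1}(a_{(3)})$ in the multiplication formula and then use the $\N^m$-grading to bound $d(|a_{(2)}|)+d(|a_{(3)}|)$ by the degree of $a$, giving the filtration bound. Your added remarks on why positivity of the coproduct degrees is essential are accurate but do not change the argument.
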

\begin{proof}
 Let $x\in D'_{\at}[n], y\in D'_{\at}[m]$, say $x=p\ot a\cdot t^{-J}$ and $y=q\ot b\cdot t^{-L}$, where $a,b\in H,\ p,q\in H^*$ and $d(a)+d(J)\leq n, d(b)+d(L)\leq m$. Then
\begin{align*}
x\cdot_{\at}y&=t^{-J-L}\lb a_{(1)},q_{(3)}\rb\lb S^{-1}((\at)^{-1}(a_{(3)})), q_{(1)}\rb p\cdot q_{(2)}\ot a_{(2)}\cdot b\\
&=t^{-J-L-|a_{(3)}|}\lb a_{(1)},q_{(3)}\rb\lb S^{-1}(\a^{-1}(a_{(3)})), q_{(1)}\rb p\cdot q_{(2)}\ot a_{(2)}\cdot b\\
&\in \bigoplus(H^*\ot H_{|a_{(2)}b|})t^{-J-L-|a_{(3)}|}.
\end{align*}
Since $H$ is $\N^m$-graded, we have $d(a_{(2)})+d(a_{(3)})\leq d(a)$. Thus $d(a_{(2)}b)+d(J+L+a_{(3)})\leq d(a)+d(b)+d(J)+d(L)\leq n+m$, showing that the above direct sum is contained in $D'_{\at}[n+m]$ as desired. 
\end{proof}

\medskip

In contrast to the above lemma, the maps $\De_{\at,\btm},S_{\btm}$ are not filtration-preserving. However, we have the following:

\begin{lemma}
The map $(\id_{\DDP_{\at}}\ot S_{\btm})\circ\De_{\at,\btm}:\DDP_{\a\b}\to \DDP_{\at}\ot \DDP_{\b^{-1}\ot \tt}$ is filtration-preserving.
\end{lemma}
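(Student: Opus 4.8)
The plan is to test the claim on the spanning elements of the domain filtration and to track powers of $t$ throughout. By $\FF$-linearity of the coproduct and antipode of $\uDHp$, it suffices to take a homogeneous element $p\ot a\cdot t^{-J}\in\DDP_{\a\b}[k]$ (so $p\in H^*$, $a\in H$ homogeneous, $J\in\N^m$ with $d(a)+d(J)\le k$), to pull the scalar $t^{-J}$ through the composite, and to prove that the image of $p\ot a$ lands in filtration degree $\le d(a)$; restoring $t^{-J}$ then adds $d(J)$ and keeps everything $\le k$.

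First I would apply the coproduct. Since the first index of $\De_{\at,\btm}$ is $\at$ and $(\at)^{-1}=t^{-|\cdot|}\a^{-1}$, one gets $\De_{\at,\btm}(p\ot a)=(p_{(2)}\ot a_{(1)})\ot(p_{(1)}\ot\a^{-1}(a_{(2)}))\,t^{-|a_{(2)}|}$. The first factor carries no power of $t$ and sits in filtration degree $d(|a_{(1)}|)$, but the second factor has $H$-degree $|a_{(2)}|$ together with the power $t^{-|a_{(2)}|}$, hence naive filtration degree $2\,d(|a_{(2)}|)$. This overshoot is exactly why $\De_{\at,\btm}$ is not filtration-preserving on its own.

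The key step is that $S_{\btm}$ reverses the overshoot. Because $(\btm)^{-1}=\b^{-1}\ot\tt$, the antipode formula forces a compensating positive power of $t$: setting $b=\a^{-1}(a_{(2)})$ and $\wt b=\b^{-1}(S(b))$ (so $|\wt b|=|b|=|a_{(2)}|$, using that $\a,\b,S$ all preserve the $\N^m$-grading), one expands $S_{\btm}(p_{(1)}\ot b)=(\e\ot(\btm)^{-1}(S(b)))\cdot_{(\btm)^{-1}}(p_{(1)}\circ S^{-1}\ot 1)$ and pulls the power of $t$ out of the evaluation $\lb S^{-1}\c^{-1}(\wt b_{(3)}),\cdot\rb$. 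This shows $S_{\btm}(p_{(1)}\ot b)$ has $H$-degree $|\wt b_{(2)}|$ and $t$-power $t^{\,|a_{(2)}|-|\wt b_{(3)}|}$. Absorbing the prefactor $t^{-|a_{(2)}|}$ coming from the coproduct, the second tensor factor of the composite becomes a term of $H$-degree $|\wt b_{(2)}|$ with power $t^{-|\wt b_{(3)}|}$, which is now non-positive, of filtration degree $d(|\wt b_{(2)}|)+d(|\wt b_{(3)}|)\le d(|\wt b|)=d(|a_{(2)}|)$.

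Adding the two tensor factors in the tensor-product filtration then yields total degree at most $d(|a_{(1)}|)+d(|a_{(2)}|)=d(a)$, as needed. The main obstacle --- and really the entire content of the lemma --- is this cancellation: the negative powers of $t$ produced by $(\at)^{-1}$ inside $\De_{\at,\btm}$ are precisely offset by the positive powers produced by $(\btm)^{-1}=\b^{-1}\ot\tt$ inside $S_{\btm}$. The structural facts that make the bookkeeping close are that $\a$ and $\b$ preserve the grading, that $S(H_I)\subseteq H_I$, and that the coproduct respects the grading, which give the coordinatewise relations $|\wt b_{(2)}|+|\wt b_{(3)}|\le|\wt b|=|a_{(2)}|$ and $|a_{(1)}|+|a_{(2)}|=|a|$.
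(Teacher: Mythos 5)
Your proof is correct and follows essentially the same route as the paper's: apply the coproduct, isolate the overshoot $t^{-|a_{(2)}|}$ on the second tensor factor, and observe that $(\b\ot\tt^{-1})^{-1}=\b^{-1}\ot\tt$ inside the antipode formula produces the cancelling factor $t^{+|a_{(2)}|}$, after which the grading bounds $|a_{(1)}|+|a_{(2)}|=|a|$ and $|\wt b_{(2)}|+|\wt b_{(3)}|\leq |\wt b|$ close the estimate. The only (harmless) difference is that at the last step the paper bounds the product $\b^{-1}S\a^{-1}(a_{(2)})\cdot_{\b^{-1}\ot\tt}(p_{(1)}\circ S^{-1})$ by citing the preceding lemma that multiplication is filtration-preserving, whereas you expand that product explicitly and track the degrees of $\wt b_{(2)}$ and $\wt b_{(3)}$ by hand.
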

\begin{proof}
 Let $x\in D'_{\a\b}[n]$, say, $x=p\ot a\cdot t^{-J}$ with $d(a)+d(J)\leq n$. We have 
\begin{align*}
(\id_{D(H')_{\at}}\ot S_{\btm})&\De_{\at,\btm}(x) \\
&=t^{-J}(\id_{D(H)_{\at}}\ot S_{\btm})(p_{(2)}\ot a_{(1)}\ot p_{(1)}\ot t^{-|a_{(2)}|}\a^{-1}(a_{(2)}))\\
&=t^{-J}p_{(2)}\ot a_{(1)}\ot (t^{-|a_{(2)}|}t^{|\a^{-1}(a_{(2)})|}\b^{-1}S\a^{-1}(a_{(2)})\cdot_{\b^{-1}\ot\theta} p_{(1)}\circ S^{-1})\\
&=(t^{-J}p_{(2)}\ot a_{(1)})\ot( \b^{-1}S\a^{-1}(a_{(2)})\cdot_{\b^{-1}\ot\theta} p_{(1)}\circ S^{-1}).
\end{align*}
Note that in the third equality we used that $\a\in \Aut(H)$ preserves the $\N^m$-degree. By the previous lemma, the rightmost tensor factor of the last equality belongs to $D'_{\b^{-1}\ot\tt}[d(a_{(2)})]$. The left tensor factor clearly belongs to $D'_{\a\ot\tt}[d(a_{(1)})+d(J)]$ so their tensor product belongs to $(D'_{\a\ot\tt}\ot D'_{\b^{-1}\ot\tt})[d(a_{(1)})+d(a_{(2)})+d(J)]$ which is contained in the $n$-th term of the same filtration since $$d(a_{(1)})+d(a_{(2)})+d(J)\leq d(a)+d(J)\leq n.$$

\end{proof}

\subsection{Proof of Theorem \ref{Thm: main thm}} Let $N=d(H)$.
We will prove something more general, namely, that the entire universal invariant $\ZHrhoc(K_o)\in D(H')$ is a polynomial of degree $\leq 2gN$, after identifying $D(H')$ with $(H^*\ot H)[t_1^{\pm 1/2},\dots,t_m^{\pm 1/2}]$. Thus, any evaluation, in particular $P_H^{\rhoc}$, is a polynomial of degree $\leq 2gN$. To begin with, note that since there are no powers of $t$ involved in $Z_{\uDH}^{\rho}(T)$ and $N$ is the top $\N$-degree of $H$, we have $$Z_{\uDH}^{\rho}(T)\in \bigotimes_{i=1}^{2g} (D'_{\a_i\b_i}[N])\sb \left(\bigotimes_{i=1}^{2g}D'_{\a_i\b_i}\right)[2gN].$$ Recall that $\wt{S}_{\b\ot\tt^{-1}}(x)=S_{\b\ot\tt^{-1}}(x)\gg_{\b^{-1}\ot\tt}^{-1}$ and that $\gg_{\b^{-1}\ot\tt}=t^{-|\La_l|/2} \bbeta\ot\bb$. Clearly, multiplication by $\bbeta\ot\bb$ preserves the filtration. Moreover, $\v_{\a}$ is filtration-preserving for any $\a\in G$. Together with the previous lemmas, it follows that the map $$t^{-g|\La_l|}\cdot m^{(4g-1)}\circ P^{\ot g}\circ\left(\bigotimes_{i=1}^{2g}\v_{\c_i}\ot\v_{\d_i}\right)\left(\bigotimes_{i=1}^{2g}  (\id_{D'_{\ait}}\ot \wt{S}_{\bitm})\circ \De_{\ait,\bitm}\right)$$
is filtration-preserving (the $t^{-g|\La_l|}$ factor is simply to eliminate the powers of $t$ from the $G'$-pivots in $\wt{S}$). By Proposition \ref{prop: SS formula}, it follows that $t^{-g|\La_l|}Z_{\uDHp}^{\rhoc}(K_o)\in D'_{\b_1^{-1}\ot\tt}[2gN]$ and by definition of our filtration, this implies that $\ZHrhoc(K_o)$ is a polynomial in $H^*\ot H$ of degree $\leq 2gN$. This proves our theorem. \hfill \qedsymbol

\begin{remark}
The construction of $\ZHrhoc(T)$ only requires $H$ to be $\Z^m$-graded (instead of $\N^m$-graded). Proposition \ref{prop: SS formula} is still valid in this case. Thus, we expect a genus bound to hold in this setting too. We prefer to work in the $\N^m$-graded case since the most relevant examples, Borel parts of quantum groups, are $\N^m$-graded. 
\end{remark}



\def\Uqq{U_{\qq}(\sl_2)}

\section{Relation to ADO invariants}\label{sect: ADO}

\def\wD{D}
\def\sl{\mathfrak{sl}}
\def\ovU{\overline{D}}
\def\uq{\overline{U}_q(\mathfrak{sl}_2)}
\def\UUq{\overline{U}_q(\mathfrak{sl}_2)}
\def\Hh{\mathcal{H}}\def\RR{\mathcal{R}}
\def\k{k}
\def\K{k}
\def\ka{\k^{\a}}\def\kb{\k^{\b}}\def\kc{\k^{\c}}\def\Da{D_{\a}}
\def\Dp{D(p)}
\def\ko{\ov{k}}
\def\Hp{H_p}

Let $H$ be the algebra generated by $\k,E$ such that $\k E=\qq E\k$ (Hopf algebra structure defined below). If $\qq$ is a primitive $2p$-th root of unity (with $p\geq 2$), which we simply take as $\qq=e^{\pi i/p}$, we let $H_p$ be the quotient of $H$ by $\k^{4p}=1, E^p=0$. This is $\N$-graded with $|E|=1, |k|=0.$ During this whole section we denote $\qq^{\a}=e^{\pi i \a/p}$ and $[\a]=\frac{\qq^{\a}-\qq^{-\a}}{\qq-\qq^{-1}}$ for any $\a\in\C$.

\subsection{Quantum $\sl_2$} Consider the algebra $\wD(H)$ with generators $E,F,\k^{\pm 1},\kappa^{\pm 1}$ satisfying 
\begin{align*}
    \k E&=\qq E\k,  & \kappa E&=\qq^{-1}E\kappa, &  \k F&=\qq^{-1}F\k, & \kappa F&=\qq F\kappa, \\
    [E,F]&=\frac{\k^2-\kappa^2}{\qq-\qq^{-1}}, & \k\kappa&=\kappa \k, & \k \k^{-1}&=1=\kappa\kappa^{-1},
\end{align*}
and Hopf algebra structure determined by 
\begin{align*}
     \De(E)&=E\ot \k^2+1\ot E, & \De(F)&=\kappa^{2}\ot F+F\ot 1, & \De(\k^{\pm 1})&=\k^{\pm 1}\ot \k^{\pm 1}, & \De(\kappa^{\pm 1})&=\kappa^{\pm 1}\ot\kappa^{\pm1},\\
     \e(E)&=\e(F)=0, & \e(k)&=\e(\kappa)=1, & S(E)&=-E\k^{-2}, & S(F)&=-\kappa^{-2}F, \\
     S(\k)&=\k^{-1}, & S(\kappa)&=\kappa^{-1}.
\end{align*}

Since $k\kappa$ is a central group-like in $D(H)$, the quotient $D(H)/I$, where $I$ is the ideal generated by $k\kappa-1$, is a Hopf algebra. The usual quantum group $\Uqq$ (at a root of unity) is the subalgebra generated by $E,F,K^{\pm 1}:=k^{\pm 2}$ in this quotient.
\medskip

\subsection{The Drinfeld double of $H_p$} The Drinfeld double $D(H_p)$ is the quotient of $\wD(H)$ by the relations
\begin{align*}
     E^p&=F^p=0, & \k^{4p}&=\kappa^{4p}=1, 
\end{align*}
see \cite{FGST:modular}. For convenience, we will denote by $e,f,\ko,\overline{\kappa}$ the images of $E,F,k,\kappa$ in $D(H_p)$. Let $\Dp$ be the quotient of $D(H_p)$ by the ideal generated by $\ko\overline{\kappa}-1$. We will also denote by $e,f,\ko$ the image in $\Dp$ of the generators of $D(H_p)$.
\medskip

The Hopf algebra $\Dp$ is quasi-triangular with $R$-matrix given by $$R=\frac{1}{4p}\sum_{i,j=0}^{4p-1}\qq^{-ij/2}\ko^i\ot \ko^j\cdot\RR$$
where $\RR=\sum_{m=0}^{p-1}c_me^m\ot f^m$ and $c_m=\frac{(\qq-\qq^{-1})^m}{[m]!}\qq^{m(m-1)/2}$ for each $m\geq 0$. Note that this $R$-matrix is simply the projection in $D(p)$ of the usual $R$-matrix of the double $D(H_p)$. Moreover, $D(p)$ has a ribbon element determined by $\gg=vu$ where $\gg=k^{2p+2}$ and $u$ is the Drinfeld element (for an explicit expression of $v^{-1}$ see \cite{FGST:modular}). Note that if $\bb:=\k^{p+1}$ and $\bbb\in H_p^*$ is defined by $\bbb(k)=\qq^{\frac{p+1}{2}}, \bbb(E)=0$ then $(\bbb,\bb)$ satisfies the hypothesis of the Kauffman-Radford theorem (see \ref{subs: ribbon elements}). The pivot $\gg=k^{2p+2}$ is the projection in $D(p)$ of the pivot $\bbeta\ot\bb\in D(H_p)$.

\medskip

Note that $\Dp$ is the usual braided extension of the restricted quantum group $\uq$ at $q=\qq$ ($\uq$ is the Hopf subalgebra of $D(p)$ generated by $e,f,\ko^2$ and is not braided)


\subsection{The twisted Drinfeld double of $H_p$} Consider the action of $(\C,+)$ on $H_p$ given by $\a\mapsto \phi_{\a}\in\Aut(H_p)$ defined by $\phi_{\a}(k)=k, \phi_{\a}(e)=\qq^{2\a}e$, that is, take $t=\qq^{2\a}$ in the degree twist action (here $\qq^{\a}:=e^{i\pi \a/p}$ for any $\a\in\C$). For each $\a$, let $\Da$ be the quotient of $D(H_p)_{\phi_\a}$ by the ideal generated by $k\kappa-1$, in particular, $D_0$ is the Hopf algebra $\Dp$ defined previously. Then $\Da$ has generators $e,f,\ko$ subject to the same relations as in $D_0$ except for the relation $[e,f]=\frac{\ko^2-\ko^{-2}}{\qq-\qq^{-1}}$ which now becomes
\begin{equation}
\label{eq: twisted bracket}
    [e,f]=\frac{\ko^2-\qq^{-2\a}\ko^{-2}}{\qq-\qq^{-1}}.
\end{equation}

The coproduct $\De_{\a,\b}:D_{\a+\b}\to D_{\a}\ot D_{\b}$ is given by
\begin{equation}
\label{eq: twisted coproduct}
    \De_{\a,\b}(e)=\qq^{-2\a}1\ot e+e\ot \ko^2, \hspace{1cm} \De_{\a,\b}(f)=\ko^{-2}\ot f+f\ot 1.
\end{equation}

These computations are carried out in \cite{Virelizier:Graded-QG} (in a slightly different convention, obtained by applying $\id_{H^*}\ot\a$ on each $D_{\a}$ to our formulas). The $R$-matrix is $$R_{\a,\b}=\frac{1}{4p}\sum_{i,j=0}^{4p-1}\qq^{-ij/2}\ko^i\ot \ko^j\cdot\sum_{m=0}^{p-1}c_m \qq^{2\a m}e^m\ot f^m$$ where $c_m$ is as before. 
This is simply the projection in $D_{\a}\ot D_{\b}$ of the $R$-matrix of $\underline{D(H_p)}$ defined in Subsection \ref{subs: TDDs}.
The (left) cointegral of $H_p$ is $\Lambda_l=\sum_{i=0}^{4p-1}k^ie^{p-1}$ and $\phi_{\a}(\La_l)=\qq^{2\a(p-1)}\La_l$ so that $r_{H_p}(\phi_{\a})=\qq^{2\a(p-1)}$. This has an obvious square root $\sqrt{r_{H_p}(\phi_{\a})}=\qq^{\a(p-1)}$, hence the Hopf group-coalgebra $\{D_{\a}\}_{\a\in\C}$ is $\C$-ribbon with ribbon element $v_{\a}$ which is the image in $D_{\a}$ of
\begin{align*}
\qq^{-\a(p-1)}(\id_{H^*}\ot\a)(v)
\end{align*}
where $v$ is the ribbon element of $D(H_p)$ and the $G$-pivot is  $$\gg_{\a}=\qq^{-\a(p-1)}\ko^{2p+2}.$$ 

\def\ovUH{\ovU^H}

\subsection{Relation to unrolled quantum $\sl_2$} To relate the above twisted Drinfeld double to the usual quantum group $\Uq$ we multiply each $e\in D_{\a}$ by $\qq^{\a}$ and $\ko\in D_{\a}$ by $\qq^{\frac{\a}{2}}$. Let $E=\qq^{\a}e, F=f, \K=\qq^{\frac{\a}{2}}\ko$ (note that there is one $E,\K$ on each $D_{\a}$ but we do not include $\a$ in the notation). In these new generators, relation (\ref{eq: twisted bracket}) becomes $$[E,F]=\frac{\K^2-\K^{-2}}{\qq-\qq^{-1}}$$ and $\K^{4p}=\qq^{2p\a}$ while the other relations among the generators remain unchanged. The coproduct (\ref{eq: twisted coproduct}) becomes $$\De_{\a,\b}(E)=E\ot \K^2+\qq^{-\a}1\ot E, \hspace{1cm} \De_{\a,\b}(F)=\qq^{\a}\K^{-2}\ot F+F\ot 1$$ 

\noindent and the $R$-matrix \begin{align*}R_{\a,\b}&=\frac{1}{4p}\sum_{i,j=0}^{4p-1}\qq^{-ij/2}\ko^i\ot \ko^j\sum_{m=0}^{p-1}c_m\qq^{2\a m}e^m\ot f^m\\
&=\frac{1}{4p}\sum_{i,j=0}^{4p-1}\qq^{-ij/2-\a i/2-\b j/2}\K^i\ot \K^{j}\sum_{m=0}^{p-1}c_m\qq^{\a m}E^m\ot F^m\\
&=\frac{\qq^{\a\b/2}}{4p}\sum_{i,j=0}^{4p-1}\qq^{-\frac{1}{2}(\a+j)(\b+i)}\K^i\ot \K^{j}\sum_{m=0}^{p-1}c_m\qq^{\a m}E^m\ot F^m\end{align*}


\medskip


Now let $A_{\a}=\C[(\C,+)]\ltimes D_{\a}$ be the Hopf group-coalgebra defined in Subsection \ref{subs: ss-product Aa's}. Clearly $\v_{1/2}k^{-1}$ is central in $A_{\a}$, hence we mod out by $\v_{1/2}=k$ and we keep denoting the quotient by $A_{\a}$. For each $\b\in\C$, we denote by $\kb$ the element of $A_{\a}$ corresponding to $\v_{\b/2}$. Thus, $A_{\a}$ is obtained by adding generators $\kb,\b\in \C$ to $D_{\a}$ satisfying 
\begin{align*}
\kb E&=\qq^{\b}E\kb, & \kb F&=\qq^{-\b}F\kb, & \k^{\b+\c}&=\k^{\b}\k^{\c},\\
\De^A_{\a,\b}(\kc)&=\kc\ot \kc, & \k^0&=1_{\a}, & \k^1&=\k.
\end{align*}
Here $E,F,k$ denote the above (rescaled) generators of $D_{\a}$. Then $\uA=\{A_{\a}\}_{\a\in\C}$ is a quasi-triangular Hopf group-coalgebra with trivial crossing and has an $R$-matrix given by $$R^A_{\a,\b}=(1\ot \k^{2\a})R_{\a,\b}$$ where $R_{\a,\b}$ is the above $R$-matrix of $\{D_{\a}\}$.
The ribbon element becomes $v^A_{\a}=v_{\a}\cdot \k^{2\a}$ and the pivot element is the same $\gg_{\a}$ as before, see Subsection \ref{subs: ss-product Aa's}.

\medskip




\def\AJ{\uA^J}

As an algebra $A_{\a}$ is isomorphic to (a ribbon extension of) $\C[(\C,+)]\ltimes U_{\a}$, where $U_{\a}$ is the quotient of $\Uqq$ by $E^p=F^p=0, K^{2p}=\qq^{2p\a}$. However, the coproduct and the $R$-matrix are different. It turns out that the difference is measured by a Drinfeld twist. In our setting (grading by an abelian group, no group action) a {\em Drinfeld twist} in $\uA=\{A_{\a}\}_{\a\in\C}$ is a collection $J=\{J_{\a,\b}\}_{\a,\b\in\C}$ where each $J_{\a,\b}$ is an invertible element of $A_{\a}\ot A_{\b}$ satisfying
\begin{enumerate}
    \item $1_{\a}\ot J_{\b,\c}\cdot (\id_{A_{\a}}\ot \De^A_{\b,\c}(J_{\a,\b+\c}))=J_{\a,\b}\ot 1_{\c}\cdot (\De^A_{\a,\b}\ot\id_{A_{\c}}(J_{\a+\b,\c}))$,
    \item $\e_0\ot\id_{A_{\a}}(J_{0,\a})=1_{\a}=\id_{A_{\a}}\ot \e_0(J_{\a,0})$.
\end{enumerate}
for each $\a,\b,\c\in\C$. Drinfeld twisting a quasi-triangular Hopf group-coalgebra produces a new quasi-triangular Hopf group-coalgebra $\AJ=\{A_{\a}^{J}\}$ where $A_{\a}^J=A_{\a}$ as an algebra for each $\a$, the coproduct is $\De_{\a,\b}^J=J_{\a,\b}\De^A_{\a,\b}J_{\a,\b}^{-1}$ and the $R$-matrix is $$R_{\a,\b}^J=\tau_{\b,\a}(J_{\b,\a})\cdot R^A_{\a,\b}\cdot J_{\a,\b}^{-1}$$
where $\tau_{\b,\a}:A_{\b}\ot A_{\a}\to A_{\a}\ot A_{\b}$ is the usual symmetry (with signs in the super case). 

\begin{lemma}
Let $J_{\a,\b}=1 \ot \ka\in A_{\a}\ot A_{\b}$ for each $\a,\b\in\C$. Then $J=\{J_{\a,\b}\}_{\a,\b\in\C}$ is a (graded) Drinfeld twist on $\uA$.
\end{lemma}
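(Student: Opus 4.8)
The plan is to verify the two defining axioms of a (graded) Drinfeld twist directly, the key point being that every $\kc$ is a degree-zero group-like element and the $\k^{\bullet}$ satisfy the additive relation $\kb\ka=\k^{\a+\b}$. No structural input beyond these two facts is needed.

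First I would dispatch the normalization condition (2). Since $J_{0,\a}=1\ot\k^0=1\ot 1_{\a}$, applying $\e_0\ot\id_{A_{\a}}$ gives $\e_0(1)\,1_{\a}=1_{\a}$. For the other equality, $J_{\a,0}=1\ot\ka$ with $\ka\in A_0$; because $\ka$ is group-like one has $\e_0(\ka)=1$, so $\id_{A_{\a}}\ot\e_0(J_{\a,0})=1\cdot\e_0(\ka)=1_{\a}$. Both normalizations hold.

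The substance is the cocycle condition (1), which I would check by computing both sides in $A_{\a}\ot A_{\b}\ot A_{\c}$, multiplying tensors slot-by-slot. I would use that each $\kc$ is group-like, so $\De^A_{\b,\c}(\ka)=\ka\ot\ka$ and $\De^A_{\a,\b}(1)=1\ot 1$, together with $\kb\ka=\k^{\a+\b}$. On the left-hand side, $\id_{A_{\a}}\ot\De^A_{\b,\c}$ sends $J_{\a,\b+\c}=1\ot\ka$ to $1\ot\ka\ot\ka$, and multiplying by $1_{\a}\ot J_{\b,\c}=1_{\a}\ot 1\ot\kb$ yields $1_{\a}\ot\ka\ot\k^{\a+\b}$. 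On the right-hand side, $\De^A_{\a,\b}\ot\id_{A_{\c}}$ sends $J_{\a+\b,\c}=1\ot\k^{\a+\b}$ to $1\ot 1\ot\k^{\a+\b}$, and multiplying by $J_{\a,\b}\ot 1_{\c}=1\ot\ka\ot 1_{\c}$ again gives $1\ot\ka\ot\k^{\a+\b}$. The two sides coincide, establishing (1).

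The only real obstacle is bookkeeping: one must track which tensor slot lives in which algebra $A_{\c}$ and remember that products of elementary tensors are taken componentwise, so that the $\De^A$-factors land in the correct slots before multiplication. Since all $\kc$ have mod-$2$ degree zero, no Koszul signs intervene, and the same computation is valid verbatim in the super case.
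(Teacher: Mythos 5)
Your proof is correct and is simply the explicit, written-out version of the paper's one-line argument, which likewise reduces everything to the facts that each $\ka$ is group-like and $\kb\ka=\k^{\a+\b}$. Both normalization and cocycle checks are carried out accurately, so nothing further is needed.
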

\begin{proof}
It is clear that $J$ is a Drinfeld twist because the $\ka$'s are group-like. 
\end{proof}

Thus, we obtain a new ribbon Hopf group-coalgebra $\AJ$ whose coproduct is determined by
\begin{align*}
\De_{\a,\b}^J(E)=J_{\a,\b}\De^A_{\a,\b}(E)J_{\a,\b}^{-1}&=E\ot \K^2+\qq^{-\a}1\ot \ka E\k^{-\a}\\
&= E\ot \K^2+1\ot E
\end{align*}
and 
\begin{align*}
\De_{\a,\b}^J(F)=J_{\a,\b}\De^A_{\a,\b}(F)J_{\a,\b}^{-1}&=\qq^{\a}\K^{-2}\ot \ka F\k^{-\a}+F\ot 1\\
&=\K^{-2}\ot F+F\ot 1.
\end{align*}


The antipode of $\AJ$ is $S^J_{\a}(x)=k^{\a}S^A_{\a}(x)k^{-\a}$. The $R$-matrix of $\AJ$ is
\begin{align*}
R_{\a,\b}^J&=(\kb\ot 1)R^A_{\a,\b}(1\ot \k^{-\a})\\
&=(\kb\ot 1)\frac{\qq^{\a\b/2}}{4p}\sum_{i,j=0}^{4p-1}\qq^{-\frac{1}{2}(\a+j)(\b+i)}\K^i\ot \K^{j+2\a}\sum_{m=0}^{p-1}c_m\qq^{\a m}E^m\ot F^m(1\ot \k^{-\a})\\
&=\frac{\qq^{\a\b/2}}{4p}\sum_{i,j=0}^{4p-1}\qq^{-\frac{1}{2}(\a+j)(\b+i)}\K^{i+\b}\ot \K^{j+\a}\sum_{m=0}^{p-1}c_mE^m\ot F^m\\
&=\mathcal{H}_{\a,\b}\cdot\mathcal{R}^J
\end{align*}


where $$\Hh_{\a,\b}=\frac{\qq^{\a\b/2}}{4p}\sum_{i,j=0}^{4p-1}\qq^{-\frac{1}{2}(\a+j)(\b+i)}\K^{i+\b}\ot \K^{j+\a}$$
and $\mathcal{R}^J=\sum_{m=0}^{p-1}c_mE^m\ot F^m.$

\begin{lemma}
The Drinfeld element, ribbon element and pivot of $\AJ$ are the same of $\uA$.
\end{lemma}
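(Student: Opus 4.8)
The plan is to check directly that the Drinfeld element $u^A$, the ribbon element $v^A$, and the pivot $\gg$ of $\uA$ already satisfy the defining identities for the twisted structure $\AJ$. Two facts make everything collapse: the twist $J_{\a,\b}=1\ot\ka$ is assembled from the group-likes $\ka$, and $v^A_{\a}$ is \emph{central} in $A_{\a}$ (this is ribbon axiom (3) for $\uA$, whose crossing is trivial).

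First I would treat the ribbon element. Axioms (3) and (4) hold for $\AJ$ verbatim, since the algebra and the (trivial) crossing are unchanged. For axiom (2) one computes $S^J_{\a}(v^A_{\a})=\ka\,S^A_{\a}(v^A_{\a})\,\k^{-\a}=\ka\,v^A_{-\a}\,\k^{-\a}=v^A_{-\a}$, using $S^A_{\a}(v^A_{\a})=v^A_{-\a}$ and centrality of $v^A_{-\a}$. For the coproduct axiom (1) I would start from the untwisted identity $\De^A_{\a,\b}(v^A_{\a+\b})=(v^A_{\a}\ot v^A_{\b})\,\tau_{\b,\a}(R^A_{\b,\a})\,R^A_{\a,\b}$ (the crossing terms drop out because the group is abelian and the crossing is trivial). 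Conjugating by $J_{\a,\b}=1\ot\ka$ produces the left-hand side $\De^J_{\a,\b}(v^A_{\a+\b})$. On the right, inserting $R^J_{\a,\b}=(\kb\ot 1)R^A_{\a,\b}(1\ot\k^{-\a})$ and the corresponding $\tau_{\b,\a}(R^J_{\b,\a})=(1\ot\ka)\tau_{\b,\a}(R^A_{\b,\a})(\k^{-\b}\ot 1)$, the inner factors $(\k^{-\b}\ot 1)(\kb\ot 1)$ cancel, and the two sides then coincide once one moves the remaining $1\ot\ka$ past $v^A_{\a}\ot v^A_{\b}$ using centrality of $v^A_{\b}$. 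Thus $v^A$ is a $G$-ribbon element for $\AJ$, so the ribbon elements agree.

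Next the Drinfeld element. With trivial crossing the defining formula reads $u^{\bullet}_{\a}=m_{\a}(S^{\bullet}_{-\a}\ot\id)\,\tau_{\a,-\a}(R^{\bullet}_{\a,-\a})$. Writing $R^A_{\a,-\a}=\sum_i x_i\ot y_i$ and substituting $R^J_{\a,-\a}=(\k^{-\a}\ot 1)R^A_{\a,-\a}(1\ot\k^{-\a})$ together with $S^J_{-\a}(z)=\k^{-\a}S^A_{-\a}(z)\ka$, the swap produces $\sum_i y_i\k^{-\a}\ot\k^{-\a}x_i$. Applying $S^J_{-\a}$ to the first leg, and using that $S^A_{-\a}$ is an antihomomorphism carrying the group-like $\k^{-\a}$ to $\ka$, one finds $S^J_{-\a}(y_i\k^{-\a})=S^A_{-\a}(y_i)\ka$; multiplying by the surviving $\k^{-\a}$ from the second leg cancels all powers of $\ka$, leaving $u^J_{\a}=\sum_i S^A_{-\a}(y_i)x_i=u^A_{\a}$.

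Finally, since the pivot is $\gg_{\a}=v_{\a}u_{\a}$ by definition, the two previous steps give $\gg^J_{\a}=v^A_{\a}u^A_{\a}=\gg_{\a}$, so the pivot is unchanged as well. I expect the only real difficulty to be bookkeeping: tracking which copy of the group-like $\ka$ sits in which graded piece $A_{\b}$, and applying $S^A$, $\tau$ and $m_{\a}$ on the correct tensor legs (keeping the super-signs, if present). Once the gradings are recorded, every cancellation is forced by group-likeness of the $\ka$ and by centrality of $v^A$, and no genuinely new computation is required.
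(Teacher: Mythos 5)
Your proposal is correct and its core computation — writing $R^J_{\a,-\a}$ as $R^A_{\a,-\a}$ conjugated by powers of the group-like $\ka$ and cancelling them through $S^J_{-\a}(z)=\k^{-\a}S^A_{-\a}(z)\ka$ to get $u^J_{\a}=u^A_{\a}$ — is essentially identical to the paper's proof, which explicitly checks only the Drinfeld element. You additionally verify the ribbon axioms for $v^A$ in $\AJ$ (using centrality of $v^A_{\a}$ and group-likeness of $\ka$) and deduce the pivot from $\gg_{\a}=v_{\a}u_{\a}$, which correctly fills in the parts the paper leaves to the reader.
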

\begin{proof}
We only check this for the Drinfeld element. Write $R^A_{\a,\b}=\sum s'_{\a}\ot t'_{\b}$ so that $R^J_{\a,\b}=\sum k^{\b}s'_{\a}\ot t'_{\b}k^{-\a}$. Then
\begin{align*}
    u_{\a}^J&=\sum S^J_{-\a}(t'_{-\a}k^{-\a})k^{-\a}s'_{\a}\\
    &=\sum k^{\a}S^J_{-\a}(t'_{\a})k^{-\a}s'_{\a}\\
    &=\sum S_{-\a}(t'_{\a})s'_{\a}=u_{\a}
\end{align*}
where we used the above expression for $S^J$ in the third equality.
\end{proof}

The Hopf group-coalgebra $\{A_{\a}^J\}$ is isomorphic, as a Hopf group-coalgebra, to (a ribbon extension of) $\{\C[(\C,+)]\ltimes U_{\a}\}_{\a\in\C}$. The latter appears in \cite{Ohtsuki:colored} as an example of ``colored" Hopf algebra and is behind the definition of the non-semisimple quantum invariants of knots as we will see below. For the moment, we show the following.


\medskip

\begin{lemma}
\label{lemma: ADO proof - Drinfeld twist invariance}
The $\uA$-valued and the $\AJ$-valued universal invariants of a knot are equal.
\end{lemma}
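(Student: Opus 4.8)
The plan is to show that the Drinfeld twist $J$ alters every structure map of $\uA$ only by multiplication or conjugation by the group-like elements $\ka$, and that these extra group-like factors cancel in pairs as one travels around the single closed component of a knot. First I would record how each ingredient entering the universal invariant is modified. By definition $\De^J_{\a,\b}=J_{\a,\b}\De^A_{\a,\b}J_{\a,\b}^{-1}$ and $R^J_{\a,\b}=\tau_{\b,\a}(J_{\b,\a})\,R^A_{\a,\b}\,J_{\a,\b}^{-1}$, while the computation preceding the lemma gives $S^J_{\a}(x)=\ka S^A_{\a}(x)\ka^{-1}$; moreover, by the previous lemma the pivot and ribbon element are unchanged. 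Since $J_{\a,\b}=1\ot\ka$, every one of these modifications amounts to inserting the group-like beads $\ka^{\pm 1}$ adjacent to the $R^A$-beads at a crossing (and adjacent to the antipode beads at an orientation reversal).

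Next I would compare, diagram by diagram, the $\AJ$-computation of the universal invariant of a knot with the $\uA$-computation. By the previous paragraph the two differ only by a collection of inserted beads $\ka^{\pm 1}$: each crossing produces exactly one $\ka$ and one $\ka^{-1}$, placed on its over- and under-passage respectively (and a negative crossing produces the same with the two roles exchanged). Since all edges of a knot carry the same label $\a$ in the abelian setting, every inserted bead is a power of the single group-like $\ka$.

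The heart of the argument is then to show that these insertions telescope to the identity along the knot. Following the single strand, each crossing is met exactly twice, once as overstrand and once as understrand, so the net exponent of $\ka$ inserted around the component is zero. I would slide these group-like beads along the strand, invoking the conditions $(1)$--$(2)$ defining a Drinfeld twist to guarantee that the contributions assemble consistently, and use that the component is closed so that the accumulated factor may be transported all the way around and unwound. Conceptually this is the statement that $J$, being built from group-likes, defines a ribbon monoidal equivalence $\Rep(\uA)\to\Rep(\AJ)$ that is the identity on objects, and that such an equivalence leaves the invariant of a closed diagram fixed because the constraint maps $J^{\pm 1}$ occur in cancelling pairs around the loop.

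The step I expect to be the main obstacle is precisely this cancellation, because $\ka$ is \emph{not} central: conjugating by it rescales $E$ and $F$ by $\qq^{\pm\a}$, so one cannot merely add exponents and set them to zero. The point to exploit is that the invariant is computed on a single closed loop of constant label, so that the nontrivial conjugations accompanying the $\ka^{\pm 1}$ beads unwind globally rather than locally; making this transport rigorous, rather than appealing to the categorical slogan, is where the real work lies.
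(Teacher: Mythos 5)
You have correctly identified the mechanism --- the twist $J_{\a,\b}=1\ot\ka$ only inserts group-like beads $\ka^{\pm1}$ at the crossings, the pivot and ribbon element are unchanged, and the insertions must cancel around the single closed component --- but your proposal stops exactly where the proof has to happen. As you yourself note, $\ka$ is not central (it rescales $E$ and $F$ by $\qq^{\pm\a}$), so the observation that each crossing contributes one $\ka$ on the overpass and one $\k^{-\a}$ on the underpass, hence ``net exponent zero,'' does not by itself prove anything: sliding a $\ka$ past the other beads conjugates them, and on a general diagram one would also have to track how these insertions interact with the antipode beads on downward-oriented arcs and with the cap/cup beads. Declaring that the conjugations ``unwind globally'' is a restatement of the lemma, not an argument for it.

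The paper closes this gap by choosing a presentation that makes the cancellation a two-line computation: present $K$ as the closure of a braid $\b\in B_n$, so every strand is upward and the only non-crossing data are the pivots, which the previous lemma says are unchanged. Writing $\s_i^J$ and $\s_i$ for the braid operators of $\AJ$ and $\uA$, one checks $\s_i^J=(1\ot\ka)\,\s_i\,(1\ot\k^{-\a})$ on factors $i,i+1$, and since $\s_i$ commutes with $\De(\k^{c})=\k^{c}\ot\k^{c}$ (the $R$-matrix intertwines $\De$ and $\De^{op}$, and $\k$ is group-like), conjugation by the single global element $J=1\ot\ka\ot\k^{2\a}\ot\dots\ot\k^{(n-1)\a}$ gives $J^{-1}\s_i^JJ=\s_i$ for all $i$, hence $J^{-1}\s(\b)^JJ=\s(\b)$. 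Closing the last $n-1$ strands pairs each $\k^{(i-1)\a}$ with $\k^{-(i-1)\a}$, so the conjugation disappears and the two invariants agree. If you want to salvage your diagram-by-diagram version, you would need an analogous explicit conjugating element attached to the edges of an arbitrary diagram together with a verification at caps, cups and reversed strands; the braid-closure route avoids all of that.
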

\begin{proof}
It is a general fact that Drinfeld twisting does not changes the resulting quantum invariants. This is obvious if one understands a Drinfeld twist as an equivalence of monoidal categories. In the present case, we can see this directly as follows: let $\s^J_i:A_{\a}^{\ot n}\to A_{\a}^{\ot n}$ be the braid morphism corresponding to the $i$-th generator of the braid group $B_n$ coming from the $R$-matrix $R_{\a,\a}^J$. Let $\s_i$ be the one for $R^A_{\a,\a}$. Then if $J=1\ot k^{\a}\ot \k^{2\a}\ot\dots\ot k^{\a(n-1)}$ it is easy to see that $J^{-1}\s_i^JJ=\s_i$ for all $i$, hence $J^{-1}\s(\b)^JJ=\s(\b)$ for any braid $\b\in B_n$ where $\s(\b)^J,\s(\b)$ are the braid operators associated to $\AJ,\uA$ respectively. Closing the $i$-th strand in $J^{-1}\s(\b)^JJ$ has the effect of multiplying $k^{(i-1)\a}$ with $k^{-(i-1)\a}$, so closing the last $n-1$ strands is invariant under conjugation by $J$. Since $\AJ$ and $\uA$ have the same pivot, it follows that the invariants are equal.

\end{proof}

\subsection{Verma modules} For each $\a\in\C$, the Verma module $V_{\a}$ is the $\Uqq$-module with basis $v_0,\dots, v_{p-1}$ and action given by 
\begin{align*}
Ev_i&=[\a-i+1]v_{i-1}, & Fv_i&=[i+1]v_{i+1}, & K v_i&=\qq^{\a-2i}v_i,
\end{align*}
see \cite{Murakami:ADO}. This becomes a $D(H)/(k\kappa-1)$-module if we set $kv_i =\qq^{\frac{\a}{2}-i}v_i$. Since $\K^{4p}$ acts by $\qq^{2p\a}$ this is a module over $\Da$. It is a module over $A_{\a}=\C[(\C,+)]\ltimes D_{\a}$ if we set $\kb v_i=\qq^{\b(\a/2-i)}v_i$. The operator $\Hh_{\a,\b}\in A_{\a}\ot A_{\b}$ above acts over a tensor product $V_{\a}\ot V_{\b}$ as follows
\begin{align*}
\Hh_{\a,\b}(v_a\ot v_b)&=\frac{\qq^{\a\b/2}}{4p}\sum_{i,j=0}^{4p-1}\qq^{-\frac{1}{2}(\a+j)(\b+i)}\qq^{(i+\b)(\a/2-a)}\qq^{(j+\a)(\b/2-b)}(v_a\ot v_b)\\
&=\frac{\qq^{\a\b}}{4p}\sum_{i,j=0}^{4p-1}\qq^{-a\b-ai-b\a-bj-ij/2}(v_a\ot v_b)\\
&=\frac{\qq^{\a\b}}{4p}\sum_{i=0}^{4p-1}\qq^{-a\b-ai-b\a}\sum_{j=0}^{4p-1}(\qq^{-b-i/2})^j(v_a\ot v_b)\\
&=\frac{\qq^{\a\b}}{4p}\qq^{-a\b-a(-2b)-b\a}(4p)(v_a\ot v_b)\\
&=\qq^{\a\b-a\b+2ab-b\a}(v_a\ot v_b)\\
&=\qq^{\a\b/2}\qq^{H\ot H/2}(v_a\ot v_b)
\end{align*}

where $\qq^{H\ot H/2}$ is the operator defined by $$\qq^{H\ot H/2}(v_a\ot v_b)=\qq^{(\a-2a)(\b-2b)/2}v_a\ot v_b.$$

\def\ovR{\ov{R}}

Thus, the action of $R^J_{\a,\b}=\Hh_{\a,\b}\cdot\mathcal{R}$ over a tensor product $V_{\a}\ot V_{\b}$ is given by $$R^J_{\a,\b}=\qq^{\a\b/2}\ovR$$

where $\ovR:V_{\a}\ot V_{\b}\to V_{\a}\ot V_{\b}$ acts by $\qq^{\frac{H\ot H}{2}}\RR$.



\subsection{ADO invariants}

\def\ovc{\ov{c}}

We define the ADO invariants following Murakami \cite{Murakami:ADO}. Let $K$ be a framed oriented knot which is the closure of a framed oriented $(1,1)$-tangle $K_o$. We suppose $K_o$ is given by a diagram $D$ with only upward crossings and left/right caps and cups and with the blackboard framing. Consider the above Verma module $V=V_{\a}$ with the Yang-Baxter operator $\ovc:V\ot V\to V\ot V$ induced from $\ovR$ above (that is $\ovc=P\circ \ovR$ where $P$ is the swap $v\ot w\mapsto w\ot v$), this is the same Yang-Baxter operator used in \cite{Murakami:ADO}. Then, to each positive crossing of $D$ we associate $\ovR$ or its inverse if the crossing is negative. On left caps and cups we simply associate the evaluation/coevaluation of vector spaces. On right caps and cups we use $$\lb v_i,v^*_i\rb=\qq^{\a(1-p)-2i}, \ 1\mapsto \sum_{i=0}^{p-1} \qq^{-\a(1-p)-2i}v^*_i\ot v_i.$$
\def\udp{\underline{D(H_p)}}

Pasting all these maps together as determined by the diagram $D$ defines a map $V_{\a}\to V_{\a}$. For generic $\a$, it can be shown that this map is multiplication by a scalar in $\qq^{\a^2w(D)/2}\cdot\Z[\qq,\qq^{\pm\a}]$ where $w(D)$ is the writhe of the diagram. This is called the ADO invariant of the framed knot $K$ and denoted $ADO'_p(K)$ (usually, the ADO invariant is a renormalized version of this that is no longer a polynomial, but since we only consider knots we prefer the unnormalized version given here). It is easily seen that a positive twist acts by multiplication by $\qq^{\a^2/2-\a(p-1)}$ on $V_{\a}$. It follows that
$$\qq^{-w(D)\a^2/2}\qq^{w(D)\a(p-1))}ADO'_p(K)$$
is an invariant of the underlying unframed knot $K$, where $w(D)$ is the writhe of the diagram. Moreover, this invariant belongs to $\Z[\qq,\qq^{\pm\a}]$ so setting $t=\qq^{2\a}$, this determines a polynomial in $\Z[\qq][t^{\pm 1/2}]$ which we denote simply by $ADO_p(K,t)$. Note that some authors use $t=\qq^{\a}$ instead. This polynomial is not symmetric in $t$, but it is after substituting $t=\qq^{-2}x$ \cite{MW:unified}. Recall from (\ref{eq: knot poly abelian case}) that $P_{H_p}^{\tt}$ is defined as $$P_{H_p}^{\tt}(K)=t^{w(D)(p-1)/2}\e_{D(\Hp)}(Z^{\theta}_{\udp}(K_o)).$$

\medskip


\begin{proposition}
We have $ADO_p(K,t)=P_{H_p}^{\tt}(K)$ where $\theta$ is the degree twist of $H_p$.
\end{proposition}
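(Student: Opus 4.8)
The plan is to funnel the counit evaluation defining $P_{H_p}^{\tt}$ through the chain of Hopf group-coalgebras $\udp$, $\uA$, $\AJ$ and then to recognize the $\AJ$-colored construction as Murakami's state sum for $ADO_p$. First I would use (\ref{eq: knot poly abelian case}) to write $P_{H_p}^{\tt}(K)=t^{w(D)(p-1)/2}\e_{D(\Hp)}(Z^{\tt}_{\udp}(K_o))$, specialize $t=\qq^{2\a}$ so that $Z^{\tt}_{\udp}(K_o)$ lands in $D_{\a}\subset A_{\a}$, and pass to the $\uA$-valued invariant via (\ref{eq: ZA fom ZDH}). The correction term $\v_{(\rho\ot\tt)([K_o])}$ appearing there is trivial: the longitude $[K_o]$ of a $0$-framed knot is null-homologous, so $(\rho\ot\tt)([K_o])=\mathrm{id}$ (with $\rho\equiv 1$), and hence $Z_{\uA}^{\tt}(K_o)=Z^{\tt}_{\udp}(K_o)$ as elements of $A_{\a}$. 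Lemma \ref{lemma: ADO proof - Drinfeld twist invariance} then replaces the $\uA$-invariant by the $\AJ$-invariant, and because the Drinfeld twist $J=1\ot k^{\a}$ is group-like it leaves the counit unchanged, giving $\e_{D(\Hp)}(Z^{\tt}_{\udp}(K_o))=\e(Z_{\AJ}^{\tt}(K_o))$.

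Next I would color the single strand of $K_o$ by the Verma module $V_{\a}$, using that $\AJ\cong\C[(\C,+)]\ltimes U_{\a}$ is the colored Hopf algebra underlying Murakami's definition \cite{Ohtsuki:colored, Murakami:ADO}. Applying the representation $V_{\a}$ turns every ingredient of the $\AJ$-invariant into the corresponding Murakami operator: by the computation preceding this proposition the braiding $R^J_{\a,\a}$ acts as $\qq^{\a^2/2}\ovR$, and one checks that the $\AJ$-pivot $\qq^{-\a(p-1)}k^{2p+2}$ reproduces exactly the right cap/cup pairings $\lb v_i,v_i^*\rb=\qq^{\a(1-p)-2i}$ used there. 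Since $V_{\a}$ is irreducible for generic $\a$ and $Z_{\AJ}^{\tt}(K_o)$ is the universal invariant of a $(1,1)$-tangle, it acts on $V_{\a}$ as a scalar, and that scalar equals $ADO'_p(K)$ up to the framing factors $\qq^{\pm\a^2/2}$ contributed at each crossing by the discrepancy between $R^J$ and $\ovR$, which assemble into a writhe-dependent power $\qq^{\a^2 w(D)/2}$. It is essential to keep $K_o$ open here, because $\dim_q V_{\a}=0$ forces the naive closure to vanish; this is also why both constructions are phrased through $(1,1)$-tangles.

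The remaining and most delicate point is to match this representation-theoretic scalar with the counit evaluation $\e(Z_{\AJ}^{\tt}(K_o))$. I would extract the scalar by pairing with the highest weight vector, $\lb v_0^*,\pi_{V_{\a}}(Z_{\AJ}^{\tt}(K_o))v_0\rb$, and compare this matrix coefficient with $\e$. The two functionals differ only through the $k$-grading (the vector $v_0$ carries $k$-eigenvalue $\qq^{\a/2}$, whereas $\e(k)=1$) and through the pivot normalizations $\qq^{-\a(p-1)}$ sitting on the caps and cups. On the universal invariant of a $(1,1)$-tangle this discrepancy depends only on the framing, since altering the framing multiplies the invariant by the central twist element, whose counit and highest-weight eigenvalue $\qq^{\a^2/2-\a(p-1)}$ are explicit. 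Tracking these $\qq^{\a}$-powers together with the prefactor $t^{w(D)(p-1)/2}=\qq^{\a(p-1)w(D)}$ in $P_{H_p}^{\tt}$ and the normalization $\qq^{-w(D)\a^2/2+w(D)\a(p-1)}$ built into $ADO_p$ should reconcile the two sides and yield $P_{H_p}^{\tt}(K)=ADO_p(K,t)$ after the substitution $t=\qq^{2\a}$.

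I expect the main obstacle to be precisely this last bookkeeping of scalar prefactors: isolating how each positive and negative crossing, cap, cup and curl contributes powers of $\qq^{\a}$ and $\qq^{\a^2}$ to $\e$ as opposed to the Verma evaluation, and verifying that the framing normalizations of $P_{H_p}^{\tt}$ and of $ADO_p$ differ by exactly the factor relating the counit to the highest-weight matrix coefficient. The conceptual ingredients (Drinfeld twist invariance, the identification of $R^J$ with $\qq^{\a^2/2}\ovR$ on Verma modules, and the recognition of $\AJ$ as the colored Hopf algebra behind Murakami's construction) are already in place; the work is in confirming that all of these normalizations conspire into the single clean identity.
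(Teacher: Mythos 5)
Your overall route is the same as the paper's: push the counit evaluation through $\udp\to\uA\to\AJ$, invoke Drinfeld twist invariance, identify $R^J_{\a,\a}$ acting on $V_\a\ot V_\a$ with $\qq^{\a^2/2}\ovR$ and the $\AJ$-pivot with Murakami's right cap/cup pairings, and finally compare $\e$ with the highest-weight matrix coefficient $\lb v_0^*,\,\cdot\,v_0\rb$ (the paper does this by writing $Z^{\tt}_{\udp}(K_o)=\sum\la_{ijl}k^i\kappa^jF^lE^l$ and noting that $E^lv_0=0$ for $l>0$ kills exactly the terms killed by $\e(E)=\e(F)=0$, while $\e(k^{i-j})=\qq^{\a(i-j)/2}$ equals the $v_0$-eigenvalue of $k^{i-j}$, so there is in fact no residual discrepancy there).

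There is, however, one genuine error in your argument: the claim that the correction term $\v_{(\rho\ot\tt)([K_o])}$ in (\ref{eq: ZA fom ZDH}) is trivial. The element $[K_o]$ is not the Seifert-framed (null-homologous) longitude; it is the partial longitude determined by the blackboard framing of the diagram $D$, whose class in $H_1(X_K)\cong\Z$ is $w(D)$. Hence $(\rho\ot\tt)([K_o])=\tt^{w(D)}$, and after specializing $t=\qq^{2\a}$ the correction is $k^{2\a w(D)}\in A_\a$, so that $Z_\a=\pi\bigl(Z^{\tt}_{\udp}(K_o)|_{t=\qq^{2\a}}\bigr)\cdot k^{2\a w(D)}$. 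This factor is not cosmetic: acting on $v_0$ it contributes $\qq^{w(D)\a^2}$, which is precisely what cancels the $\qq^{-w(D)\a^2}$ in the ADO framing normalization $\qq^{w(D)(-\a^2+\a(p-1))}$ (after replacing $\ovc$ by $c^J_{\a,\a}$), leaving exactly the prefactor $t^{w(D)(p-1)/2}$ appearing in the definition of $P_{H_p}^{\tt}$. Dropping it while simultaneously keeping $w(D)$ arbitrary, as your last two paragraphs do, leaves your two sides differing by $\qq^{w(D)\a^2}$. The gap is fixable either by retaining the $k^{2\a w(D)}$ factor and tracking it as above, or by fixing once and for all a writhe-zero diagram of $K$ (legitimate since both sides are invariants of the unframed knot), in which case all the writhe-dependent normalizations you are worried about disappear simultaneously.
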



\begin{proof}
In the above definition of ADO invariants, we used the Yang-Baxter operator $\ovc=\qq^{-\a^2/2}c^J_{\a,\a}$. Since the $\qq^{-\a^2/2}$ factor also appears on the ribbon element (equivalently, on the action given by a positive twist) it does not affects the invariant of an unframed knot, so we could equally use $c^J_{\a,\a}$ to define ADO. Using $c^J_{\a,\a}$, a positive twist acts by multiplication by $\qq^{\a^2-\a(p-1)}$. The maps associated to right caps and cups above are exactly given by multiplication by $\gg_{\a}^{\pm 1}$, which is the pivot of $\AJ$. Thus, the universal invariant $Z^J_{\a}\in A_{\a}$ defined from $\AJ$ determines the ADO invariant by $$ADO_p(K,\qq^{\a})v_0=\qq^{w(D)(-\a^2+\a(p-1))}Z_{\a}^Jv_0=\qq^{w(D)(-\a^2+\a(p-1))}Z_{\a}v_0$$

\noindent where the second equality is Lemma \ref{lemma: ADO proof - Drinfeld twist invariance}. Now, by (\ref{eq: ZA fom ZDH}), $Z_{\a}$ is of the form $$Z_{\a}=\pi(Z^{\theta}_{\udp}|_{t=\qq^{2\a}})\cdot k^{2\a w(D)}$$  where $\pi:D(H_p)_{\a}\to D_{\a}$ is the projection (recall that we mod out by $k\kappa-1$). Note that $Z_{\udp}^{\tt}$ is evaluated at $t=\qq^{2\a}$ since $A_{\a}$ is defined through the action $\phi_{\a}(e)=\qq^{2\a}e$, that is, $t=\qq^{2\a}$ on the degree twist action. Write $Z^{\theta}_{\udp}(K)=\sum \la_{ijl}k^i\kappa^jF^lE^l$ (rescaled generators of $D(H_p)_{\a}$), where $\la_{ijl}\in\C[t^{\pm 1}]$. Using $E^iv_0=0$ ($i>0$) we get
\begin{align*}
    Z_{\a}v_0=\sum_{i,j,l} \la_{ijl} k^{i-j}F^lE^lk^{2\a w(D)}v_0=\qq^{w(D)\a^2}\sum_{i,j} \la_{ij0}\qq^{\a(i-j)/2}v_0.
\end{align*}
But $k=\qq^{\a/2}\ov{k}$ and $\e(\ov{k})=1$, so $\e(k^{i-j})=\qq^{\a(i-j)/2}$. Since also $\e(E)=\e(F)=0$, it follows that $\e(Z^{\tt}_{\udp}(K))|_{t=\qq^{2\a}}v_0=Z_{\a}v_0$. Setting $\qq^{2\a}=t$ this implies the theorem.

\end{proof}




\bibliographystyle{amsplain}
\bibliography{/Users/daniel/Desktop/TEX/bib/referencesabr}

\end{document}